\def\?[#1]{\textbf{[#1]}\marginpar{\Large{\textbf{??}}}}
\def\smallsection#1{\smallskip\noindent\textbf{#1}.}
\newtheorem{theorem}{Theorem}
\newtheorem*{theorem*}{Theorem}
\newtheorem{prop}{Proposition}[section]
\newtheorem{defi}[prop]{Definition}
\newtheorem{lemma}[prop]{Lemma}
\newtheorem{rem}{Remark}[section]
\newtheorem{ex}{Example}[section]
\numberwithin{equation}{section}
\DeclareMathOperator{\Op}{Op}
\DeclareMathOperator{\supp}{supp}
\newcommand*{\dd}{\mathop{}\!\mathrm{d}}
\DeclareMathOperator{\dist}{dist}
\DeclareMathOperator{\Tr}{Tr}
\newcommand{\ip}[2]{\left \langle#1,#2 \right \rangle}
\newcommand{\mat}[1]{\begin{pmatrix} #1 \end{pmatrix}}
\newcommand{\set}[1]{ \left \{ #1 \right \}}
\newcommand{\abs}[1]{\left | #1 \right| }
\newcommand{\N}{\mathbb{N}}
\newcommand{\p}{\partial}
\newcommand{\dbar}{\overline{\p}}
\newcommand{\Z}{\mathbb{Z}}
\newcommand{\T}{\mathbb{T}}
\newcommand{\e}{\varepsilon}
\newcommand{\R}{\mathbb{R}}
\newcommand{\norm}[1]{ \left \| #1 \right \| }
\newcommand{\C}{\mathbb{C}}
\renewcommand{\phi}{\varphi}
\renewcommand{\Re}[1]{{\rm{Re}} \left ( #1\right ) }
\renewcommand{\Im}[1]{{\rm{Im}} \left ( #1 \right ) }
\title{An Exotic Calculus of Berezin--Toeplitz Operators}
\author{Izak Oltman} 
\address[Izak Oltman]{Department of Mathematics, Northwestern University, 2033 Sheridan Rd, Evanston, IL 60208}
\email{ioltman@northwestern.edu}
\date{\today}
\begin{document}

\setcounter{tocdepth}{1}

\begin{abstract}
We develop a calculus of Berezin-Toeplitz operators quantizing exotic classes of smooth functions on compact K\"ahler manifolds and acting on holomorphic sections of powers of positive line bundles. These functions (classical observables) are exotic in the sense that their derivatives are allowed to grow in ways controlled by local geometry and the power of the line bundle. The properties of this quantization are obtained via careful analysis of the kernels of the operators using Melin and Sj\"ostrand's method of complex stationary phase. We obtain a functional calculus result, a trace formula, and a parametrix construction for this larger class of functions. These results are crucially used in proving a probabilistic Weyl-law for randomly perturbed (standard) Berezin-Toeplitz operators in \cite{Oltman}.
\end{abstract}
\maketitle

\section{Introduction}\label{section:intro}

Let $(X,\omega)$ be a compact, connected, $d$-dimensional K\"{a}hler manifold with a holomorhic line bundle $L$ admitting a positively curved Hermitian metric locally given by $h_\phi = e^{-\phi}$. We assume the Hermitian metric is related to the globally defined symplectic form $\omega$ by the equation $i \p \dbar \phi = \omega$. We call K\"ahler manifolds with these properties quantizable. For further exposition, see Le Floch's presentation in \cite[Chapter 4]{Floch}.

Fixing local trivializations, $\phi$ is locally a strictly plurisubharmonic smooth function. Over each fiber $L_x$ (for $x\in X$), the Hermitian inner-product is written $h_\phi(u,v) = e^{-\phi(x)}u \bar v $ for $u,v\in L_x$. The symplectic form $\omega$ defines a volume form $\mu = \omega^{\land d} /d!$, the Liouville volume form, on $X$. Let $L^N$ denote the $N^{th}$ tensor power of $L$ with Hermitian metric $h^N$. This produces an $L^2$ structure on smooth sections of $L^N$: 
\begin{align}
\ip{u}{v}_{N\phi} := \int _X h_{N\phi} (u,v) \dd \mu , \ \ \ u, v \in C^\infty( X, L^N ) .
\end{align}
We define $L^2 (X,L^N)$ as the completion of $ C^\infty( X, L^N )$ in this  $L^2$ norm. The Bergman projector, denoted by $\Pi_N$, is defined as the orthogonal projection from $L^2 (X,L^N)$ to the subspace of holomorphic sections, denoted by $H^0(X,L^N)$. For any $f\in C^\infty(X;\C)$, we denote by $T_{N,f}$ the Berezin-Toeplitz quantization of $f$ (from now on called Toeplitz operator associated to $f$) which is defined for each $N \in \N$ by
\begin{align}
T_{N,f} (u) := \Pi_N(fu), \ \  u\in H^0 (X,L^N),  \ \ \ T_{N, f } : H^0 (X,L^N) \to H^0 (X,L^N).
\end{align}
Because $H^0(M,L^N)$ is finite dimensional (see \cite[Theorem 4.2.4]{Floch}), $T_{N,f}$ are finite rank operators mapping $H^0 (X,L^N)$ to itself. We now let $f$ depend on $N$, and allow its derivatives to grow in $N$ similarly to the classes $ S_\delta(1) $ in the case of quantization on $ \R^d \times \R^d $ (see \cite[\S 4.4]{Zworski}):
\begin{align}
f \in S_\delta ( 1 )  \ \iff \  \partial^\alpha f = \mathcal O_\alpha ( N^{\delta|\alpha| } ), && \alpha \in \N^{2d}.
\end{align}
In the K\"ahler setting we consider differentiation on a fixed finite set of coordinate patches (see Definition \ref{def:symbol class}). As in the quantization on $ \mathbb R^d \times \mathbb R^d $ we need an additional flexibility of allowing order functions in our symbol classes. This is crucial for our main applications in \cite{Oltman}. The order functions, $m$,  are defined on $ \delta $ scales by demanding that for all $ x, y \in X $, 
\begin{align}
m ( x ) \leq C m ( y )( 1 + N^\delta \dist ( x, y ) )^{M_0} 
\end{align}
for some constants $ C, M_0 > 0 $, and $\delta\in [0,1/2)$. Then we define $ f \in S_\delta ( m ) $ if and only if for each $\alpha \in \N^{2d}$, $\partial^\alpha f = \mathcal O ( N^{\delta |\alpha|} m  )$ on each coordinate patch.

This paper develops a calculus of Toeplitz operators quantizing functions belonging to these more exotic symbol classes. A rough formulation is given as follows.

\noindent
\textbf{Main Result.}
\emph{Suppose that $\delta \in [0,1/2)$, $m_1,m_2$ are $\delta$-order functions on a quantizable K\"ahler manifold $X$, $f \in S_\delta (m_1)$, and $ g\in S_\delta (m_2)$ (see \S \ref{section:symbol class} for definitions). Then
\begin{enumerate}
\item The Schwartz kernels of $T_{N,f}$ and $T_{N,g}$ admit asymptotic expansions. \label{mainresult1}  
\item There exists $h \in S_\delta (m_1m_2)$ such that $T_{N,f} \circ T_{N,g} = T_{N,h} + \mathcal{O} (N^{-\infty})$. \label{mainresult2}
\end{enumerate}}

The precise statements for \ref{mainresult1} and \ref{mainresult2} are given in Theorem \ref{thm.asmptotic} and Theorem \ref{Theorem:composition } respectively. Applications to functional calculus are given in Theorem \ref{thm:functional calculus} and to trace formulas in Theorem  \ref{thm:trace formula}. Coefficients in the expansion of $h$ are given in Appendix \ref{Appendix:1}. More details are also provided at the end of this section.

To put these results in context, we recall that Berezin introduced the concept of Toeplitz operators in \cite{Berezin} to quantize smooth functions (classical observables) on smooth compact symplectic manifolds (classical phase spaces). This generalizes a more straightforward quantization of functions on $ \mathbb T^{2d} := (\mathbb R /2 \pi  \mathbb Z)^{2d} \times \mathbb( \mathbb \R / 2 \pi \mathbb Z)^{2d} $. When $d = 1$, we can think of the coordinate on the first circle, $x\in \T$, as the position variable, and the coordinate on the second circle, $\xi \in \T$, as the momentum variable. Functions $ F = F ( x ) $ and $ G = G ( \xi ) $ are quantized as
\[ \Op_N ( F ) := {\rm{diag}} \left (  F ( 2 \pi j/ N )_{j=0}^{N-1} \right ) , \ \ \ \Op_N ( G ) := \mathcal F_N^*  {\rm{diag}} \left (  G ( 2 \pi j/ N )_{j=0}^{N-1} \right ) \mathcal F_N , \]
where $ \mathcal F_N $ is the unitary discrete Fourier transform on $ \ell^2 ( \mathbb Z_N ) $. This can be generalized to arbitrary functions $ f \in C^\infty ( \mathbb T^2 ) $.
If we consider $ \mathbb T^2 $ as a complex curve and take as $ L$ the theta bundle over it,
one can show that $ \Op_N ( f ) = T_{N,f } + \mathcal O ( N^{-\infty } ) $ (see for instance \cite{Rouby}).
This quantization was used by Vogel in \cite{Vogel} which motivated our main application in \cite{Oltman}. We should also mention that discretizations used in some numerical schemes correspond to Toeplitz quantization on tori (see for instance \cite{Borns}). The analogue of our main result above in the setting of tori can be obtained by using methods already available for the standard quantization in $ \mathbb R^d $ (see \cite{Christiansen}). 

Berezin-Toeplitz quantization of functions on tori has been used as a discrete model of quantum mechanics in both mathematics and physics literature, see for instance \cite{Yonah2022} for a recent application and for pointers in the literature. The physical (rather than purely mathematical) motivation for considering general K\"ahler manifolds with positive line bundles is less clear. We mention however that Anderson in \cite{Andersen2006} and March\'e and Paul in  \cite{Marche2015} studied Toeplitz operators in the context of topological quantum field theory. Deleporte \cite{Deleporte} also used Toeplitz operators to model spin systems in the large spin limit. In \cite{Douglas}, Douglas and Klevstov derived the Bergman projector parametrix for large $N$, which is central to the properties of Toeplitz quantization, using path integrals for particles in a magnetic field.

An essential ingredient needed for this paper is the asymptotic expansion of the kernel of the Bergman projector $\Pi_N$. It was provided by Catlin \cite{Catlin} and Zelditch \cite{Zeldtich} using the Bergman-Szeg\"o kernel parametrix for strictly pseudoconvex domains obtained by Boutet de Monvel and Sj\"ostrand  \cite{Boutet} and extended earlier work by Fefferman \cite{Fefferman}. A direct approach to produce a Bergman kernel expansion for powers of positive line bundles was given by Berman, Berndtsson, and Sj\"ostrand  \cite{BBS}, and another direct approach without relying on the Kuranishi trick was provided by Hitrik and Stone  \cite{Hitrik}.

The main motivation comes from \cite{Oltman} which is devoted to deriving probabilistic Weyl-laws for random perturbations of Toeplitz operators, generalizing the work of Vogel \cite{Vogel} who considered the case of tori. The specific need for developing quantization of exotic classes of functions is the need to study  $\chi ( N^{2 \delta} T_{N, g} )$ where $\chi$ is a smooth cut-off function, $g\in C^\infty (X;\C)$, and $\delta \in [0,1/2)$. This requires a composition formula for Toeplitz operators, which must be established for functions of the form $N^{2\delta} g$. To develop this calculus, the kernel of $N^{2\delta} T_{N,g}$ is explicitly written using the Bergman kernel approximation from \cite{BBS}. The resulting integral is then approximated by Melin and Sj\"ostrand's method of complex stationary phase from \cite{Melin}.

Similar exotic calculi have proven themselves useful in PDE problems from mathematical physics, for instance, long time Egorov theorem, resonance counting, or resolvent estimates. We expect other applications in addition to the motivation coming from \cite{Oltman}.

Composition results for Toeplitz operators of uniformly (in $N$) smooth functions are now standard. They are discussed by Le Floch in \cite{Floch}, Charles in \cite{Charles}, Deleporte in \cite{Deleporte}, Ma and Marinescu in \cite{Marinescu}, and references given there. In Appendix \ref{Appendix:1}, we present a direct computation of the second term in the composition formula, proving the classical-quantum correspondence for Toeplitz operators.

We now state a more precise version of the main results presented in the language of this paper.

\smallsection{Summary of Main Results} For the following four theorems we assume $\delta \in [0,1/2)$ is fixed, $m_1$ and $m_2$ are two $\delta-$order functions on a quantizable $d$-dimensional K\"ahler manifold $X$, $f\in S_\delta (m_1)$, and $g\in S_\delta (m_2)$ (see Definition \ref{def:symbol class}).

\begin{theorem*}[\textbf{Composition Formula}]
There exists $h\in S_\delta (m_1m_2)$ such that
\begin{align}
\norm{T_{N,h} - T_{N,f} \circ T_{N,g}}_{L^2(X,L^N) \to L^2(X,L^N)} = \mathcal{O}(N^{-\infty}). \label{eq:survey 1}
\end{align}
Moreover $h$ {can be locally written as
\begin{equation}\label{eq:survey 3}
\begin{split}
h(x) = f(x)g(x)   - {N^{-1}} \sum _{j,k = 1}^d (\p\dbar \phi(x))^{j,k}&\p_k f(x) \dbar _j g(x) \\
&+ \mathcal{O}(N^{-2(1-2\delta)} m_1(x)m_2(x)) \end{split}
\end{equation}}
where $(\p \dbar \phi(x))^{j,k}$ is such that $\sum _k (\p\dbar \phi(x) )^{j,k} (\p_k \dbar _\ell  \phi(x)) = \delta _{j,\ell}$ for $j,\ell = 1,\dots ,d$.
\end{theorem*}
\begin{theorem*}[\textbf{Trace Formula}]
If $f \sim \sum N^{-(1-2\delta)j}f_j$, then
\begin{align}
\Tr(T_{N,f}) = \left ( \frac{N}{2\pi}\right ) ^d \int_X f_0 (x) \dd \mu(x) + \left ( \int_X m_1(x) \dd \mu(x)\right ) \mathcal{O}(N^{d - (1-2\delta) }). \label{eq:survey 4}
\end{align}
\end{theorem*}

\begin{theorem*}[\textbf{Existence of a Parametrix}]

Suppose $m_1 \ge 1$ and there exists $C> 0$ such that $|f(x)  | > C m_1(x)$ for all $x\in X$. Then there exists $p\in S_\delta (m_1^{-1})$ such that
\begin{align}
T_{N,f} \circ T_{N,p} + \mathcal{O} (N^{-\infty}) = T_{N,p}\circ T_{N,f} + \mathcal{O}(N^{-\infty}) = 1. \label{eq:survey 5}
\end{align}
\end{theorem*}

\begin{theorem*}[\textbf{Functional Calculus}]
Suppose for $x\in X$, $f(x) \in \R_{\ge 0 }$ and there exists $C> 0$ such that $|f(x)|\ge C^{-1} m_1(x) - C$. Then for each $\chi \in C_0^\infty(\R; \C)$, there exists $q\in S_\delta (m_1^{-1})$ such that
\begin{align}
\chi (T_{N,f} ) = T_{N,q} + \mathcal{O}(N^{-\infty})\label{eq:survery 6}
\end{align}
and {$q = \chi (f_0) + O(N^{1-2\delta}m_1^{-1})$ }(i.e. the principal symbol\footnote{If $q\sim \sum_{j=0}^{\infty} N^{-(1-2\delta)}q_j$ (see Definition \ref{def:asymptotic expansion of symbols}), then the principal symbol of $q$ is defined as $q_0$ which is unique modulo $\mathcal O( N^{-(1-2\delta)j}m)$.} of $q$ is $\chi(f_0)$) where $f_0$ is the principal symbol of $f$.
\end{theorem*}

Equation \eqref{eq:survey 1} is proven in Theorem \ref{Theorem:composition }, the first term of the right-hand side of \eqref{eq:survey 3} is proven in Theorem \ref{thm.asmptotic}, the second term of the right-hand side of \eqref{eq:survey 3} is proven in Theorem \ref{thm:second_term}, equation \eqref{eq:survey 4} is proven in Theorem \ref{thm:trace formula}, equation \eqref{eq:survey 5} is proven in Theorem \ref{thm:parametrix construction}, and equation \eqref{eq:survery 6} is proven in Theorem \ref{thm:functional calculus}. 

There are two main difficulties in applying  Melin and Sj\"ostrand's method of complex stationary to our case. First, the amplitude in our integrals is unbounded in $N$, and so any almost analytic extension will also be unbounded in $N$. This growth is carefully controlled by the Gaussian decay of the phase. Second, the critical point of the almost analytically extended phase, which is already an almost analytic extension, must be estimated for proper control of terms in the stationary phase expansion.

\smallsection{Outline of Paper} In Section \ref{section:symbol class} the symbol class $S_{\delta} (m)$ is defined. In Section \ref{section:composition}, the method of complex stationary phase is used to approximate the kernel of $T_{N,f}$ for $f\in S_\delta (m)$. Using the stationary phase expansion, a composition formula is proved for operators in this exotic symbol class. This proof occupies the majority of this paper and becomes fairly technical. However we felt it worth writing the details of how complex stationary phase is applied. For an application of this method in a simpler setting see Appendix \ref{section:AppendixB}. Lastly, in Section \ref{section:application}, this composition formula is used to prove a parametrix construction, a functional calculus, and a trace formula. In Appendix \ref{Appendix:1}, a direct computation of the second term of the composition of Toeplitz operators is presented.

\smallsection{Notation} In this paper we use the following notation. For a function $f$ depending on $N \in \N $ and for a fixed $M \in \R$, we write $f = \mathcal{O} (N^{-M})$ if there exists a constant $C > 0$ such that $|f| \le C N^{-M}$ for all $N$. We write $f = \mathcal{O} (N^{-\infty})$ if $f = \mathcal{O} (N^{-M})$ for all $M \in \N$. We write $f = \mathcal{O} _\alpha (N^{-M})$ if the constant $C$ depends on some parameter $\alpha$. For non-negative functions $f$ and $g$, we write $f\lesssim g$ if there exists a constant $C>0$ such that $f \le C g$. We similarly write $f\lesssim _\alpha g$ if the constant $C$ depends on a parameter $\alpha$. We use the standard multi-index notation with the following twist: if $\alpha \in \N^{2d}$ and $f\in C^\infty(\C^d ; \C)$, then $\p^\alpha_{x,\bar x}f (x):=\left ( \prod _{j,k=1}^{d}\p^{\alpha_j}_{x_j} \p^{\alpha_{k+d}}_{\bar x_k} f \right ) (x)$, where $\p_x : = \frac{1}{2} (\p_{\Re{x}} - i \p_{\Im{x}})$ and $\p_{\bar x} : = \frac{1}{2} (\p_{\Re{x}} + i \p _{\Im{x}})$ are the holomorphic and anti-holomorphic derivative operators respectively.

\section{A new symbol class}\label{section:symbol class}

For the remainder of this paper, we fix a finite atlas of neighborhoods $(U_i , \rho_i ) _{i \in \mathcal{I}}$ for a fixed K\"{a}hler manifold $X $ as described in the introduction (a compact, quantizable, $d$-dimensional K\"ahler manifold).
Without loss of generality, we assume $\rho_i (U_i)  = \C^d$.

\begin{defi}[\textbf{$\bm{\delta}$-order Function on $\bm{X}$}]\label{def:order function}
For $\delta \in [0,1/2)$, a $\delta$-order function on $X$ is a function $m\in C^\infty (X;\R_{> 0})$, depending on $N$, such that there exist $C,M_0 >0$ so that for all $x,y\in X$ 
\begin{align}
m(x) \le C m(y) \left (1 + N^\delta \dist (x,y) \right )^{M_0} \label{order function}
\end{align}
where $\dist(x,y)$ is the distance between $x$ and $y$ with respect to the Riemannian metric on $X$ induced by the symplectic form $\omega$. 
\end{defi}

This paper will also use $\delta$-order functions on $\R^d$ and $\C^d$, which are defined below.

\begin{defi}[\textbf{$\bm{\delta}$-order Function on $\R^d$ and $\C^d$}]
For $\delta \in [0,1/2)$, a $\delta$-order function on $\R^d$ (or $\C^d$) is a function $m\in C^\infty (\R^d;\R_{> 0})$ (or $C^\infty(\C^d ; \R_{>0})$, depending on $N$, such that there exist $C,M_0 >0$ so that for all $x,y\in \R^d$ (or $\C^d$) 
\begin{align}
m(x) \le C m(y) \left (1 + N^\delta |x-y| \right )^{M_0} 
\end{align}
\end{defi}

\begin{ex}
If $f \in C^\infty (X ; \R_{\ge 0} )$ and $\delta \in [0,1/2)$, then $ m = N^{2\delta} f + 1$ is a $\delta$-order function on $X$.
\end{ex}
\begin{proof}
First let $x,y\in U_i$ for some $i \in \mathcal{I}$ and define $m_i = m \circ \rho_i^{-1}$ and $f_i = f \circ \rho_i^{-1}$. Using that $m_i \ge 1$, for $\alpha \in \N^{2d}$ with $|\alpha| = 1$,
\begin{align}
(\p^\alpha_{x,\bar x} m_i )(x) = N^{2\delta } (\p_{x,\bar x }^\alpha f_i ) (x) \lesssim N^{2\delta} \sqrt{f_i (x)} \le 	N^{\delta} \sqrt{m_i(x)}.
\end{align} 
Here we use the fact that if $g \in C^\infty (\C^d; \R_{\ge 0})$ with bounded derivatives and $|\alpha |= 1$ then $|	\p^\alpha_{x,\bar x } g(x) | \lesssim \sqrt{g(x)} $ (see for instance \cite[Lemma 4.31]{Zworski}).

If $\alpha \in \N^{2d}$ with $|\alpha | = 2$, then because $f$ is bounded, $(\p^\alpha _{x,\bar x } m_i) (x) \lesssim N^{2\delta}$. So by Taylor expansion, there exists a $C> 0 $ such that
\begin{align}
m_i (x) &\le m_i (y) + C  ( \sqrt {m_i(y)} |x- y| N^{ \delta} +  |x - y|^2 N^{2\delta}) \label{eq:205} \\
&\lesssim (1 + |x-y|^2 N ^{2\delta} )m_i (y). \label{eq:206}
\end{align}
To see this last inequality, let $a = \sqrt{m_i(y)}$ and $b = |x - y|N^{\delta} $, then using that $a \ge 1$ and $b \ge 0$, the right-hand side of \eqref{eq:205} is:
\begin{align}
a^2 +C( ab + b^2) \lesssim (a^2 + b^2)\lesssim (a^2 + a^2 b^2)
\end{align}
which is the right-hand side of \eqref{eq:206}. As $X$ is compact, there exists a $C>0$ such that
\begin{align}
\frac{1}{C} |\rho_i(x) - \rho_i(y) | \le \dist(x,y) \le C |\rho_i (x) - \rho_i (y) |
\end{align}
for all $x,y\in U_i$. Therefore $m$ satisfies \eqref{order function} on the patch $U_i$ with $M_0 = 2$.

For the global statement, pick $x,y\in X$, and consider the minimum number of charts that cover a geodesic from $x$ to $y$ having length $\dist(x,y)$. Next, label these charts $U_0, \dots, U_M$ where $x\in U_0$ and $y\in U_M$. For each $U_i$ ($i \neq M$), select some $z_i \in U_i \cap U_{i+1} $. Then by the above, we have that
\begin{align}
m(x) \lesssim m(z_0) \left ( 1 + \dist(x,z_0) N^\delta \right )^2 \lesssim \cdots \lesssim m(y) \prod _{i=-1}^{M-1}\left (1 + \dist(z_i , z_{i+1} ) N^\delta \right )^2 
\end{align}
(where $z_{-1} := x$ and $z_M := y$). Then by the selection of the charts, $\dist(z_i,z_{i+1}) \le \dist(x,y)$, so that:
\begin{align}
m(x) \lesssim m(y) \left ( 1+ \dist(x,y) N^\delta\right )^{2M}.
\end{align}
Therefore we have \eqref{order function} for $M_0 = 2|\mathcal{I}|$ (where $|\mathcal{I}|$ is the number of charts).
\end{proof}

\begin{defi}[$\bm{S_\delta (m)}$]\label{def:symbol class}
Let $m$ be a $\delta$-order function on $X$ (with $\delta \in [0,1/2)$ fixed). Define $S_\delta (m)$ as the set of all smooth functions $f$ on $X$, which are allowed to depend on $N$, such that for all $\alpha \in \N^{2d}$, there exists $C_\alpha > 0$ such that
\begin{align}
|\p^\alpha_{x,\bar x } (f \circ \rho_i ^{-1} (x) ) | \le C_\alpha N^{\delta |\alpha|} m\circ \rho_i^{-1} (x) \label{eq:253}
\end{align}
for each $i\in \mathcal{I}$ and $x\in \rho_i (U_i)$.
\end{defi}

Similarly, by replacing $X$ by $\R^d$ or $\C^d$, we can define $S_\delta (m)$ for functions on $\R^d$ or $\C^d$.

\begin{defi}[\textbf{Asymptotic Expansion of Symbols}]\label{def:asymptotic expansion of symbols}
For $\delta \in [0,1/2)$, $m$ a $\delta$-order function on a quantizable K\"ahler manifold $X$, functions $f_j \in S_\delta(m)$ {(which may depend on $N$)} for $j\in \Z_{\ge 0}$ and $f \in S_\delta (m)$, we will write $f\sim \sum _0^\infty N^{-(1 - 2\delta )j} f_j$ if for all $\alpha \in \N^{2d}$, $M \in \N$, and $i \in \mathcal{I}$
\begin{align}\begin{split}
\p_{x,\bar x }^\alpha \Big ( f\circ \rho_i^{-1} (x) - \sum _{j = 0}^{M-1} N^{-(1-2\delta )j}& (f_j \circ \rho_i ^{-1} (x)) \Big ) \\
 &=\mathcal{O}_{\alpha,M}(N^{-M(1-2\delta)+ |\alpha |\delta} m\circ \rho_i ^{-1} (x)).  \end{split}\label{eq:300}
\end{align}
\end{defi}

By Borel's Theorem (see \cite[Theorem 4.15]{Zworski} for instance), given any $f_j \in S_\delta (m)$ we can always construct such an asymptotic sum.

\begin{prop}[\textbf{Borel's Theorem for $\bm{S_\delta (m)}$}]\label{prop:Borel}
Fixing $\delta \in [0,1/2)$, $m$ a $\delta$-order function on a quantizable K\"ahler manifold $X$, and $f_j \in S_\delta (m)$ for $j\in \Z_{\ge 0}$, then there exists $f\in S_\delta (m)$ such that \eqref{eq:300} holds and $f$ is unique modulo $\mathcal{O}(N^{-\infty})$ error.
\end{prop}
\begin{proof}
On each coordinate patch, $U_i$, define
\begin{align}
a_j (x) : = N^{-(1 - 2\delta)j } f_j ( \rho_i^{-1} (xN^{-\delta})) && \text{and} && \tilde m_i (x) : = m( \rho_i^{-1} (xN^{-\delta})).
\end{align}
In this case $a_j \in S_0 (\tilde m_i )$ for each $j$, and so by \cite[Theorem 4.15]{Zworski}, there exists $a\in S_0 (\tilde m_i)$ such that $a\sim \sum _0^\infty N^{-j(1-2\delta ) } a_j $ with uniqueness modulo $\mathcal{O}(N^{-\infty})$ error. On $U_i$, we let $f(x)= a(\rho(x)N^{\delta})$. We can glue each patch together by uniqueness to get a globally defined $f$.
\end{proof}

\begin{defi}[\textbf{Principal Symbol}]
Fixing $\delta \in [0,1/2)$, $m$ a $\delta$-order function on a quantizable K\"ahler manifold $X$, and $f\in S_\delta (m)$ given as an asymptotic sum $f\sim \sum_0^\infty N^{-(1-2\delta )j } f_j$ for $f_j \in S_\delta (m)$, then the principal symbol of $f$ is defined as the equivalence class of functions $f_0$ with the equivalence $f_0' \sim f_0$ if $f_0'-f_0 \in N^{-(1-2\delta)} S_\delta (m)$.
\end{defi}

We remark that if $f\in S_\delta (m)$ does not admit an asymptotic expansion, its principal symbol can still be defined as the equivalence class of functions $f_0$ such that $f - f_0 \in N^{-(1-2\delta) }S_\delta (m)$.

\begin{ex}	
Given a compact K\"ahler manifold $X$, smooth functions $f_j \in S_0(1)$ for $j\in \Z_{\ge 0}$, $f_0 \ge 0$, $f \sim \sum_0^\infty N^{-j} f_j $, $\delta \in [0,1/2)$ and $m = f_0 N^{2\delta } + 1$, then $f N ^{2\delta }\in S_\delta (m)$.	
\end{ex}	
\begin{proof}	
For each $i$, let $g _i = N^{2\delta} f\circ \rho_i^{-1}$ and $m_i = m \circ \rho_i^{-1}$. First, for $x\in U_i$	
\begin{align}	
|g_i (x)| \lesssim N^{2\delta} f_0 \circ \rho_i^{-1} (x) \lesssim m_i(x).	
\end{align}	
If $\alpha \in \N^{2d}$ with $|\alpha | = 1$, then:	
\begin{align}	
|\p^\alpha_{x,\bar x } g _i (x) | \lesssim N^{2\delta } \p^\alpha_{x,\bar x } \left (f_0 \circ \rho_i^{-1} \right ) (x) \lesssim N^{2\delta} \sqrt{f_0 \circ \rho_i^{-1} (x) } \lesssim N^\delta m_i (x),	
\end{align}	
using that $0 \le f_0< m N^{-2\delta}$ and $m \ge 1$. Then, because each $f_i$ is bounded, for all $\alpha \in\N^{2d}$ with $|\alpha | \ge 2$	
\begin{align}	
|\p^\alpha_{x,\bar x } g_i (x) | \lesssim N^{2 \delta } \p^\alpha_{x,\bar x } \left (f_0 \circ \rho_i^{-1}\right)(x ) \lesssim N^{2\delta } \le m(x) N^{2\delta} \le m (x)N^{ \delta |\alpha|}.	
\end{align}	
\end{proof}

\subsection{Almost Analytic Extension}

When applying the method of complex stationary phase, almost analytic extensions of smooth functions are constructed. We will briefly review results about almost analytic extensions, as well as prove estimates for almost analytic extensions of functions in $S_\delta (\R^d)$. Similar results about almost analytic extensions of functions with growth in $N$ are proven in \cite[Proposition 1.16]{Melin}.

We recall the notation $\p_z = \frac{1}{2} (\p_{_\Re z } - i \p_{\Im z})$ and $\dbar_z = \frac{1}{2} (\p_{\Re z} + i \p _{\Im z})$ to denote holomorphic and anti-holomorphic differentiation. Recall $f\in C^\infty(\C)$ is holomorphic in an open set $U\subset \C$ if and only if $\dbar _z f (z) = 0$ for all $z\in U$. To apply the method of complex stationary phase, we would like to take a smooth compactly supported function $f$ on $\R^d$ and extend it to a holomorphic function $\tilde f$ on $\C^d$ and apply the Cauchy integral formula. Requiring holomorphy of $\tilde f$ is impossible by Liouville's theorem. However, if we relax the condition of holomorphy to $\dbar \tilde f$ vanishing up to infinite order as we approach the real axis, we can apply a variant of the Cauchy integral formula.

For a smooth function $f \in C^\infty(\C^d)$, we write $f = \mathcal{O} (\abs{\Im{z}}^\infty)$ to mean that for any $M \in \N$ and compact set $K\subset \C$, there exists $C = C(M,K) > 0$ such that
\begin{align}
|f(z) | \le C \abs{\Im z}^M
\end{align}
for all $z\in K$.

\begin{prop}[\textbf{Almost Analytic Extension of $\bm{C_0^\infty(\R^d)}$ Functions}] \label{thm:aa e}
If $f \in C_0^\infty (\R^d)$, then there exists $\tilde f \in C_0^\infty(\C^d)$ such that for $\alpha,\beta \in \N^d$, $|\beta |\ge  1$ 
\begin{enumerate}
\item $\tilde f |_{\R^d} = f$,
\item $\dbar_z \tilde f (z) = \mathcal{O} (\abs{ \Im z } ^\infty) $,
\item $\p^\alpha_z \dbar^\beta_{ z } \tilde f(z) = \mathcal{O}_{\alpha,\beta} ( \abs{\Im z }^\infty)$,
\item $\p^\alpha _z \tilde f(z) = \mathcal{O}_\alpha (1)$.
\end{enumerate}
Moreover, given any neighborhood containing the support of $f$ in $\C^d$, such a $\tilde f$ can be constructed to be supported in this neighborhood.
\end{prop}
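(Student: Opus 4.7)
The plan is the classical Borel-sum construction of an asymptotically holomorphic extension. Fix a cutoff $\chi \in C_0^\infty(\R;[0,1])$ equal to $1$ on $[-1,1]$ and supported in $[-2,2]$, and choose a sequence $\lambda_k \nearrow \infty$ to be pinned down at the end. Writing $z=x+iy$, I would set
\[
\tilde f(x+iy) \;=\; \sum_{\alpha \in \N^d} \frac{(iy)^\alpha}{\alpha!}\,\p_x^\alpha f(x)\,\chi(\lambda_{|\alpha|}|y|).
\]
Each summand is compactly supported in $|y|\le 2/\lambda_{|\alpha|}$, so by letting $\lambda_k$ grow sufficiently fast (depending on $\|\p^\alpha f\|_{L^\infty}/\alpha!$), the series converges in every $C^M$-norm and yields $\tilde f \in C^\infty(\C^d)$ with support arbitrarily close to $\supp f \subset \R^d$. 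Multiplying by a cutoff in $\C^d$ equal to $1$ near $\supp f$ and supported in the prescribed neighborhood produces the support clause without disturbing the estimates below.

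Properties (1) and (4) are almost immediate: when $y=0$ only the $\alpha=0$ term survives and equals $f(x)$; and a $\p_z^\beta$-derivative of the $\alpha$-summand has $L^\infty$-norm $\lesssim \lambda_{|\alpha|}^{|\beta|}(\lambda_{|\alpha|}^{-1})^{|\alpha|}\|\p^\alpha f\|_\infty / \alpha!$, which is summable once $\lambda_k$ grows fast enough. For (2), I would compute $\dbar_j \tilde f$ term by term: using $\dbar_j \p^\alpha f(x) = \tfrac{1}{2}\p^{\alpha+e_j}f(x)$ and $\dbar_j(iy)^\alpha = -\tfrac{\alpha_j}{2}(iy)^{\alpha-e_j}$, the contributions in which $\chi$ is not differentiated would telescope to zero after an index shift if all cutoffs coincided; the actual leftover is a sum of mismatch terms involving differences $\chi(\lambda_{|\alpha|}|y|) - \chi(\lambda_{|\alpha|+1}|y|)$, which, together with the contributions where $\chi$ is differentiated, are supported in annuli of the form $|y|\in[1/\lambda_{|\alpha|+1},2/\lambda_{|\alpha|}]$. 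On each such annulus
\[
|y|^{|\alpha|} \;\le\; |y|^n\,(2/\lambda_{|\alpha|})^{|\alpha|-n},
\]
so the offending sum is bounded by $|y|^n$ times a series $\sum_\alpha (2/\lambda_{|\alpha|})^{|\alpha|-n}\lambda_{|\alpha|}\|\p^\alpha f\|_\infty/\alpha!$, which is finite for every $n$ once $\lambda_k$ is chosen to grow fast relative to $\|\p^\alpha f\|_\infty/\alpha!$. This gives $\dbar_j \tilde f = \mathcal{O}(|y|^\infty)$.

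Property (3) is proved in the same spirit: apply $\p_z^\alpha \dbar_z^\beta$ to the termwise computation; any extra derivative either lands on a Taylor factor (producing the same shape of series) or on a cutoff $\chi$ (restricting to an annulus, where the $|y|^n$ trick still applies). The growth rate required of $\lambda_k$ now depends on $|\alpha|+|\beta|$, but a single sequence handling all countably many constraints from (2)--(4) can be extracted by a diagonal argument. The only genuine obstacle is this bookkeeping, which is the same maneuver as in the proof of Borel's theorem on prescribing Taylor jets, and is routine.
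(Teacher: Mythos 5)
Your construction is correct, but it is a genuinely different route from the one the paper (following Treves, Ch.\ 10.2) takes. The paper's extension is Fourier-based: in one dimension, $\tilde f(x+iy) = \psi(x)(2\pi)^{-1}\int_\R e^{i(x+iy)\xi}\hat f(\xi)\chi(\xi y)\,\dd\xi$, where almost analyticity falls out in a few lines because $\dbar$ hits only $\chi(\xi y)$, which localizes to $|\xi|\sim |y|^{-1}$ where the rapid decay of $\hat f$ gives $\mathcal{O}(|\Im z|^\infty)$ together with all the derivative bounds (3)--(4) simultaneously; this construction is also the one that rescales painlessly to the $S_\delta(m)$ setting of Proposition \ref{thm.extension of symbols}, which is why the paper uses it. Your Borel-type sum $\sum_\alpha \frac{(iy)^\alpha}{\alpha!}\p_x^\alpha f(x)\chi(\lambda_{|\alpha|}|y|)$ is the classical alternative: more elementary (no Fourier transform), it extends verbatim off any totally real subspace, and the telescoping-plus-annulus argument you sketch for (2), the support localization $|y|\le 2/\lambda_0$ (which already yields the "support in a prescribed neighborhood" clause once $\lambda_0$ is large, without any extra cutoff), and the reduction of (3) to the same mechanism are all sound. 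The price is the bookkeeping: note that convergence in $C^M$ and the bounds (3)--(4) require $\lambda_k$ to dominate $\max_{|\gamma|\le 2k}\norm{\p^\gamma f}_\infty$ (not merely $\norm{\p^\alpha f}_\infty/\alpha!$ as written), since derivatives land on $\p^\alpha f$ as well as on the cutoffs; this is harmless for a fixed $f\in C_0^\infty$ and is absorbed by the diagonal choice of $\lambda_k$ you invoke, but it is exactly the step that makes the Fourier construction cleaner when, as later in the paper, the symbol's derivatives grow with $N$.
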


\begin{rem}
While included for clarity, we note that conditions $(2)$ and $(3)$ are equivalent in Proposition \ref{thm:aa e} as proven in \cite[Chapter 10, Lemma 2.2]{Treves}.
\end{rem}

A construction in one dimension of such an extension (which is easily generalized to higher dimensions) is 
\begin{align}
\tilde f(x+iy) =\frac{\psi(x)}{2\pi}\int _\R e ^{i(x + iy)\xi } \hat f(\xi) \chi (\xi y) \dd\xi 
\end{align}
where $\psi,\chi \in C_0^\infty(\R)$, with $\chi \equiv 1$ near $0$ and $\psi \equiv 1$ on the support of $f$. See \cite[Chapter 10.2]{Treves} for further discussion.

Almost analytic extensions are not unique. However, if $\tilde f$ and $\tilde g$ are two almost analytic extensions of $f$ on $\R^d$, then by Taylor expansion, $\p^\alpha_{z,\bar z } (\tilde f(z) - \tilde g(z)) = \mathcal{O}_\alpha (\abs{\Im z }^\infty)$ for any $\alpha \in \N^{2d}$.

Furthermore, smooth functions can be extended off any totally real subspace, that is a subspace $V \subset \C^d$ such that $i V \cap V = \set{0}$. In this way if $f \in C^\infty_0(V)$, then there exists $\tilde f \in C^\infty (\C^d)$ such that $\dbar_z \tilde f (z) = \mathcal{O} (\abs{ \dist(z,V) }^\infty)$. While any holomorphic function is determined by its restriction to a maximally totally real subspace, the same is true for almost analytic extensions modulo $\mathcal{O} (\abs{ \dist (z,V) }^\infty)$ error.

\begin{prop}[\textbf{Almost Analytic Extensions of $\bm{S_\delta (m)}$ Functions on $\bm{\R^d}$}]\label{thm.extension of symbols}
Given a $\delta$-order function $m$ on $\R^d$ (with $\delta \in [0,1/2)$), and $f\in S_\delta (m)\cap C_0^\infty (\R^d)$, then there exists $C>0$ and an almost analytic extension $\tilde f \in C^\infty_0(\C^d)$ such that for $\alpha,\beta \in \N^d$, $|\beta| \ge 1$,
\begin{enumerate}
\item$\tilde f |_{\R^d } = f$,\label{item:1}
\item $\dbar_z \tilde f (z) = m (\Re z ) N^{\delta } \mathcal{O} ( \abs{\Im z N^{\delta} }^\infty)$, \label{item:2}
\item $\p_z^\alpha \dbar_{z}^\beta  \tilde f(z) = \mathcal{O}_{\alpha,\beta} (|\Im z N^{\delta} |^\infty ) N^{\delta (1 + |\alpha | + |\beta|)} m(\Re{z}) $,\label{item:3}
\item $\p_z^\alpha \tilde f(z) = \mathcal{O}_\alpha ( N^{\delta |\alpha|}) m (\Re{z})$,\label{item:4}
\item $\supp \tilde f \subset \set{\abs{\Im z } < C N^{-\delta} }$.\label{item:5}
\end{enumerate}

\end{prop}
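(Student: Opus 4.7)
The plan is to reduce to the standard ($\delta = 0$) almost analytic extension via a rescaling argument. Set $w := N^\delta z$ and introduce
\begin{align}
h(w) := f(N^{-\delta} w), \qquad \tilde m(w) := m(N^{-\delta} w).
\end{align}
The chain rule turns the $S_\delta(m)$ estimate \eqref{eq:253} into $|\p_w^\alpha h(w)| \le C_\alpha \tilde m(\Re w)$ with the same constants $C_\alpha$, and the order-function condition for $m$ becomes $\tilde m(w)/\tilde m(w') \le C(1 + |w - w'|)^{M_0}$ with the same $C, M_0$. Thus $h$ is a classical symbol in $S_0(\tilde m)$, uniformly in $N$: none of the underlying constants depend on $N$.

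Next, I would construct a standard almost analytic extension $\tilde h$ of $h$ by a Borel-type summation. Taking $\chi \in C_0^\infty(\R^d)$ equal to $1$ near the origin and supported in the unit ball, set
\begin{align}
\tilde h(x+iy) := \sum_{\alpha \in \N^d} \frac{(iy)^\alpha}{\alpha!} \p^\alpha h(x)\, \chi(\lambda_{|\alpha|} y),
\end{align}
with $\lambda_k \nearrow \infty$ chosen rapidly enough relative to the ($N$-independent) constants $C_\alpha$ so that $\tilde h$ and all of its derivatives converge absolutely. This classical Borel construction (cf.\ the discussion following Proposition \ref{thm:aa e}) yields $\tilde h|_{\R^d} = h$, $\p_w^\alpha \tilde h(w) = \mathcal{O}(\tilde m(\Re w))$, $\dbar \tilde h(w) = \mathcal{O}(|\Im w|^\infty)\, \tilde m(\Re w)$, and $\supp \tilde h \subset \{|\Im w| \le C_0\}$ for some fixed $C_0$: the infinite vanishing of $\dbar \tilde h$ at $\{y=0\}$ comes from the usual Taylor-remainder cancellation among consecutive terms, outside an exceptional region where a $\dbar \chi(\lambda_{|\alpha|} y)$ factor contributes a power $\lambda_{|\alpha|}^{-K}$ that can be absorbed.

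Finally, define $\tilde f(z) := \tilde h(N^\delta z)$. Since $z \mapsto N^\delta z$ is $\R$-linear and holomorphic, the chain rule gives
\begin{align}
\p_z^\alpha \tilde f(z) = N^{\delta|\alpha|}(\p_w^\alpha \tilde h)(N^\delta z), \qquad \p_z^\alpha \dbar_z^\beta \dbar_{\bar z_j} \tilde f(z) = N^{\delta(|\alpha|+|\beta|+1)}(\p_w^\alpha \dbar_w^\beta \dbar_{\bar w_j} \tilde h)(N^\delta z).
\end{align}
Substituting $|\Im w| = N^\delta |\Im z|$ and $\tilde m(\Re w) = m(\Re z)$, items (1), (4), and (5) are immediate, and (2), (3) follow from the $\dbar$-estimates on $\tilde h$, using that (2) $\iff$ (3) at the level of $\tilde h$ by the same Taylor argument cited in the proof of the preceding proposition. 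The main technical point is verifying $N$-uniformity of the Borel summation in the middle step: this reduces to tracking that the $C_\alpha$ in \eqref{eq:253} and the $C, M_0$ in the order-function estimate are independent of $N$, which is built into the definitions. Once this uniformity is secured, the remaining verifications are direct chain-rule computations.
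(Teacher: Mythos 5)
Your proposal is correct, and its first and last steps coincide with the paper's: the paper also rescales $g(x)=f(N^{-\delta}x)$, $\tilde m(x)=m(N^{-\delta}x)$ to get an $N$-uniform symbol, builds a classical almost analytic extension, and recovers the stated estimates by substituting $z\mapsto N^{\delta}z$, which is exactly your chain-rule bookkeeping for items (1)--(5). Where you differ is the middle step. The paper handles the unbounded weight by localizing: it takes a $\Z^d$-invariant partition of unity $\psi_n$, notes via the order-function inequality that $\tilde m$ is comparable to the frozen constant $\tilde m_n=\tilde m(n)$ on $\supp(g\psi_n)$, invokes the standard $C_0^\infty$ extension result for each piece with constants uniform in $n$ after dividing by $\tilde m_n$, and sums; the $\dbar$, derivative, and support estimates are then inherited termwise. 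You instead run a single global Borel-type summation $\sum_\alpha \frac{(iy)^\alpha}{\alpha!}\p^\alpha h(x)\chi(\lambda_{|\alpha|}y)$ in which the weight $\tilde m(\Re w)$ simply rides along pointwise in $x$; this works because the constants $C_\alpha$ are $N$-independent after rescaling, and the usual telescoping/cutoff argument gives $\dbar\tilde h=\mathcal{O}(|\Im w|^\infty)\,\tilde m(\Re w)$ (this is essentially the Dimassi--Sj\"ostrand construction for symbols with order functions, and in fact does not even need the order-function inequality, which the paper uses only to freeze $\tilde m$ at lattice points). The trade-off: the paper's route lets one quote the compactly supported extension as a black box at the cost of the partition-of-unity bookkeeping, while yours avoids the decomposition but requires verifying the weighted convergence and remainder estimates of the Borel sum directly, which you correctly flag as the main point to check. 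One shared caveat worth noting: both constructions give compact support in $\Re z$ (hence $\tilde f\in C_0^\infty(\C^d)$ as stated) only when $f$ itself is compactly supported, which is the situation in which the proposition is applied; otherwise only the support condition (5) in the imaginary directions holds.
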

\begin{proof}
Let $\psi \in C_0^\infty(\R^d)$ be such that $\psi_n(x): = \psi(x-n)$ for $n\in \Z^d$ is a smooth partition of unity. Next let $g(x) = f(N^{-\delta }x) $ and $\tilde m (x)= m(N^{-\delta} x )$, so that for each $\alpha \in \N^{2d}$, $|\p^\alpha_{x,\bar x} g(x)|\lesssim_\alpha N^{-\delta |\alpha | } N^{\delta |\alpha |} m (N^{-\delta} x) \lesssim_\alpha \tilde m (x) $.

For each $n\in \Z^d$, let $g _n (x) = g(x) \psi_n (x)$.
{Let $c_d>0$ be such that $\supp g_n \subseteq  \supp \psi_n \subset \set{|x - n| < c_d}$.} 
{Define $\tilde m_n := \max_{|x - n| \le c_d} \tilde m (x)$.}
Note that $\tilde m(x) /\tilde m (y) \le C (1 + |x-y| )^{M_0}$ so that ${ C^{-1} (1+ c_d)^{-M_0} \tilde m _n \le  m(x) \le C (1+c_d)^{M_0} \tilde m_n}$ for all $x \in \supp g_n$. Therefore for all $\alpha \in \N^d$, there exists $C_\alpha > 0$ such that $|\p^\alpha g_n (x) | \le C_\alpha \tilde m_n$. Then, by Proposition \ref{thm:aa e}, an almost analytic extension of $g_n$, $\tilde g_n$, can be constructed to satisfy the following.
\begin{itemize}
\item $\tilde g_n | _{\R^d} = g_n$.
\item $\dbar_z \tilde g_n (z) = \tilde m _n \mathcal{O} (\abs{\Im z } ^\infty)$ (with constants independent of $n$).
\item For each $\alpha,\beta \in \N^d$, $|\beta | \ge  1$: $\p^\alpha_z \dbar _z^\beta  \tilde g_n (z) = \tilde m_n \mathcal{O}_{\alpha,\beta} (\abs{\Im z }^\infty) $.
\item For all $\alpha \in \N^d$: $|\p_{z} ^\alpha \tilde g_n (z)| = \tilde m_n \mathcal{O} _\alpha (1)$,
\item $\supp \tilde g_n$ is contained within a complex neighborhood of the real ball of radius {$c_d$} centered around $n$.
\end{itemize}

\noindent Since $\sum g_n = g$, it follows that $\sum g_n (xN^{\delta} ) =f(x)$. Therefore the natural choice of an extension of $f$ is $\tilde f(z): = \sum \tilde g_n (zN^{\delta})$.

From this, \eqref{item:1} follows immediately. To see \eqref{item:2}, let $\tilde g (z) : = \tilde f (N^{-\delta} z)$, then for all $M\in \N$:
\begin{align} 
|\dbar \tilde g(z) | \le \sum _{n : z \in \supp \tilde g_n} |\dbar _{z} \tilde g_n (z) | \lesssim_M \abs{\Im z }^M \sum _{n : z \in \supp \tilde g_n} \tilde m_n \lesssim_M \abs{\Im z }^M \tilde m(\Re z ) .
\end{align}
Changing variables, $z \to zN^{\delta}$, we get:
\begin{align}
|\dbar f(z)| \lesssim_M \abs{\Im z }^M N^{\delta M -1} m (\Re z ) .
\end{align}
By the same change of variables, \eqref{item:3}, \eqref{item:4}, and \eqref{item:5} follow similarly.
\end{proof}

\section{Composition of Toeplitz operators with symbols in \texorpdfstring{$S_\delta(m)$}{Sδ(m)}} \label{section:composition}

This section estimates the composition of two symbols in $S_\delta (m_1)$ and $S_\delta(m_2)$ where $m_1$ and $m_2$ are two $\delta$-order functions on $X$ with $\delta\in[0,1/2)$ fixed. That is, if $ f \in S_\delta (m_1)$ and $g \in S_\delta (m_2)$, then this section constructs $h \in S_\delta (m_1 m_2 ) $, such that $T_{N,f} \circ T_{N,g} = T_{N, h} + \mathcal{O} (N^{-\infty})$. Here the big-O is in terms of the norm $L^2 (X,L^N) \to L^2 (X,L^N)$.

This proof will be broken into several steps which rely on the method of complex stationary phase. Before this, we explicitly write out the Schwartz kernel of Toeplitz operators. A brief digression into working with smooth sections is helpful.

\subsection{Holomorphic Sections}\label{section:holomorphic sections}

On the holomorphic Hermitian line bundle $L$ over $X$, let $\theta_j$ be trivializations on the open sets $U_j$ with transition functions $g_{jk}$ defined by: $\theta_j \circ \theta_k ^{-1} (x,t) = (x,g_{jk}(x) t)$ for $x\in U_j$ and $t\in \C$. Sections on $L$ can be locally written $s(x) = s_j (x) e_j (x)$ where $e_j (x) := \theta_j ^{-1} (x,1)$ and $s_j$ are complex valued functions. A global section $s$, given by $s_j$'s, must obey the transition rule $s_k (x)= g_{kj} (x) s_j (x)$. Recall that lengths of elements of $L$ coincide with the K\"{a}hler potential $\phi$, that is $\norm{e_j (x)} = e^{-\phi_j (x)}$. 

Using the volume form $\mu : = \omega^{\land d} / d!$, an $L^2$ inner-product on sections of $L$ is explicitly written
\begin{align}
\ip{u}{v} = \sum_j \int _{U_j} \chi_j (x) u_j (x) \bar v_j (x) e ^{- \phi_j (x) } \dd\mu (x),
\end{align}
where $\chi_j$ is a partition of unity subordinate to $U_j$, and $u,v$ are smooth sections such that $u = \sum u_j e_j$ and $v = \sum v_j e_j$. In this way, smooth sections can locally be described by smooth functions. Throughout this paper, as most of the analysis is local, sections are treated as smooth functions and $e_j$ are not written.

Sections of the $N^{th}$ tensor power of $L$, denoted by $L^N$, are locally written $s(x) = s_j (x) e_j (x)^{\otimes N}$ where $s_j$ obey the transition rule $s_k (x) = g_{kj}^N (x) s_j (x)$. A Hermitian metric on $L^N$ can be constructed by raising the original metric to the $N^{th}$ power. In this way, there is a natural inner product on sections of $L^N$ coming from the original metric. Given sections $u$ and $v$ on $L^N$, define
\begin{align}
\ip{u}{v} = \sum _j \int _{U_j} \chi _j (x) u_j (x) \bar v_j (x) e^{-N \phi_j (x) } \dd\mu (x).
\end{align}
In the following section, we will write operators on sections as integral kernels. The Bergman projector $\Pi_N$ is a bounded map from $L^2 (X, L^N) \to L^2 (X,L^N)$, and by the Schwartz kernel Theorem (see \cite[Proposition 6.3.1]{Floch}), has Schwartz kernel $\Pi_N \in L^2 (X \times \bar X , L^N \boxtimes \overline{L^N})$. Here $\bar X$ is the manifold $X$ with symplectic form $-\omega$ and complex structure opposite to the complex structure on $X$. $\Pi_N$ is a smooth section of $L^N \boxtimes \overline{L^N}$ which is holomorphic in the first argument and anti-holomorphic in the second argument, so we write $\Pi_N \in H^0 (X \times \overline X, L^N \boxtimes \overline{L^N})$. Similarly, Schwartz kernels of Toeplitz operators also live in $H^0 (X \times \overline X, L^N \boxtimes \overline{L^N})$. Just as smooth sections can locally be described by smooth functions, smooth sections of $L^N \boxtimes \overline{L^N}$ can be described by smooth functions of two variables.  A holomorphic section of $L^N \boxtimes \overline{L^N}$ can be locally defined by a function of two variables which is holomorphic in the first component, and anti-holomorphic in the second component. For this reason, we will locally write $\Pi_N$ as $\Pi_N(x,\bar y)$, so that the function $\Pi_N(x,y)$ is holomorphic in both variables. In the remainder of this paper, this convention will be used for both holomorphic functions, and almost-holomorphic functions.

\subsection{Bergman Kernels} \label{subsection:Bergman kernels}
For a smooth function $f$ on $X$, the associated Toeplitz operator has kernel
\begin{align}
T_{N,f} (x,\bar y) = \int_X \Pi_N(x,\bar w) f(w) \Pi_N (w,\bar y) e^{-N\phi(w)} \dd \mu(w).
\end{align}
This paper does not include the $L^2$ weight in the kernel, that is if $u$ is a smooth section on $L^N$, then
\begin{align}
T_{N,f} [u](x) := \int _X T_{N,f} (x,\bar y) u(y) e^{-N \phi(y) } \dd\mu(y).
 \end{align}
In \cite{BBS}, Berman, Berndtsson, and Sj\"{o}strand provided a direct proof to approximate the Bergman kernel near the diagonal (for each $N_0 \in \N$) by:
\begin{align}
\Pi^{N_0} _N (x,\bar y) = \left ( \frac{N}{ 2 \pi}\right ) ^d e^{N \psi (x,\bar y) } \left (1 + \sum_1^{N_0} b_i (x,\bar y) \right ) \label{b_is}
\end{align}
where $b_m \in C^\infty (X \times X ; \C)$ are locally almost analytic off $\set{(x,\bar x)}$, and $\psi$ is an almost analytic extension of $\phi$ such that $\psi(x,\bar x) = \phi(x)$. By \cite{BBS},
\begin{align}
\Pi_N (x,\bar y) = \Pi_N^{N_0} (x,\bar y) + \mathcal{O} (N^{d - N_0 - 1} ) e^{ \frac{N}{2} ( \phi (x) +\phi (y) )}. \label{eq:399}
\end{align}

Conversely, away from the diagonal we have the following Lemma proven in \cite{Ma}.

\begin{lemma}[\textbf{Off Diagonal Decay of Bergman Kernel}]\label{lemma:offdiagonal}
For $x,y\in X$, there exists $C,c > 0 $ such that:
\begin{align}
\norm{\Pi_N (x, \bar y)}_{h_N}\le C N^d e^{-c \sqrt{N} \dist(x,y)}
\end{align}
where $\dist(\cdot ,\cdot ) $ is the Riemannian distance on $X$ and locally $\norm{\Pi_N (x,\bar y)}_{h_{N }}$ is
\begin{align}
e^{-\frac{N}{2} (\phi(x) + \phi(y))} |\Pi_N (x,\bar y) |. \label{eq:384}
\end{align}
\end{lemma}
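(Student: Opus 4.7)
The plan is to split into near-diagonal and off-diagonal regimes and combine them, observing that $e^{-c N d(x,y)^2}$ dominates $e^{-c' \sqrt{N} d(x,y)}$ whenever $\sqrt{N} d(x,y)$ exceeds an absolute constant, and is otherwise bounded by $1$ (so the two exponentials are comparable up to a harmless constant). Concretely, I will produce a Gaussian bound for $d(x,y) < \epsilon$ from the BBS approximation \eqref{b_is}, and I will invoke the spectral-gap / finite-propagation-speed argument of \citep{Ma} for $d(x,y) \ge \epsilon$.

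In the near-diagonal regime, pick $\epsilon > 0$ small enough that \eqref{b_is} is valid on $\{d(x,y) < \epsilon\}$. The decisive geometric input is that strict plurisubharmonicity of $\phi$ (equivalently, positivity of $\omega$) together with $\psi(x,\bar x) = \phi(x)$ forces
\begin{align}
2\Re{\psi(x,\bar y)} \le \phi(x) + \phi(y) - c_0\, d(x,y)^2
\end{align}
for $(x,y)$ close to the diagonal. This follows by Taylor expanding $\psi$ to second order in K\"ahler normal coordinates around the anti-diagonal: after subtracting the diagonal values of $\phi$, the quadratic remainder in $y - \bar x$ is a negative-definite Hermitian form, by positivity of the Levi form $\p \dbar \phi$. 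Multiplying \eqref{b_is} by $e^{-N(\phi(x)+\phi(y))/2}$ and taking absolute values then gives
\begin{align}
\norm{\Pi_N(x,\bar y)}_{h_N} \le C N^d e^{-c_0 N d(x,y)^2 / 2},
\end{align}
which is stronger than the claimed $e^{-c\sqrt N d(x,y)}$ in this range.

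For the off-diagonal regime $d(x,y) \ge \epsilon$, the BBS approximation is no longer available, and this is the main obstacle. The standard remedy, carried out by Ma and Marinescu in \citep{Ma}, uses the Kodaira Laplacian $\Box_N$ associated to $L^N$: positivity of the curvature combined with the Bochner-Kodaira-Nakano identity produces a spectral gap of size $c_1 N$ for $\Box_N$ on the orthogonal complement of holomorphic sections, so that $\Pi_N$ can be realized as a function of $\sqrt{\Box_N}$ with support properties inherited from finite propagation speed of the wave group $\cos(t\sqrt{\Box_N})$. A standard Agmon-type localization argument then converts the spectral lower bound $\sqrt{c_1 N}$ into the long-range decay $e^{-c\sqrt{N} d(x,y)}$, while the $N^d$ prefactor is absorbed from the uniform on-diagonal estimate $\norm{\Pi_N(x,\bar x)}_{h_N} \le C N^d$ (itself also a consequence of \eqref{b_is}). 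Since this off-diagonal estimate is precisely the content of the cited result in \citep{Ma}, I will quote it directly, combine it with the near-diagonal Gaussian bound above, and adjust constants to obtain the lemma as stated.
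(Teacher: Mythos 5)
There is a genuine gap in your near-diagonal step, and it matters because you rely on that step for the entire region $\{d(x,y)<\epsilon\}$. The expansion \eqref{b_is} only describes the approximation $\Pi_N^{N_0}$; the true projector satisfies $\Pi_N = \Pi_N^{N_0} + \mathcal{O}(N^{d-N_0-1})\, e^{\frac{N}{2}(\phi(x)+\phi(y))}$, so multiplying by $e^{-\frac{N}{2}(\phi(x)+\phi(y))}$ and using \eqref{eq.gaussian behaviour} only yields $\norm{\Pi_N(x,\bar y)}_{h_N}\le C N^d e^{-c_0 N d(x,y)^2/2} + \mathcal{O}(N^{-M})$ for each fixed $M$, i.e.\ a Gaussian bound plus an additive $\mathcal{O}(N^{-\infty})$ error. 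That error is only polynomially small, and is therefore \emph{not} dominated by $N^d e^{-c\sqrt{N}\,d(x,y)}$ once $\sqrt{N}\,d(x,y)\gg \log N$: at a fixed separation $d(x,y)=\epsilon/2$, say, the claimed right-hand side is of size $e^{-c'\sqrt N}$, far smaller than any $N^{-M}$. So the pure exponential estimate of the lemma cannot be extracted from \citep{BBS} on $\{d(x,y)<\epsilon\}$; this is precisely why the paper's Lemma \ref{lemma.global bergman kernel bound} retains the extra $\mathcal{O}(N^{-\infty})$ term in its near-diagonal case, and why Lemma \ref{lemma:offdiagonal} is not proved through the BBS expansion at all.

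The repair is immediate, and it is what the paper does: the Ma--Marinescu estimate you quote for $d(x,y)\ge\epsilon$ is in fact global. The spectral-gap plus finite-propagation-speed (Agmon-type) argument gives $\norm{\Pi_N(x,\bar y)}_{h_N}\le C N^d e^{-c\sqrt{N}\,d(x,y)}$ for \emph{all} $x,y\in X$, with no restriction to the far-off-diagonal regime, so the lemma is simply a citation of \citep{Ma} (the paper offers no further proof, and remarks that \citep{Christ} gives even stronger decay). Your description of the mechanism behind that result is correct; just invoke it on all of $X\times X$ and drop the BBS-based near-diagonal step, or else settle for the weaker Gaussian-plus-$\mathcal{O}(N^{-\infty})$ statement, which is all that \eqref{b_is} can give and all that the paper actually uses later via Lemma \ref{lemma.global bergman kernel bound}.
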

It should be noted that in \cite{Christ}, Christ proved a stronger decay estimate of $\Pi_N$ away from the diagonal, but we want to avoid fine analysis at this early stage.

By Taylor expanding $\psi (x,\bar y)$ near the diagonal, it follows that there exists $C> 0$ such that
\begin{align}
\Re {\psi (x,\bar y)} \le -C |x - y|^2 + \frac{1}{2} (\phi (x) +\phi(y)) \label{eq.gaussian behaviour}
\end{align}
(see for instance \cite[proposition 2.1]{Hitrik}). This, along with Lemma \ref{lemma:offdiagonal}, provides the following global bound.
\begin{lemma}[\textbf{Global Bergman Kernel Bound}]\label{lemma.global bergman kernel bound}
There exists $\e,C,c > 0 $ such that:
\begin{align}
\norm{\Pi_N (x,\bar y)}_{h_{N }} \le \begin{cases}
C N^d e^{-C N |x -y|^2 } + \mathcal{O} (N^{-\infty}) & \text{if} \ \dist(x,y) < \e, \\
C N^d e ^{- c \sqrt{N} \e } & \text{if} \ \dist(x,y) > \e .
\end{cases} 
\end{align}
Where, locally $\norm{\Pi_N (x,y)}_{h_{N }}$ is given by \eqref{eq:384} and $|x- y|$ is the distance in coordinates of $x$ and $y$ ($\e > 0 $ is chosen sufficiently small such that $x$ and $y$ are in the same chart).
\end{lemma}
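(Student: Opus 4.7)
The plan is to split into the near-diagonal regime $d(x,y)<\varepsilon$ and the off-diagonal regime $d(x,y)\ge\varepsilon$, choosing $\varepsilon>0$ small enough that any two points with $d(x,y)<\varepsilon$ lie in a common chart of the fixed finite atlas, and small enough that the Taylor-type estimate \eqref{eq.gaussian behaviour} is valid in coordinates on that chart. The off-diagonal case is essentially immediate from Lemma \ref{lemma:offdiagonal}, so the substance is in the near-diagonal case.

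For the near-diagonal case I would combine the BBS expansion with the Gaussian bound \eqref{eq.gaussian behaviour}. Fix $N_0$ large (eventually sent to infinity to absorb the polynomial prefactor into the $\mathcal{O}(N^{-\infty})$ term). In local coordinates where the Hermitian metric has weight $e^{-N\phi}$, one has
\begin{align}
\Pi_N(x,\bar y) = \left(\tfrac{N}{2\pi}\right)^d e^{N\psi(x,\bar y)}\Bigl(1+\sum_{i=1}^{N_0}b_i(x,\bar y)\Bigr) + \mathcal{O}(N^{d-N_0-1})\,e^{\frac{N}{2}(\phi(x)+\phi(y))}.
\end{align}
Multiplying by the local weight $e^{-\frac{N}{2}(\phi(x)+\phi(y))}$ to compute $\|\Pi_N(x,\bar y)\|_{h_N}$ gives a factor $e^{N(\Re\psi(x,\bar y)-\frac12(\phi(x)+\phi(y)))}$, which by \eqref{eq.gaussian behaviour} is bounded by $e^{-CN|x-y|^2}$. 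The smooth functions $b_i$ are uniformly bounded on the compact chart closure, so $1+\sum b_i(x,\bar y) = \mathcal{O}(1)$ uniformly in $x,y$ on that chart. This yields
\begin{align}
\|\Pi_N(x,\bar y)\|_{h_N}\le CN^d e^{-CN|x-y|^2} + \mathcal{O}(N^{d-N_0-1}),
\end{align}
and choosing $N_0$ arbitrarily large turns the remainder into $\mathcal{O}(N^{-\infty})$, which is what the statement asserts.

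For $d(x,y)\ge\varepsilon$, Lemma \ref{lemma:offdiagonal} gives directly $\|\Pi_N(x,\bar y)\|_{h_N}\le CN^d e^{-c\sqrt{N}d(x,y)}\le CN^d e^{-c\sqrt{N}\varepsilon}$. I expect the only mild obstacle to be bookkeeping: one must check that the constant $C$ in the Gaussian factor $e^{-CN|x-y|^2}$ from \eqref{eq.gaussian behaviour} is uniform over the finite atlas (which follows from compactness of $X$ and the finiteness of charts), and that $|x-y|$ in coordinates is comparable to $d(x,y)$ on each chart, so the transition between the local coordinate estimate and the global Riemannian distance is legitimate. Neither of these requires fine analysis, so the proof is essentially a direct combination of \eqref{b_is}, \eqref{eq.gaussian behaviour}, and Lemma \ref{lemma:offdiagonal}.
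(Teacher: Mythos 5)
Your argument is correct and is exactly the paper's route: the paper obtains this bound in one line by combining the BBS expansion \eqref{b_is} with the Gaussian phase estimate \eqref{eq.gaussian behaviour} near the diagonal and Lemma \ref{lemma:offdiagonal} off the diagonal, which is precisely your decomposition. Your added bookkeeping (uniformity over the finite atlas, comparability of $|x-y|$ and $d(x,y)$ on a chart, sending $N_0\to\infty$ to absorb the remainder) is the right way to fill in the details the paper leaves implicit.
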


\begin{rem}[\textbf{Factor of Two}]
By \cite[Proposition 3.5.6]{Floch}, a quantizable K\"ahler manifold will have a unique Hermitian metric, up to multiplication by a constant. The choice of such a constant varies in the literature. When discussing Bergman kernel asymptotics, the convention is to take $(i/2)\p \dbar \phi = \omega$, while when discussing Toeplitz quantization, the convention is to take $i \p \dbar \phi = \omega$. The latter is more natural in the semiclassical setting (and followed in this paper). This is the reason why the Bergman kernel used in this paper (and all literature discussing Toeplitz quantization) is $2^{-d}$ times the Bergman kernel in papers discussing the Bergman kernel asymptotics (for instance in \cite{BBS}).
\end{rem}

\subsection{An Asymptotic Expansion of the Kernel of a Toeplitz Operator}

In this subsection, we apply Melin and Sj\"ostrand's method of complex stationary phase to obtain an asymptotic expansion of the kernel of Toeplitz operators for functions in $S_\delta (m)$.
 
\begin{theorem}[\textbf{Asymptotic Expansion of Symbols in $\bm{S_\delta (m)}$}]\label{thm.asmptotic}
Suppose $\e> 0 $ is small enough such that if $\dist (x,y) < \e$, then $x$ and $y$ are contained in the same chart. Let $ \Delta_\e = \set{(x,y) \in M \times M : \dist (x,y ) < \e}$. Fixing $\delta \in [0,1/2)$, suppose $m$ is a $\delta$-order function on $X$ with constant $M_0$ in \eqref{order function}. Then if $f\in S_\delta(m)$, there exist $f_j \in C^\infty ( \Delta_\e ; \C)$ ($j\in \Z_{\ge 0}$) such that for all $J\in \N$, in local coordinates $T_{N,f}(x,\bar y)$ is
\begin{align}
\left ( \frac{N}{2\pi}\right ) ^d e^{N\psi (x,\bar y) } \left ( \sum _{j=0}^{J-1} N^{-j} f_j (x,\bar y ) + N^{-J}R_{J} (x,\bar y) \right ) + e^{\frac{N}{2} (\phi(x) + \phi(y) )} \mathcal{O}(N^{-\infty} ) \label{eq.asymptotic}
\end{align}
where:
\begin{align}
R_J (x,\bar y) &\in N^{2\delta J } ( S_\delta (m(x)) \cap S_\delta (m(y)) , \\
f_j(x,\bar y ) &\in N^{2\delta j} \left (  S_\delta (m(x)) \cap S_\delta (m(y)) \right ),\\
\supp f_j (x,\bar y) &\subset \set{\dist(x,y) < C N^{-\delta}},\\
f_0 (x,\bar x) &= f(x),
\end{align}
for some $C > 0$. Moreover, in local coordinates, $f_j (x, y)$ are almost analytic off the totally real submanifold $\set{(x,y ) \in \C^d \times \C^d : y= \bar x}$, and if $f_j'$ is another almost analytic extension agreeing with $f_j$ on the diagonal, then the difference of the two kernels is $\exp(\frac{N}{2} (\phi(x)+ \phi(y) ) \mathcal{O} (N^{-\infty})$. 
\end{theorem}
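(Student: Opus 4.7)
The plan is to write the Schwartz kernel of $T_{N,f}$ as an oscillatory integral over $w\in X$ and apply the Melin--Sj\"ostrand method of complex stationary phase. Using Lemma \ref{lemma.global bergman kernel bound}, the kernel
\begin{align}
T_{N,f}(x,\bar y) = \int_X \Pi_N(x,\bar w)\, f(w)\, \Pi_N(w,\bar y)\, e^{-N\phi(w)}\, \dd\mu(w)
\end{align}
is already $e^{\frac{N}{2}(\phi(x)+\phi(y))}\mathcal{O}(N^{-\infty})$ whenever $\dist(x,y)>\varepsilon$, since the two Bergman kernel factors cannot simultaneously be large. For $(x,y)\in\Delta_\varepsilon$, the Gaussian bound $\|\Pi_N(x,\bar w)\|_{h_N}\lesssim N^d e^{-cN|x-w|^2}$ further localizes the $w$-integral into a single chart containing $x$ and $y$; the polynomial growth $N^{\delta|\alpha|}$ of derivatives of $f$ is easily dominated by the Gaussian for $\delta<1/2$, so the portion of $w$ away from a small neighborhood contributes only $\mathcal{O}(N^{-\infty})$.

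Substituting the BBS expansion \eqref{b_is} into both Bergman kernel factors converts the integral into
\begin{align}
\left(\tfrac{N}{2\pi}\right)^{2d}\int e^{N\Phi(x,w,\bar y)}\, a_N(x,w,\bar y)\, f(w)\, dw,
\end{align}
with phase $\Phi(x,w,\bar y) = \psi(x,\bar w) + \psi(w,\bar y) - \phi(w)$ and classical amplitude $a_N$ built from the $b_i$'s and the Liouville density. By Proposition \ref{thm.extension of symbols}, construct an almost analytic extension $\tilde f$ of $f$ supported in $\{|\Im z|<CN^{-\delta}\}$, and extend $\Phi$ almost analytically in $w$ using the almost analyticity of $\psi$ and $\phi$. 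Deform the real contour of $w$-integration into a complex steepest-descent contour through the critical point of $\Phi$, picking up a $\dbar$-error via Stokes' theorem. The critical point calculation reproduces the Bergman prefactor $e^{N\psi(x,\bar y)}$ (consistent with $T_{N,1}=\Pi_N$), and the Hessian of $\Phi$ is nondegenerate with strictly negative-definite real part, yielding the standard $N^{-d}$ stationary phase gain. The coefficients $f_j$ are then explicit polynomials in derivatives of $\tilde f$, $a_N$, and $\psi$ at the critical point; direct evaluation yields $f_0(x,\bar x)=f(x)$.

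The main obstacle is controlling the growth of the amplitude in $N$: each derivative of $\tilde f$ costs a factor $N^\delta$, while each stationary phase integration gains $N^{-1/2}$, and the hypothesis $\delta<1/2$ guarantees net improvement $N^{\delta-1/2}\to 0$ at every order. The $j$-th term of the expansion involves at most $2j$ derivatives of $\tilde f$, giving $f_j\in N^{2\delta j} S_\delta\bigl(m(x)(1+N^\delta \dist(x,y))^{M_0}\bigr)$, where the $(1+N^\delta \dist(x,y))^{M_0}$ factor comes from comparing $m$ at the critical point to $m(x)$ via \eqref{order function}. The $\dbar$-error of $\tilde f$ is $m(\Re z)(N^\delta|\Im z|)^\infty$, which on a contour at height $\asymp N^{-\delta}$ becomes $\mathcal{O}(N^{-\infty})$ and contributes only to the Schwartz remainder. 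The remainder $R_J$ is bounded by the standard Melin--Sj\"ostrand remainder estimate, with an additional $N^{\delta M_0}$ factor absorbing the order function comparison over the effective $N^{-1/2}$-support of the integrand. Finally, the uniqueness of the $f_j$ modulo almost analytic equivalence is a consequence of the Taylor expansion remark after Proposition \ref{thm:aa e}: two almost analytic extensions agreeing on $\{y=\bar x\}$ differ by $\mathcal{O}(\dist(z,\{y=\bar x\})^\infty)$, which combined with $|e^{N\psi(x,\bar y)}|\le e^{\frac{N}{2}(\phi(x)+\phi(y))}$ yields the stated $e^{\frac{N}{2}(\phi(x)+\phi(y))}\mathcal{O}(N^{-\infty})$ error.
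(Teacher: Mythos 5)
Your outline follows the same route as the paper (localize via off-diagonal decay, insert the BBS expansion, extend almost analytically, deform the contour, expand by stationary phase), but two of the places you wave at are exactly where the real work lies, and your stated justifications there do not hold up. First, the smallness of the Stokes/$\dbar$ error: you claim that the $\dbar$-error of $\tilde f$, of size $m(\Re z)\,(N^\delta|\Im z|)^\infty$, ``on a contour at height $\asymp N^{-\delta}$ becomes $\mathcal{O}(N^{-\infty})$.'' At height $|\Im z|\asymp N^{-\delta}$ one has $N^\delta|\Im z|\asymp 1$, so this factor is not small at all. The correct mechanism is to pair the $\dbar$-bound with the Gaussian decay of the deformed phase, $e^{N\Re\Psi}\le e^{-CN|\Im p|^2}$, so that $|\Im p|^M e^{-CN|\Im p|^2}=\mathcal{O}(N^{-M/2})$ and the leftover $N^{\delta(M+1)}$ (plus the $N^{\delta M_0}$ from the order function) is beaten precisely because $\delta<1/2$. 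The same omission recurs in your final uniqueness argument: the trivial bound $|e^{N\psi(x,\bar y)}|\le e^{\frac{N}{2}(\phi(x)+\phi(y))}$ combined with a difference that is only $\mathcal{O}(|x-y|^\infty)$ does \emph{not} give $\mathcal{O}(N^{-\infty})$; you need the strict Gaussian improvement $\Re\psi(x,\bar y)\le -C|x-y|^2+\tfrac12(\phi(x)+\phi(y))$ of \eqref{eq.gaussian behaviour} to convert powers of $|x-y|$ into powers of $N^{-1/2}$, again using $\delta<1/2$ to absorb the $N^\delta$ losses.

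Second, you assert that the factor $(1+N^\delta d(x,y))^{M_0}$ ``comes from comparing $m$ at the critical point to $m(x)$,'' but you never locate the critical point of the \emph{almost analytically extended} phase, which is the other main difficulty the paper isolates. One must show (as in Lemmas \ref{Lemma:627} and \ref{lemma:p(t)}) that $\tilde p(x,\bar y)=\bigl(\tfrac12(x+y),\tfrac1{2i}(x-y)\bigr)+\mathcal{O}(|x-\bar y|^\infty)$, with $\dbar_x\tilde p,\dbar_y\tilde p=\mathcal{O}(|x-y|^\infty)$; this is what makes the order-function comparison legitimate, what yields the support statement $\supp f_j\subset\{d(x,y)\lesssim N^{-\delta}\}$ (via the support of $\tilde f$ in $\{|\Im z|<CN^{-\delta}\}$ evaluated at $\tilde w(x,\bar y,0)=\tilde p(x,\bar y)$), and what underlies the almost analyticity of the $f_j$ off $\{y=\bar x\}$ — a conclusion of the theorem that your proposal essentially does not argue. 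So while the skeleton is right, the proof as written is missing the two estimates (Gaussian absorption of the $N^\delta$-losses in the error terms, and the location/almost analyticity of the extended critical point) that the paper's Steps 3, 6 and 7 are devoted to.
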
 

\begin{rem}
An alternate asymptotic expansion can be written by bounding $R_J$ in \eqref{eq.asymptotic} and absorbing the $\exp((N/2)(\phi(x) + \phi(y)) \mathcal{O}(N^{-\infty})$ term. That is (with the same quantifiers as in Theorem \ref{thm.asmptotic}) $T_{N,f}(x,\bar y)$ can be written
\begin{align}\begin{split}
\left ( \frac{N}{2\pi} \right )^d e^{N \psi(x,\bar y)} \sum _{j=0}^{J-1} N^{-j}& f_j (x,\bar y) \\
&+ e^{\frac{N}{2} (\phi(x) + \phi(y) ) } \mathcal{O}(N^{ d - J(1 - 2\delta)}\min(m(x),m(y)) ). \end{split} \label{eq:asymp_better}
\end{align}
\end{rem}

The proof follows the method of complex stationary phase, developed by Melin and Sj\"{o}strand in \cite{Melin}, and presented in \cite{Treves} by Tr\`eves. The difficulty is that the amplitude is no longer bounded in $N$, but lives in $S_\delta (m)$ and so any almost analytic extension is slightly weaker than in \cite{Treves}. Careful analysis is required to ensure the stationary phase still provides appropriate remainders.

To avoid reproving the method of complex stationary phase, we use the same notation for variables in \cite{Treves}. Unfortunately, there is no ideal uniform choice of variables. We will have the same variable used for different objects in separate parts of the proof. We begin with $(x,y)$ to denote the argument of the Schwartz kernel. For each $(x,y)$, we get an integral over $w\in \C^d$. We rewrite $w$ in real coordinates, $p$, and replace $(x,y)$ by $t$, to use the same notation as in \cite[Chapter 10]{Treves}. After going through complex stationary phase, we replace $t$ by $(x,y)$. This notational choice is summarized in the following table for the reader's convenience.

\begin{tabular}{|c|c|c|c|c|}
\hline 
variable name & space & first reference & step(s) used & comment \\ 
\hline 
$x,y$ & $\C^d$  & \eqref{eq.pg6} & 1,5-9 & argument of kernel \\ 
\hline 
$w$ & $\C^d$ & \eqref{eq.pg6} & 1,5 & integrated variable \\ 
\hline 
$p$ & $\R^{2d}$ & \eqref{eq:528} & 1 & realifies $w$ \\ 
\hline 
$p$ & $\C^{2d}$ & step 2 & 2-4 & complexifies previous $p$ \\ 
\hline 
$t$ & $\R^{4d}$ & \eqref{eq:529} & 1-5 & realifies $(x,\bar y)$ \\ 
\hline 
${p_{\rm{crit}}}(t) ={p_{\rm{crit}}}(x,\bar y)$ & $\C^{2d}$ & step 2 & 2,4-8 & critical point of $\tilde \Psi$ \\ 
\hline 
$p(x,\bar y ,z) = p (z_0)$ & $\C^{2d}$ & step 2 & 2,3,5,7 & point on new contour \\ 
\hline 
\end{tabular} 

\begin{proof} \textbf{Step 1: Rewrite $\bm{T_{N,f}(x,y)}$ locally in real coordinates} 

The goal is to write $T_{N,f} (x,y)$ for $x$ near $x_0 \in X$ and $y\in X$. We may assume $x_0 \in U_1$ with $\rho_1(x_0) = 0$ (recall $(U_i,\rho_i)$ are charts on $X$). By construction,
\begin{align}\label{eq.pg6} \begin{split}
 T_{N,f} (x,\bar y ) = \int_{U_1} \Pi_{N} (x,\bar w) f(w) &\Pi_N (w,\bar y) e^{-N\phi(w)} \dd\mu (w)  \\
+& \int_{X \setminus U_1} \Pi_{N} (x,\bar w) f(w)\Pi_N (w,\bar y) e^{-N\phi(w)} \dd\mu(w) . \end{split}
\end{align}
By Lemma \ref{lemma.global bergman kernel bound}, the second integral is $\exp(\frac{N}{2} (\phi (x) + \phi(y) ) )\mathcal{O} (N^{-\infty})$. If $y\notin U_1$, then by Lemma \ref{lemma.global bergman kernel bound}, \eqref{eq.pg6} will be $ \exp(\frac{N}{2} (\phi (x) + \phi(y) ) )\mathcal{O} (N^{-\infty})$. 

We now assume that $x,y\in U_1$ which allows us to work locally. For the remainder of the proof (until the last step) all computations are for $x$ and $y$ in this chart. We therefore replace $\rho_1(x)$ and $\rho_1(y)$ by $x$ and $y$ respectively and functions on $X$ are replaced by functions on $\C$ with the same name.

We now rewrite \eqref{eq.pg6} as
\begin{align}
\int_{\rho_1 ( U_1)} e^{N \Phi_{x,\bar y} (w)} g_{x,\bar y} (w) \dd m(w) + e^{ \frac{N}{2} (\phi (x) + \phi (y) ) } \mathcal{O} (N^{-\infty}) \label{eq.first reduction}
\end{align}
where
\begin{align}
\Phi_{x,\bar y}(w) &= \psi(x,\bar w) - \phi(w) + \psi (w,\bar y) ,\\
g_{x,\bar y}(w) &= \left ( \frac{N}{2\pi}\right ) ^{2d} f( w) B(x,\bar w) B(w,\bar y) \mu (w) \label{eq:g1} ,\\
\frac{\omega^{\land d}(w)}{d!} &= \mu(w) \dd m(w).
\end{align}
Here $\dd m(w)$ is the Lebesgue measure on $\C^d$. Note that locally if
\begin{align}
\omega = i \sum _{\ell,m = 1}^d H_{\ell,m} \dd w_\ell \land \dd  \bar w_m
\end{align}
then
\begin{align}
\omega^{\land d} / d! = 2^d \det (H) \dd \Re{w_1} \land \dd \Im{w_1} \land \cdots \land \dd \Re{w_d} \land \dd \Im{w_d}.
\end{align}
Recall that $H = \p \dbar \phi$ which is locally a positive definite matrix. Therefore, locally, $\mu(w) = 2^d \det (\p \dbar \phi(w) )$.
 
For any $M' \in \N$, the projector $\Pi_N$ can be estimated with $M'$ terms as in \eqref{eq:399}. Indeed, if we let $B(x,\bar y) := 1 + \sum _{1}^{M'} b_i (x,\bar y)$, as in \eqref{b_is}, then this introduces error $\mathcal{ O} (N^{2d - 2M' - 2} )\exp(\frac{N}{2} (\phi(x) + \phi(y)))$ which is absorbed into the error term in \eqref{eq.first reduction} as $M'$ can be arbitrarily large.

We next change to real coordinates by setting
\begin{align}
p&:=(\Re w ,\Im w ) \in \R^{2d},\label{eq:528} \\
t &: = (\Re x ,\Im x ,\Re y,- \Im y) \in \R^{4d}\label{eq:529}
\end{align}
 and define
\begin{align}
 \Psi (p,t) &: = \Phi_{x ,\bar y } (w (p) ) - \frac{1}{2} (\phi(x(t)) + \phi(y(t)) ) : \R^{2d} \times \R^{4d} \to \C , \label{eq:def Psi} \\
 g(p,t) &: = g_{x ,\bar y } ( w(p) )\chi (w(p)) : \R^{2d } \times \R^{4d} \to \C \label{eq:g2}
\end{align}
where $\chi\in C_0^\infty (\C^d; [0,1] )$ is identically $1$ near $0$. With this, the first term of \eqref{eq.first reduction} can be written
\begin{align} \label{one part rewritten}\begin{split}
 e ^{ \frac{N}{2} (\phi(x) + \phi(y))} \int _{\R^{2d}} e^{ N \Psi (p,t)} &g(p,t) \dd p  \\ 
 &  + \int_{\rho_1 ( U_1)} e^{N \Phi _{x,\bar y} ( w )} g_{x,\bar y} ( w) (1 - \chi(w) ) \dd m ( w ). \end{split}
\end{align}
By the Taylor expansion of $\Re{\Phi _{x,\bar y} (w)}$ stated in \eqref{eq.gaussian behaviour}, it immediately follows that the second term in \eqref{one part rewritten} is $ \exp(\frac{N} {2} ( \phi(x ) + \phi(y) ) ) \mathcal{O } (N^{-\infty})$.

\smallsection{Summary of step 1} We have observed that the Schwartz kernel of $T_{N,f}$, written $T_{N,f} (x,\bar y)$, is concentrated along the diagonal $y= x$. Near any $x$, we can approximate $T_{N,f}(x,\bar y)$ as an integral over $\R^{2d}$ of the form $\int \exp(N \psi(p,t)) g(p,t) \dd p$. Here $t$ is a function of $x$ and $y$, $g$ is a smooth compactly supported function depending on the symbol $f$, the Bergman kernel, and the density of the volume form on the K\"ahler manifold $X$, and $\Psi$ is a sum of phases appearing in the Bergman kernel. We would now like to apply the method of complex stationary phase to approximate this integral.

\noindent \textbf{Step 2: Deform the contour of the main term} 

Following the method of complex stationary phase presented by Tr\`eves in \cite[Chapter 10]{Treves}, the first term of \eqref{one part rewritten} will be estimated by a contour deformation. Let $\tilde \Psi $ and $\tilde g$ be almost analytic extensions of $\Psi$ and $g$ in the $p$ variable (as described in Propositions \ref{thm:aa e} and \ref{thm.extension of symbols}).

We first observe that there is a unique solution {$p_{\rm{crit} }(t)$} to $\p_p \tilde \Psi (p, t) = 0$ (where $p \in \C^{2d}$) such that the Hessian $\tilde \Psi _{pp} ({p_{\rm{crit} }(t)} , t)$ is invertible with real part negative definite and $\p_p$ is the holomorphic derivative in the $p$ variable. Indeed, by \cite[Proposition 2.2]{Hitrik}, $\Psi(p,0)$ has a unique critical point at $p = (0,0) $ with critical value equal to zero such that the real part of the Hessian is a negative definite matrix. By the implicit function theorem (see \cite[Chapter 10, Lemma 2.3]{Treves} for details), there exists a unique smooth function {$p_{\rm{crit} }(t)$} solving $\p_p \tilde \Psi ({p_{\rm{crit} }(t)}, t) = 0$. In Lemma \ref{lemma:p(t)}, an estimate of {$p_{\rm{crit} }(t)$} is proven.

The desired contour deformation relies on a particular function $q$, which is proven to exist in \cite[Chapter 10, Lemma 3.2]{Treves} and is stated here without proof.

\begin{lemma}\label{lemma.q_lemma}
There exist $U\subset \C^{2d}$, $V\subset \R^{4d}$ open neighborhoods of $0$ and a smooth function $q : U \times V \to \C^{2d}$ such that 
\begin{enumerate}
\item $q({p_{\rm{crit} }(t)}, t) = 0$,
\item for each $t\in V$, $p\mapsto q(p,t)$ is a diffeomorphism from $U$ onto an open neighborhood of zero in $\C^{2d}$,
\item $\tilde \Psi (p,t) - \tilde \Psi ({p_{\rm{crit} }(t)} , t) + \frac{1}{2} q(p,t) \cdot q(p,t) = \mathcal{O} ((\abs{\Im p } + \abs{\Im{{p_{\rm{crit} }(t)}}}) ^\infty ) $,
{\item $\p_p q(0,0) =  A_0$, where $A_0^t A_0 =- \Psi_{pp}(0,0)$.}
\end{enumerate} 
\end{lemma}

Let $q(p,t) = (z_1,\dots ,z_{2d}) = (x_1 + i y_1 ,\dots, x_{2d} + i y_{2d}) $. 
For each $t$, let $\mathcal{U} (t) := q ( \supp g \cap \R^{2d} , t) \subset \C^{2d}$. 
For $t$ close to $0$, there exists a function $\zeta$ such that $\mathcal{U}(t) = \set{x+ iy : y = \zeta (x,t) , x\in \mathcal{U}^\R (t)}$, where $\mathcal{U}^\R(t)$ is the projection of $\mathcal{U}(t)$ onto $\R^{2d}$.
{Indeed, because $\p_p q(0,0) = A_0$, tangent vectors  at the origin of $\mathcal U(0)$ are of the form $u = A_0 u_0$ for $u_0\in \R^{2d}$.
Because the real part of $A_0^t A_0$ is positive definite, $\Re{u \cdot u } \ge  0$.
Therefore if we write $u = v + i w$ (with $v,w$ real), then $|v|^2 \ge |w|^2$, so that $\mathcal{U}(0) = \set{x + i \psi(x) : x \in \mathcal U^\R (0)}$ for some function $\psi$.
For $t$ varying in a small neighborhood of $0$, we get the existence of $\zeta$ such that $\mathcal{U}(t) = \set{x+ i \zeta(x,t) : x\in \mathcal{U}^\R (t)}$.

}
 
For each $s \in [0,1]$, let $\mathcal{U}_s (t) = \set{x + is \zeta (x,t) : x\in \mathcal{U}^\R (t)}$. Define the contour:
 \begin{align}
 U_s(t) = \set{ p \in \C^{2d} : q(p,t) \in \mathcal{U}_s(t)} = q(\cdot , t) ^{-1 } (\mathcal{U}_s (t)).
 \end{align}
To ease notation below, let $z_s = z_s (x):= x + i s \zeta (x,t)$, and $p(z_s) = q(\cdot , t)^{-1} (z_s)$, so that $U_s = \set{ p (z_s (x) ) : x\in \mathcal{U}^\R}$. For a simple example of this contour construction, see Appendix \ref{appendix:treves}. For a schematic drawing of this contour construction, see Figure \ref{fig.1}.

Observe that $\mathcal{U} _1 (t) = \set{x + \zeta (x,t) : x\in \mathcal{U}^\R (t)} = \mathcal{U}(t) = q(\supp g \cap \R^{2 d } ,t )$. So 
\begin{align}
U_1 (t) = \set{p \in \C^{2d} : q(p,t) \in q(\supp g \cap \R^{2d} , t)} = \supp g \cap \R^{2d}.
\end{align}
Because this contains the support of $g$, we may rewrite the first integral in \eqref{one part rewritten}  as
\begin{align}
\int_{\R^{2d}}  e^{ N \Psi(p,t) } g(p,t) \dd p = \int_{U_1(t)} e^{N\tilde \Psi( p ,t)} \tilde g(p,t) \dd p,
\end{align}
which, by Stokes' theorem, is
\begin{align}
\int_{U_0(t)} e^{N\tilde \Psi( p ,t)} \tilde g(p,t) \dd p^1 \land & \cdots \land \dd p^{2d} \\
&+ \int _{W } (\dbar_p  (e^{N\tilde \Psi( p ,t) } \tilde g(p,t))) \land \dd p^1 \land \cdots \land \dd p^{2d}  \label{eq.Stokes Thm}
\end{align}
where $W = \set{ p\in \C^{2d} : q(p,t)\in \mathcal{U}_s (t), s \in [0,1] }$.
\begin{figure}[h!]
\includegraphics[width = \textwidth]{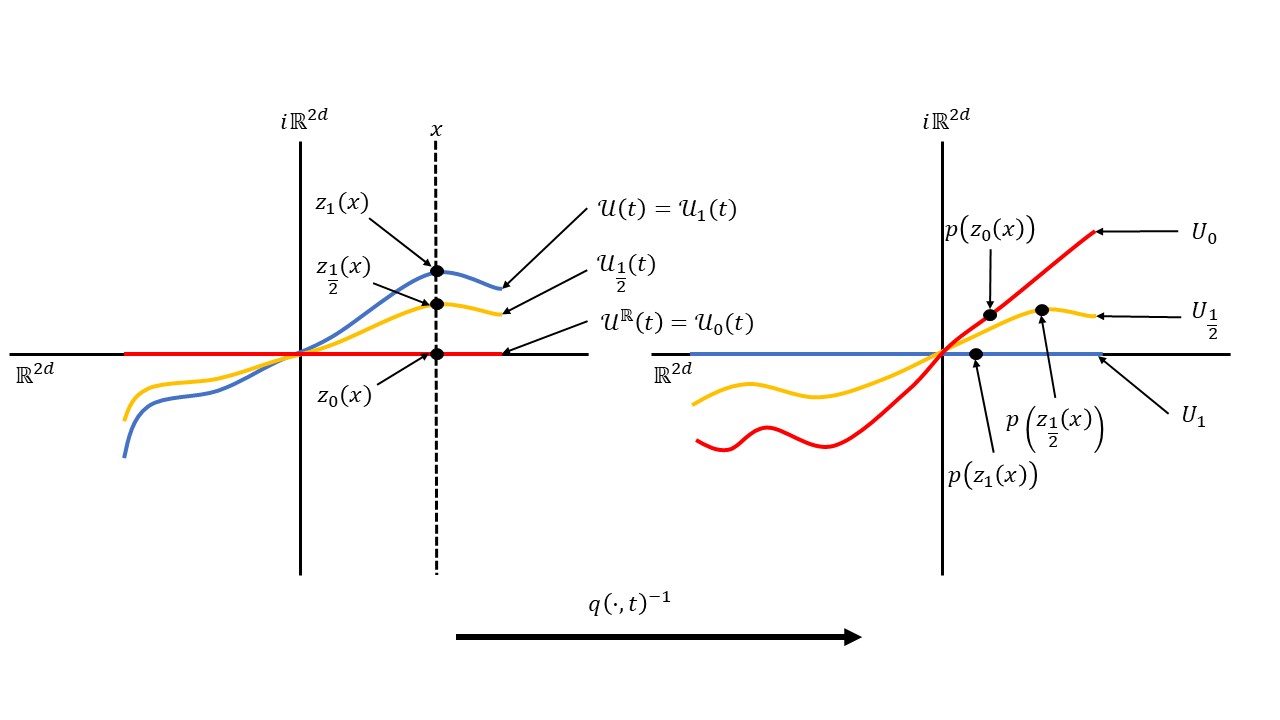}
\caption{Schematic of the contour deformation. \label{fig.1}}
\end{figure}

\smallsection{Summary of step 2} We considered $\int \exp(N \Psi ) g  \dd p $ as an integral over $\R^{2d}$ within $\C^{2d}$ by almost analytically extending $\Psi$ and $g$. We then chose a particular contour deformation and applied Stokes' theorem to rewrite this as two integrals (as in \eqref{eq.Stokes Thm}). We will show that by this choice of contour deformation and almost analytic extensions, the second term in \eqref{eq.Stokes Thm} is negligible.

\noindent \textbf{Step 3: Estimate $\bm{\dbar_p( \exp(N\tilde \Psi)\tilde g )} $} 

To control the second term of \eqref{eq.Stokes Thm}, $\dbar $-estimates for $\tilde \Psi$ and $\tilde g$ are required. Note that $ p \mapsto\Psi (p,t)\in C^\infty(\R^{2d}) $ (with uniform derivative estimates independent of $N$ as in Proposition \ref{thm.extension of symbols}). Therefore $\tilde \Psi$ has the usual $\dbar_p$ estimates (vanishing to infinite order in the imaginary direction). On the other hand, $g$ has growth in $N$ when differentiated.

Here we define $m_\R : \R^{2d} \to \R_{>0 }$ by
\begin{align}
m_\R (p_1,p_2 ) = m \circ \rho_1^{-1} (p_1 + i p_2). \label{eq:mR}
\end{align}
Then, by examining \eqref{eq:g1} and \eqref{eq:g2}, we see that $g(\cdot,t) \in S_\delta (m_\R)$ (prior to almost analytic extension). Therefore, by Proposition \ref{thm.extension of symbols}, for all $M \in \N$ and $p\in \C^{2d}$, we have that
\begin{align}
| \dbar_p \tilde g (p,t) |\lesssim_M N^{\delta - M \delta } \abs{\Im p }^M m_\R (\Re p ) .
 \end{align} 
Therefore:
\begin{align}
|\dbar _p( e^{N \tilde \Psi(p,t)} \tilde g(p,t))  |& = |(\dbar_p \tilde g(p,t) + N \dbar_p \tilde\Psi (p,t)) e ^{N \tilde\Psi(p,t) }| \\
& \lesssim_M e ^{N \Re{ \tilde\Psi(p,t) }} \left ( N ^{\delta - M \delta } m_\R (\Re p ) \abs{\Im p }^M + N \abs{\Im p }^M \right )\\
&\lesssim_M e ^{N\Re {\tilde \Psi(p,t)} } \abs{\Im p } ^M (N^{\delta (M+1)} m_\R(\Re p ) + N) . \label{eq.dbar F first} 
\end{align}
As we are integrating over $W$, we may write $p = p(z_s (x))$ for $x\in \mathcal{U}^\R$ and $s\in[0,1]$. Now a bound of $\exp(N \Re {\tilde \Psi (p(z_s(x)),t) } )$ is required. For this we apply the following Lemma.

\begin{lemma}\label{Lemma 3.4}
If $U$ and $V$ are small enough, there exists a $C > 0$ such that for all $(x,t) \in U\times V$ :
\begin{align}
\Re {\tilde \Psi (p(z_s (x)) ,t) }\le - C | \Im {p(z_s (x) }) |^2.
\end{align}
\end{lemma}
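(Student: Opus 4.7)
The plan is to use the Morse normal form of Lemma \ref{lemma.q_lemma} to reduce $\Re{\Psi}$ on the contour to an explicit algebraic expression, and then combine a linear-algebra computation at $t = 0$ with a perturbation argument in $t$.

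First, I would apply Lemma \ref{lemma.q_lemma}(3) at $p = p(z_s(x))$. Using $q(p(z_s(x)), t) = z_s(x) = x + is\zeta(x,t)$ together with the identity
\begin{align*}
\Re{z_s(x) \cdot z_s(x)} = |x|^2 - s^2|\zeta(x,t)|^2,
\end{align*}
this gives
\begin{align*}
\Re{\Psi(p(z_s(x)), t)} = \Re{\Psi(\tilde p(t), t)} - \tfrac{1}{2}|x|^2 + \tfrac{s^2}{2}|\zeta(x,t)|^2 + R,
\end{align*}
where $R = \mathcal{O}\bigl((|\Im{p(z_s(x))}| + |\Im{\tilde p(t)}|)^\infty\bigr)$. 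A standard stationary-phase identification matches $\Psi(\tilde p(t), t)$ with $\psi(x, \bar y) - \tfrac{1}{2}(\phi(x) + \phi(y))$ modulo $\mathcal{O}(|\Im{\tilde p(t)}|^\infty)$, and hence by \eqref{eq.gaussian behaviour} one gets $\Re{\Psi(\tilde p(t), t)} \le -c|x-y|^2 + \mathcal{O}(|\Im{\tilde p(t)}|^\infty) \le 0$.

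The crux is then a linear-algebra computation at $t = 0$. By \citep[Proposition 2.2]{Hitrik}, the Hessian $\Psi_{pp}(0,0) =: A$ has positive-definite real part, and $q(\cdot, 0)$ is to leading order the associated Morse diagonalizer, so $p(z_s(x)) = q(\cdot,0)^{-1}(x + is\zeta(x,0))$ is a smooth (complex) function of $x, s$. Since $p(z_1(x))$ is real by construction (so $\Im{p(z_1(x))} = 0$) while $\Im{p(z_0(x))}$ is proportional to $x$, the two quadratic forms $|\Im{p(z_s(x))}|^2$ and $-\tfrac{1}{2}|x|^2 + \tfrac{s^2}{2}|\zeta(x,0)|^2$ are explicitly computable in terms of $A$, and the desired inequality reduces to the positivity of a quadratic form whose positivity is governed by $\Re A \succ 0$. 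The perturbation to $t \ne 0$ then proceeds by shrinking $V$: smooth dependence of $q$, $\zeta$, and $\tilde p$ on $t$, together with the non-positivity of $\Re{\Psi(\tilde p(t), t)}$ established above, propagates the bound to all of $U \times V$ with a slightly smaller constant, and the infinite-order remainder $R$ is absorbed by the dominant negative quadratic term.

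The main obstacle is the linear-algebra step at $t = 0$: one must show the comparison $|\Im{p(z_s(x))}|^2 \asymp (1-s)^2|x|^2$ with sharp enough constants and then read off the sign-definite inequality from the Morse data. The bookkeeping is delicate because the bound must be compatible with the degenerate endpoint $s = 1$, where $|\Im p| = 0$ and $\Re{\Psi} \le 0$ holds only with equality of order $|x|^2$, as opposed to the nondegenerate endpoint $s = 0$, where $|\Im p|^2 \asymp |x|^2$ and the full quadratic decay of $\Re{\Psi}$ is needed.
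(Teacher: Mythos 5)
Your skeleton---reduce via Lemma \ref{lemma.q_lemma}(3) to the explicit expression $\Re{\Psi(\tilde p(t),t)} - \tfrac12|x|^2 + \tfrac{s^2}{2}|\zeta(x,t)|^2$ plus an infinite-order remainder, then argue quadratically---is indeed the right one (it is how Melin--Sj\"ostrand/Treves proceed; the paper itself simply cites \citep[Lemma 3.3]{Treves}). But the quantitative heart of your plan is wrong or missing. The claimed comparison $|\Im{p(z_s(x))}|^2 \asymp (1-s)^2|x|^2$ is false: at $x=0$, $s=0$ one has $p(z_0(0)) = q(\cdot,t)^{-1}(0) = \tilde p(t)$, so the left side equals $|\Im{\tilde p(t)}|^2 \neq 0$ for generic $t$ while the right side vanishes; conversely at $t=0$, where $\tilde p(0)=0$ is real and $q(\cdot,0)$ is real to leading order, $\Im{p(z_0(x))}$ is not ``proportional to $x$'' but of higher order. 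The correct bookkeeping must carry $|\Im{\tilde p(t)}|$ alongside $|x|$: one needs estimates of the shape (i) $|\zeta(x,t)| \le k|x| + C|\Im{\tilde p(t)}|$ with $k<1$, (ii) $|\Im{p(z_s(x))}| \lesssim (1-s)|\zeta(x,t)|$ (using that $p(z_1(x))$ is real and $z_1-z_s = i(1-s)\zeta$), and, crucially, (iii) the critical-value bound $\Re{\Psi(\tilde p(t),t)} \le -c\,|\Im{\tilde p(t)}|^2$, all with compatible constants. Point (iii) is not optional: putting $x=0$, $s=0$ in the lemma shows the assertion \emph{is} exactly (iii), whereas your argument only retains $\Re{\Psi(\tilde p(t),t)} \le 0$. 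Without it, neither the $|\Im{\tilde p(t)}|$-contributions to $\zeta$ and to $\Im{p(z_s(x))}$ nor the remainder $R=\mathcal{O}\bigl((|\Im{p}|+|\Im{\tilde p(t)}|)^\infty\bigr)$ (which is not $\mathcal{O}(|x|^\infty)$) can be absorbed, since the only negative term you keep, $-\tfrac12|x|^2$, does not control them.

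Two further soft spots. Identifying $\Psi(\tilde p(t),t)$ with $\psi(x,\bar y)-\tfrac12(\phi(x)+\phi(y))$ ``by standard stationary-phase identification'' is circular at this stage of the paper: that identification appears in Step 7, downstream of the expansion whose proof needs the present lemma. It can be repaired (the critical value is almost analytic in $t$ off $\{y=\bar x\}$ and agrees there with $\psi(x,\bar y)-\tfrac12(\phi(x)+\phi(y))$, so they differ by $\mathcal{O}(|x-y|^\infty)$; then \eqref{eq.gaussian behaviour} together with $|\Im{\tilde p(t)}| \lesssim |x-y|$ actually yields (iii)), but you discard exactly this quantitative information when you weaken the bound to $\le 0$. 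Finally, the closing ``perturb in $t$ with a slightly smaller constant'' is not a valid step here: at the base point $t=0$ the right-hand side of the inequality degenerates ($|\Im{p(z_s(x))}|$ is essentially zero for all $x,s$), so continuity/compactness does not propagate a strict inequality whose right-hand side vanishes identically; one needs the uniform estimates (i)--(iii) jointly in $(x,t,s)$, which is precisely what the cited lemma of Treves supplies.
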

\begin{proof}
This is an upgraded version of \cite[Chapter 10, Lemma 3.2]{Treves}, which states that for all $s \in [0,1]$, there exists $C'>0$ such that
\begin{align}
\Re {\tilde \Psi (p(z_s (x)) ,t) }\le - C' (1-s)^{-1}| \Im {p(z_s (x) }) |^2- C' \abs{\Im{{p_{\rm{crit}}}(t)}}^2. \label{eq:601}
\end{align}
Observe that for $s=0$, $p(z_s(x)) \in \R^{2d}$. Then because $\Re { \tilde \Psi}$ has a unique critical point with negative definite Hessian, we see that for $s$ and $x$ near zero, there exists a $C '' >0$ such that $\Re{ \tilde \Psi (p(z_s (x))),t} \le -C'' |p(z_s(x)|^2\le  - C'' \abs{ \Im{p(z_s(x))}}^2$. We can combine this with \cite[Chapter 10, Lemma 3.2]{Treves} to get Lemma \ref{Lemma 3.4}.
\end{proof}

Using this lemma, we see that
\begin{align}
|\dbar_p (e^{N  \tilde\Psi(p,t)} g(p,t)) | &\lesssim_M e^{-CN \abs{\Im {p} }^2} \abs{\Im p }^M (N ^{\delta (M+1)} m_\R (\Re p ) + N ) \\
& \lesssim_{M,M'} N^{-M'} \abs{\Im{p}}^{M- 2M'} \max (N^{\delta (M+1 + M_0)} , N) 
\end{align}
for any $M'\in \N$. Here we used that $m_\R(x) \lesssim N^{\delta M_0 }$ and $ p $ is bounded on the region we are integrating. Let $M = 2M'$ so that
\begin{align}
|\dbar_p (e^{N \tilde\Psi(p,t)} g(p,t)) | \lesssim _M \max(N^{\delta ( M_0 +1) - M(1/2 - \delta)}, N^{1 - M/2}) = \mathcal{O}(N^{-\infty})
\end{align}
because $\delta < 1/2$, and $M$ can be made arbitrarily large.

\smallsection{Summary of step 3} In this step, we proved that the second term of \eqref{eq.Stokes Thm} is $\mathcal{O}(N^{-\infty})$. This involved estimating $\dbar_p$ applied to the integrand. This was controlled by Propositions \ref{thm:aa e} and \ref{thm.extension of symbols}. Because $\tilde g$ is a function of our symbol (so its derivatives are unbounded in $N$), these $\dbar_p$-estimates are weaker than the $\dbar_p$-estimates on $\tilde \Psi$. Fortunately, by the choice of contour deformation, on the domain of integration the phase behaves like a Gaussian, and destroys all temperate growth in the $\dbar_p$-estimates.

\noindent \textbf{Step 4: Reduction to quadratic phase}

We now compute the first term in \eqref{eq.Stokes Thm}. Define
\begin{align}
J(N): = \int_{U_0} e^{N  \tilde \Psi( p ,t)}  \tilde g(p,t) \dd p^1 \land \cdots \land \dd p^{2d}. \label{eq:628}
\end{align}
First change variables to integrate over $x\in \R^{2d}$:
\begin{align}
J(N) = \int _{\mathcal{U}^\R} e^{ N \tilde \Psi (p(x),t)}  \tilde g(p(x),t) \left (\frac{\p p}{\p x} \right ) \dd x.
\end{align}
Next, Taylor expand the phase about the critical point and interpolate the remainder. Define
\begin{align}
iR(x,t)  &: =  \tilde\Psi(p(x),t) - \tilde\Psi({p_{\rm{crit}}}(t) , t) + |x|^2/2,\\
\Psi_s (x,t)& := \tilde \Psi({p_{\rm{crit}}}(t),t) - |x|^2 /2 + i sR(x,t),\\
h(x) &: =  \tilde g(p(x),t) (\p p / \p x). \label{eq:638}
\end{align} 
Note that $\Psi_1(x,t) = \tilde \Psi(p(x),t)$ and $\Psi_ 0 (x,t)= \tilde \Psi({p_{\rm{crit}}} (t),t) - |x|^2/2$. We would like to prove that $J(N)$ can be estimated using the $\Psi_0$ phase with $\mathcal{O} (N^{-\infty})$ error. 

Using that
\begin{align}
\int_0^1 N R(x,t) e^{ N \Psi _s(x,t) }ds = e^{N\Psi_1(x, t) } - e^{ N \Psi _0 (x, t) },
\end{align}
we get that
\begin{align}
\left |\int _{\mathcal{U}^\R } \left ( e ^{ N \Psi_1(x,t) } - e ^{ N \Psi_0(p(x),t) } \right ) h (x) \dd x \right | = \left | \int_0^1 \int _{\mathcal{U}^\R} h(x) N R(x,t) e^{ N \Psi_s (x,t)}\dd x \dd s \right | \\
\le N \int_0^1 \int _{\mathcal{U}^\R} |h(x) R(x,t) | e^{N \Re {\Psi_s (x,t)}} \dd x \dd s. \qquad \label{eq.pg 9}
 \end{align}

We can control $R(x,t)$ by the following Lemma presented in \cite[Chapter 10, Lemma 3.2]{Treves}.

 \begin{lemma}
For all $x\in \mathcal{U}^\R(t)$, $t$ near $0$, $M >0$, there exist $C_M > 0$ such that $|R (x,t) | \le C _M (\abs{\Im { p(x) } } + \abs{\Im { {p_{\rm{crit} }(t)} } })^M $.
 \end{lemma}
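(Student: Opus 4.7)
The estimate is essentially a direct consequence of item (3) of Lemma \ref{lemma.q_lemma}, once we identify the correct substitution. Recall that $R(x,t)$ is defined by
\begin{align}
iR(x,t) = \Psi(p(x),t) - \Psi(\tilde p(t),t) + \tfrac{1}{2}|x|^2,
\end{align}
where the change of variables in the computation of $J(N)$ is on the undeformed contour $U_0$, so that $p(x) = q(\cdot,t)^{-1}(x)$ for $x \in \mathcal{U}^\R(t) \subset \R^{2d}$. The plan is to simply substitute $p = p(x)$ into the conclusion of Lemma \ref{lemma.q_lemma}(3).

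\textbf{Execution.} By construction $q(p(x),t) = x$, and because $x$ is real, the bilinear form $q(p(x),t) \cdot q(p(x),t) = |x|^2$ (with no conjugation). Therefore item (3) of Lemma \ref{lemma.q_lemma}, applied at $p = p(x)$, reads
\begin{align}
\Psi(p(x),t) - \Psi(\tilde p(t),t) + \tfrac{1}{2}|x|^2 = \mathcal{O}\!\left((|\Im p(x)| + |\Im \tilde p(t)|)^\infty\right),
\end{align}
which is exactly $iR(x,t)$. Multiplying by $-i$ (a unimodular constant that does not affect the estimate) gives, for every $M$, a constant $C_M > 0$ (uniform for $t$ in a small neighborhood of $0$) such that
\begin{align}
|R(x,t)| \le C_M \bigl(|\Im p(x)| + |\Im \tilde p(t)|\bigr)^M.
\end{align}

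\textbf{Remarks on obstacles.} There is no real obstacle here, only bookkeeping: one must be careful that $p(x)$ refers to the inverse of $q(\cdot,t)$ evaluated at the real point $x$ (arising from Stokes' theorem on $U_0$), rather than the deformed contour $U_s$; with that interpretation, $q(p(x),t) = x$ is real and the identity collapses cleanly onto Lemma \ref{lemma.q_lemma}(3). The only substantive input is the existence and properties of $q$ proved in Lemma \ref{lemma.q_lemma}, which is quoted from \cite{Treves}. Since neither $\Psi$ nor $q$ depends on $N$, no $N$-dependent bookkeeping is needed at this step; the $N$-growth will enter only when this estimate is combined with the $N$-dependent bounds on $h(x) = g(p(x))(\partial p/\partial x)$ in the subsequent remainder analysis.
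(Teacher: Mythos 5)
Your proof is correct and is essentially the paper's own: the paper simply cites \citep[Lemma 3.2]{Treves}, the same source as Lemma \ref{lemma.q_lemma}, since the bound on $R$ is exactly what one gets by evaluating item (3) of that lemma at $p = p(x)$ on the undeformed contour, where $q(p(x),t) = x$ is real so that $q \cdot q = |x|^2$. Your bookkeeping about $p(x) = q(\cdot,t)^{-1}(x)$ and the $N$-independence of this step matches the paper's usage.
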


Using this, and $\Re{ \tilde \Psi (p(x), t)} \le - C (\abs{\Im {{p_{\rm{crit} }(t)} } } ^2 + \abs{\Im { p(x) } }^2)$ (by \eqref{eq:601}), we see that for all $M \in \N$, there exists $C_M > 0 $, such that
\begin{align}
    |R(x,t) | \le -C_M\left  (\Re{\tilde \Psi (p(x) ,t) }\right )^M.
\end{align}

By expanding $\Psi_s(x,t)$, 
\begin{align}
 \Re{\Psi_s (x,t)} &= (1 - s) \Re{\tilde \Psi ({p_{\rm{crit} }(t)} ,t) } + (1-s ) \left ( \frac{-|x|^2}{2} \right ) + s \Re{\tilde \Psi (p(x),t) } \\
 &\le s \Re{\tilde \Psi (p(x),t) }.
 \end{align} 
Then, since $|h(x) | \le C  N ^{\delta M_0}$ (for some $C>0$), \eqref{eq.pg 9} is bounded by

\begin{align}
 C N^{\delta M_0+1} \Bigg ( \int _{\mathcal{U}^\R}  \Bigg ( \int_0^{1/2} \Big ( &-\Re{ \tilde\Psi ( p(x), t)}\Big) e^{\frac{-N}{4} |x|^2} \dd s   \\
  &+ \int_\frac{1}{2} ^{1} e^{s N \Re{\tilde \Psi (p(x) ,t ) } } \abs{\Re{\tilde \Psi (p(x) ,t )}}^M \dd s \Bigg ) \dd x \Bigg ).
\end{align}
Because $\mathcal{U}^\R$ is bounded, and $\Psi$ is bounded, both terms are $\mathcal{O} (N^{-\infty})$.

Therefore:
\begin{align}
J(N) = e^{N \tilde \Psi ({p_{\rm{crit} }(t)},t)} \int_{\mathcal{U}^\R} e^{-N|x|^2/2} g(p(x),t) \left ( \frac{\p p }{\p  x} \right ) \dd x + \mathcal{O} (N^{-\infty}) \label{step 4 result}
\end{align}

\smallsection{Summary of step 4} This step proceeded identically to \cite[Chapter 10]{Treves}. We began with our integral on the constructed contour $U_0$ in $\C^{2d}$ \eqref{eq:628}. We changed variables to integrate over the real variable $x\in \R^{2d}$. We proved that this integral can be approximated by replacing the phase by the critical value of the phase minus a quadratic term. This is to set us up to apply the saddle-point method (also called real stationary phase).

\noindent \textbf{Step 5: Apply the saddle-point method}

By the saddle-point method (see for instance \cite[Exercise 2.4]{Grigis}) for each $J \in \N$ we can now rewrite \eqref{step 4 result} as
\begin{align}
e^{N \tilde \Psi ({p_{\rm{crit} }(t)},t)} &\int_{\R^{2d}} e^{-N |x|^2/ 2} h(x)\dd x + \mathcal{O }(N^{-\infty}) \\
&=e^{N \tilde \Psi({p_{\rm{crit} }(t)},t)} \left (\frac{2\pi }{N} \right )^{d} \left ( \sum _{j=0}^{J-1 }\frac{1 }{ N^j j! 2^j} \Delta^j h (0) +N^{-J}R_J(t) \right ) \label{remaining part}
\end{align}
with error bound
\begin{align}
|R_J (t) | \lesssim_J  \sum _{|\alpha | = 2(J+1)} \sup _{x \in \R^{2d}}| \p ^\alpha h(x)| ,\label{eq.remainder term}
\end{align}
where $h$ is defined in \eqref{eq:638}.

We now have to unravel all the definitions of the functions in \eqref{remaining part}. First, replace $t$ by $(x,\bar y)$. From the first four steps, we have shown that for $x,y$ near zero, in local coordinates, $\exp(-(N/2)(\phi(x) + \phi(y)) T_{N,f}(x,\bar y)$ is
\begin{align}
e^{ N\tilde \Psi ({p_{\rm{crit}}}(x,\bar y),x, \bar y)} \left ( \frac{N}{2\pi} \right )^d \left ( \sum_{j=0}^{J-1} N^{-j} f_j (x,\bar y) + N^{-J}R_J(x,\bar y) \right ) + \mathcal{O} (N^{-\infty}),\label{eq:623}
\end{align}
for each $J \in \N$, where:
\begin{align}
f_j (x,\bar y) = ( j! 2^{j})^{-1} \Delta _z ^j h(x,\bar y, 0)\label{eq.fj}
\end{align}
with
\begin{align}
 h(x,\bar y, z) = \tilde f(p(x,\bar y,z) ) \tilde g_2 (x,\bar y, p (x,\bar y, z) ) \det \left ( \frac{\p p (x,\bar y, z)}{\p z} \right ) .\label{eq.h definition}
 \end{align} 
 
Here we are defining $p (x,\bar y,\cdot) := q^{-1}(\cdot, x,\bar y  ) $ (with $q$ the change of variables defined in Lemma \ref{lemma.q_lemma}). As usual, the derivatives of the terms in the stationary phase expansion are evaluated at the critical point of the (almost analytically extended) phase. Indeed,
\begin{align}
p(x,\bar y, 0 ) = q^{-1}(\cdot, x,\bar y ) (0) = {p_{\rm{crit}}} (x, \bar y)
\end{align}
by the first property of $q$ in Lemma \ref{lemma.q_lemma}.

Recall that $x,y\in \C^d$, $z \in \R^{2d}$, $p\in \C^{2d}$, and $\tilde g_2$ is an almost analytic extension of $g$ defined in the following way. We let
\begin{align}
g_2 (w,x,\bar y) := B(x,\bar w)B(w,\bar y) \det (\p \dbar \phi (w) )\chi(w) :  \C^d _w \times \C^d_x \times \C^d_y \to \C
\end{align}
where $B(\cdot,\cdot)$ comes from the Bergman kernel expansion, $\phi$ is the K\"ahler potential and $\chi$ is a smooth cut-off function. Then we let $ p = (\Re w, \Im w)\in \R^{2d}$, and define
\begin{align}
(g_2)_\R  (p,x,\bar y) : = g_2 (w(p),x,\bar y) : \R^{2d}_p\times \C^d_x\times \C^d_y\to \C
\end{align}
and finally let $\tilde g_2$ be the almost analytic extension of $(g_2)_\R$ in the $p$ variable.

By the support property of almost analytic extensions, we can choose an $\e > 0 $ such that $\tilde g_2 (x,\bar y, p (x,\bar y , z)) = 0$ if $|p (x,\bar y ,  z)|  > \e$.

Also observe that when taking derivatives of $h$ with respect to $z$, everything is uniformly bounded in $N$, except when derivatives fall on $\tilde f (p) $. Therefore by Proposition \ref{thm.extension of symbols}, for any $\alpha\in \N^{4d}$ and $j\in  \Z_{\ge 0}$:
\begin{align}
|(\p^{\alpha}_{x,\bar x,y,\bar y} f_j )(x,\bar y) | \lesssim_{\alpha,j} N^{2\delta j + |\alpha | }  m_\R (\Re{{p_{\rm{crit}}}(x,\bar y) }), \label{eq:646}
\end{align}
with $m_\R$ defined in \eqref{eq:mR}.

\smallsection{Summary of step 5} In this step we applied the saddle-point method to obtain an asymptotic expansion of the Schwartz kernel of $T_N f$. To show that this asymptotic expansion makes sense ($f_j$'s belong to appropriate symbol classes and the remainder is controlled) we have to compute derivatives of the terms in the expansion. These terms are almost analytic extensions of functions whose derivatives are unbounded in $N$. However, we can see that they are bounded by powers of $N$ times the order function evaluated at the critical point of the phase, ${p_{\rm{crit}}}(x,\bar y)$. We must now estimate ${p_{\rm{crit}}}(x,\bar y)$ (this will also be used in estimating $\exp(N \tilde \Psi({p_{\rm{crit}}}(x,\bar y), x, \bar y)$).

\noindent \textbf{Step 6: Estimate critical value of phase}

Recall that for each $x,y\in \C^d$ near $0$, ${p_{\rm{crit}}}(x,\bar y)$ is the unique $p \in \C^{2d}$ such that
\begin{align}
\p_p \tilde \Psi(p,x,\bar y ) = 0
\end{align}
where $\p_p$ is the holomorphic derivative in the $p$ variable, and $\tilde \Psi(p,x,\bar y)$ is an almost analytic extension of $\Psi(p,x,\bar y)$ (defined in \eqref{eq:def Psi}) in the $p$ variable.

The goal is to show that ${p_{\rm{crit}}}(x, y)$ is 
\begin{align}
\frac{1}{2} ((x+ y), -i(x-y)) + \mathcal{O}(|x- \bar y|^\infty ) .\label{eq:734}
\end{align}
This would follow immediately, with no error, if we considered real-analytic K\"ahler potentials $\phi$.  We will show that ${p_{\rm{crit}}}(x,y)$ is almost analytic off of $y = \bar x$, and coincides with $\frac{1}{2} ((x+ y), -i(x-y)) $ on $y = \bar x$. By uniqueness of almost analytic extensions (modulo appropriate error), this will imply that ${p_{\rm{crit}}}(x,y)$ is \eqref{eq:734}.
 
\begin{lemma}\label{Lemma:627}
If $|x-y|$ is sufficiently small, there exists a constant $C>0$ such that:
\begin{align}
|{p_{\rm{crit},1}} (x,\bar y) + i {p_{\rm{crit},2}}(x,\bar y) - x | \le C |x - y | ,
\end{align}
where ${p_{\rm{crit}}} = ({p_{\rm{crit},1}},{p_{\rm{crit},2}}) \in \C^d \times \C^d$.
\end{lemma}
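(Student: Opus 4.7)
The plan is to reduce the Lipschitz bound to a Taylor expansion about the diagonal $y=x$, after identifying the critical point in that case explicitly.

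First I would verify that when $y = x$, the phase
\begin{align}
\Phi_{x,\bar x}(w) = \psi(x,\bar w) - \phi(w) + \psi(w,\bar x)
\end{align}
has a critical point at $w = x$, in both $\p_w$ and $\p_{\bar w}$. Since $\psi$ is almost analytic with $\psi(x,\bar x) = \phi(x)$, one has $\p_w \psi(w,\bar x)|_{w=x} = \p\phi(x)$ and $\p_{\bar w}\psi(x,\bar w)|_{w=x} = \dbar\phi(x)$, so $\p_w \Phi_{x,\bar x}$ and $\p_{\bar w}\Phi_{x,\bar x}$ both vanish at $w = x$. Translated to the real coordinates $p = (\Re{w}, \Im{w})$, this says the two real equations $\p_{p_1}\Psi = \p_{p_2}\Psi = 0$ are satisfied at $p = (\Re{x}, \Im{x})$. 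By the uniqueness of $\tilde p(t)$ from the implicit function theorem, $\tilde p(x,\bar x) = (\Re{x}, \Im{x})$, whence $\tilde p_1(x,\bar x) + i\tilde p_2(x,\bar x) = x$.

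For the second step, note that the phase $\tilde \Psi$ is $N$-independent, so the extension $\tilde p(x,\bar y)$ produced by the implicit function theorem is smooth in $(x,\bar y)$ with derivatives bounded uniformly in $N$ on a fixed neighborhood of the diagonal. Setting $F(x,y) := \tilde p_1(x,\bar y) + i\tilde p_2(x,\bar y)$, the previous paragraph gives $F(x,x) = x$, and applying the fundamental theorem of calculus along the segment from $(x,x)$ to $(x,y)$ yields
\begin{align}
|F(x,y) - x| = |F(x,y) - F(x,x)| \le C |x - y|,
\end{align}
where $C$ is a uniform bound for the $y$-derivatives of $F$ on the neighborhood in question.

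The main obstacle is the first step: matching the complex critical-point problem in $w\in\C^d$ to the real critical-point problem in $p\in\R^{2d}$ when $y = x$. The calculation itself is short, but one must verify that \emph{both} $\p_w$ and $\p_{\bar w}$ of $\Phi_{x,\bar x}$ vanish at $w = x$ in order to conclude that the real equations $\p_{p_1}\Psi = \p_{p_2}\Psi = 0$ hold simultaneously there; once this is established, the Lipschitz estimate drops out from the smoothness and $N$-independence of $\tilde p$ alone.
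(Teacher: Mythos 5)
Your proposal is correct and follows essentially the same route as the paper: reduce to showing $\tilde p(x,\bar x) = (\Re x , \Im x)$ (the paper quotes the on-diagonal critical point of the unextended phase rather than recomputing it from $\psi$ as you do), invoke uniqueness of the complex critical point from the implicit function theorem, and finish by Taylor expansion/fundamental theorem of calculus in $y$. The one step the paper spells out that you leave implicit is the bridge from the real equations to the complexified one: at a real point $\dbar_p \tilde \Psi = 0$, so the holomorphic derivative $\p_p \tilde \Psi$ coincides there with the real derivative of $\Psi$, which is exactly what allows the uniqueness of $\tilde p$ to identify it with $(\Re x, \Im x)$.
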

\begin{proof}
By Taylor expansion, it suffices to show that ${p_{\rm{crit}}}(x,\bar x) =(\Re x , \Im x )$. Before extension, $\p_p \Psi (p_1,p_2, x,\bar x) = 0$ has the unique solution $p_1 = \Re x, p_2 = \Im x$. Observe that for any $k \in C_0^\infty(\R; \C)$ with an almost analytic extension $\tilde k$, and any $z_0 \in \C$ with $\Im {z_0} = 0 $,   
\begin{align}
 (\p _z \tilde k)(z_0) = (\p_{\Re z} \tilde k )(z_0) = (\p  k )(\Re{z_0})
 \end{align} 
by Theorem \ref{thm:aa e}.

Applying this observation to $\tilde \Psi$ and letting $\tilde p_1$ be the extension of $p_1$ from $\R^d$ to $\C^d$, we see that
\begin{align}
\frac{1}{2} \p _{\tilde p _1} \tilde \Psi (\Re x, \Im x, x,\bar x )&= \p_{\Re{ \tilde p_1}} \tilde \Psi (\Re x, \Im x,x,\bar x)\\
& = \p_{p_1} \tilde \Psi (\Re x , \Im x , x,\bar x) = 0
\end{align}
and similarly for $\p _{\tilde p_2}$. Because ${p_{\rm{crit}}}$ is unique, the claim is proven.
\end{proof}

\begin{lemma}\label{lemma.confusing} If $|x-y|$ is sufficiently small, then
\begin{align}
    \dbar _x {p_{\rm{crit}}}(x,y) = \mathcal{O} (|x-y|^\infty) && \text{and} && \dbar _y {p_{\rm{crit}}} (x,y) = \mathcal{O} (|x-y|^\infty).
\end{align}
\end{lemma}

Before proving Lemma \ref{lemma.confusing}, we give some brief remarks. Proving this lemma is relatively confusing partially due to non-optimal notation (however we try to present a proof as clearly as possible). The difficulty is that we require $\dbar$ estimate of an almost analytic extension of a function that has been almost analytically extended. 

\begin{figure}[h!]
\[\begin{tikzcd}
	& {\tilde \Psi( p,x,\bar y)} \\
	\\
	& {\Psi(p,x,\bar y)} \\
	{\psi(x,\bar y)-\frac 12(\phi(x)+\phi(y))} && {\psi(x,\bar y)-\frac12(\phi(x)+\phi(y))} \\
	& 0
	\arrow["{p\in \R^{2d}}"', from=1-2, to=3-2]
	\arrow["{p=(\Re x, \Im x)}"', from=3-2, to=4-1]
	\arrow["{x=y}"', from=4-1, to=5-2]
	\arrow["{x=y}", from=4-3, to=5-2]
	\arrow["{p=(\Re y , \Im y)}", from=3-2, to=4-3]
\end{tikzcd}\]
\caption{Diagram of the restrictions of $\tilde \Psi$ to totally real vector spaces. First, we have $\tilde \Psi(p,x,\bar y) \in C^\infty (\C^{2d}_p \times \C^d_x \times \C^d_y)$ . This is almost analytic off of $p\in \R^{2d}$, whose restriction to $p\in \R^{2d}$ is $\Psi (p,x,\bar y) \in  C^\infty (\R^{2d}_p \times \C^d_x \times \C^d_y)$. We can restrict $\Psi$ to either $p = (\Re x, \Im x)$ or $p = (\Re y, \Im y)$ to get the same function as shown. When either of these functions are restricted to $x = y$ we get the zero function. Understanding various $\dbar$ estimates on $\tilde \Psi$ is the core part of this step of the proof. \label{fig.2}}
\end{figure}

The core of the proof is to show that various $\dbar$ estimates of $\tilde \Psi$ rapidly decay as $|x-\bar y|$ goes to zero. Recall the construction of $\tilde \Psi$. We began with $\phi$ (the K\"ahler potential), then we almost analytically extended it to $\psi$ such that $\psi(x,\bar x) = \phi(x)$, then we defined $\Psi(p,x,\bar y) = \psi(x,\bar p) - \phi (p) + \psi(p,\bar y) - \frac{1}{2} (\phi(x) + \phi (y) ) $, then we almost analytically extended this in the $p$ variable. See Figure \ref{fig.2} for a schematic diagram of these extensions.

One example to keep in mind is the K\"ahler manifold $\C$ with symplectic form $\omega = i \dd x \land \dd \bar x$ with K\"ahler potential $\phi(x) = |x|^2$. In this case $\psi(x,\bar y)  = x\bar y$ (which is unique), so that $\Psi(p,x,\bar y) = x\bar p - |p|^2 +p\bar y - \frac{1}{2} (|x|^2 + |y|^2)$. The (unique) almost analytic extension of this in the $p$ variable is $\tilde \Psi ( p ,x,\bar y) = x(p_1 - i p_2)  - p_1^2 - p_2^2 + (p_1 + i p_2) \bar y - \frac{1}{2} (|x|^2 + |y|^2)$ (where $p = (p_1,p_2) \in \C^d \times \C^d$).

The key properties of $\tilde \Psi$ to use is that $\tilde \Psi(p,x,\bar y) = \Psi(p,x,\bar y)$ when $p\in \R^{2d}$ and $\psi(x,\bar x) = \phi(x)$ (ie the extensions agree on certain totally real vector spaces). By Taylor expanding from these totally real vector spaces, we prove Lemma \ref{lemma.confusing}.

\begin{proof}
Recall the chain rule for holomorphic differentiation:
\begin{align}
\dbar _z (f(g(z)) = (\p_z f)(g(z)) \cdot (\dbar_z  g)(z) + (\dbar _z  f)(g(z)) \cdot (\dbar_z \bar g )(z)
\end{align}
for arbitrary $f,g\in C^\infty (\C)$.

We can use this when computing $\dbar _{x_j} \p_p \tilde \Psi$ in conjunction with the implicit function theorem, to see that
\begin{align}
\dbar _{x_j} {p_{\rm{crit}}} (x,y) = - (\p_p \p_p \tilde \Psi )^{-1}  \left ( (\dbar _{x_j} \p_p\tilde  \Psi) + (\dbar_p \p_p \tilde \Psi )\dbar _{x_j} \overline{ {p_{\rm{crit}}}}(x,y)  \right )\label{eq.pg10}
 \end{align}
where all derivatives of $\tilde \Psi$ are evaluated at $({p_{\rm{crit}}}(x,y),x,y )$. The term $(\p_p \p_p \tilde \Psi )^{-1}$ is uniformly bounded for $x,y$ close to zero. We now claim the following $\dbar\tilde  \Psi $ estimates at $(\tilde w_1, \tilde w_2, x,y)$ for $\tilde w_1,\tilde w_2 \in \C^d$:
\begin{align}
\dbar_x \tilde \Psi &= \mathcal{O} (( \abs{\Im {\tilde w_1}}+ \abs{\Im {\tilde w_2} } + \abs{x - \Re{\tilde w_1} - i \Re{\tilde w_2}}) ^\infty),\label{eq.pg11} \\
\dbar_{y} \tilde \Psi &= \mathcal{O} (( \abs{\Im {\tilde w_1}} + \abs{\Im {\tilde w_2} } + \abs{y - \Re{\tilde w_1} + i \Re{\tilde w_2}}) ^\infty),\\
 \dbar_{{p_{\rm{crit}}}} \tilde \Psi &= \mathcal{O} ((\abs{\Im{\tilde w_1} }+ \abs{ \Im{\tilde w_2} })^\infty) .
\end{align}
Here we prove \eqref{eq.pg11}. Before extension, the only term in $\Psi$ depending on $x$ is $\psi(x,\bar w)$ (recall $\psi$ is an almost analytic extension of $\phi$ as described in Subsection \ref{subsection:Bergman kernels}). In real coordinates, this is $\psi(x_1,x_2,w_1, - w_2)$. An almost analytic extension in the $w$ variable can be written $\tilde \psi(x_1,x_2,\tilde w_1, -\tilde w_2) : \C^{4d} \to \C$. To compute $\dbar _x \tilde \psi$, we Taylor expand about $(x_1,x_2,\Re{\tilde w_1}, - \Re{\tilde w_2})$ with $K\in \N$ terms, so $\dbar_{x} \tilde \psi(x_1,x_2,\tilde w_1,\tilde w_2)$ is
	\begin{align}	
\begin{split}	
\frac{1}{2} (\p_{x_1} + i \p_{x_2} ) \left ( \sum _{|\alpha | \le K} \frac{\p^\alpha _{w_1, w_2} \tilde \psi(x_1,x_2,\Re{\tilde w_1} , - \Re{\tilde w_2} )}{\alpha!} (i \Im{\tilde w_1}, i \Im{\tilde w_2})^\alpha  \right.\\	
\left. + \vphantom{\sum_\alpha \frac{2}{1}} R_K(x_1,x_2,\tilde w_1,\tilde w_2) \right ) .	
\end{split}	
\end{align}
The $\dbar _x$ operator can be commuted with the $\p_{w_1,w_2}^\alpha$ operator by Proposition \ref{thm:aa e}. At $(x_1,x_2,\Re{\tilde w_1},\Re{\tilde w_2})$, $\tilde \psi = \psi$, and the $\dbar$ estimates can be made uniform with respect to differentiation of $w_i$. Recalling how $\psi$ is an almost analytic extension of $\phi$ provides the estimate: $(\p_{x_1} + i \p_{x_2}) \psi(x_1,x_2,y_1,y_2) = \mathcal{O} (|(x_1 + i x_2) - (y_1 - i y_2) |^\infty)$, so that for each $\alpha$:
\begin{align}
\frac{1}{2\alpha!} \dbar_x \p^\alpha _{w_1,w_2} \tilde \psi (x_1,x_2, \Re {\tilde w_1} , - \Re{\tilde w_2 }) (i \Im{\tilde w_1}, i \Im{\tilde w_2})^\alpha \\
= \abs{\Im {\tilde w} }^{|\alpha |} \mathcal{O } (\abs{x - \Re {\tilde w_1 } - i \Re {\tilde w_2 } }^\infty ) 
\end{align}
while $\dbar _x R_k = \mathcal{O}(\abs{\Im{\tilde w})}^{K+1} ) $. This proves \eqref{eq.pg11} and the others follow similarly.

Because $\Im {{p_{\rm{crit}}}(x,\bar x ) } =0$, by Taylor expansion: $\abs{\Im{ {p_{\rm{crit}}}(x,y) }}^2 \le C |x-\bar y|^2$ for some $C> 0$. By Lemma \ref{Lemma:627}, $|x - \Re{{p_{\rm{crit,1}}}(x,y)} - i \Re{{p_{\rm{crit},2}}(x,y)} |\le C |x -\bar  y|$. 
Using this, and the estimate of $\dbar _x \tilde \Psi$, we see that $\dbar _x \tilde \Psi ({p_{\rm{crit}}}(x,y),x,y) = \mathcal{O} (|x -\bar y|^\infty)$. 
This is true for all terms on the right side of \eqref{eq.pg10}, so that $\dbar_x {p_{\rm{crit}}}(x,y) = \mathcal{O} (|x- \bar y|^\infty)$. By an identical argument, $\dbar_y {p_{\rm{crit}}}(x,y) = \mathcal{O} (|x -\bar y|^\infty)$.
\end{proof}

\begin{lemma}\label{lemma:p(t)}
For $|x-\bar y|$ sufficiently small
\begin{align}
{p_{\rm{crit}}}(x,y) = \left ( \frac{1}{2} (x+ y) , \frac{1}{2i} (x-y) \right ) + \mathcal{O}(|x- \bar y| ^\infty).
\end{align}
\end{lemma}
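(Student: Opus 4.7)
Proof plan:

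The plan is to compare $\tilde p(x,y)$ with the explicit holomorphic function
\[
P(x,y) := \left( \tfrac{1}{2}(x+y),\ \tfrac{1}{2i}(x-y) \right),
\]
and to show that they agree to infinite order along the anti-diagonal $\Delta := \{(x, \bar x) : x \in \C^d\} \subset \C^d\times\C^d$ via the standard uniqueness of almost-analytic extensions off a totally real submanifold. Note $\Delta$ is totally real of real dimension $2d$, and $|x-\bar y|$ is (up to a bounded constant) the distance from $(x,y)$ to $\Delta$, so the claim is precisely that $\tilde p - P$ vanishes to infinite order along $\Delta$.

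First, I would check agreement on $\Delta$: evaluating $P$ at $(x,\bar x)$ gives $P(x,\bar x) = ((x+\bar x)/2,\ (x-\bar x)/(2i)) = (\Re x, \Im x)$, which by Lemma~\ref{Lemma:627} equals $\tilde p(x, \bar x)$. Hence $Q:=\tilde p - P$ vanishes identically on $\Delta$. Second, I would confirm almost analyticity of $Q$ along $\Delta$: since $P$ is holomorphic, $\dbar P \equiv 0$, and by the preceding lemma both $\dbar_x\tilde p$ and $\dbar_y\tilde p$ are $\mathcal{O}(|x-\bar y|^\infty)$ (the bound $|x-y|^\infty$ in that lemma, stated in manifold coordinates, translates to the distance to $\Delta$ once $\tilde p$ is viewed as a function of two complex arguments). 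Consequently $\dbar Q = \mathcal{O}(|x-\bar y|^\infty)$.

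The final step invokes the standard principle that a smooth function $Q$ which vanishes along a totally real submanifold $\Delta$ and whose $\dbar$ vanishes to infinite order along $\Delta$ is itself $\mathcal{O}(\dist(\cdot,\Delta)^\infty)$. A short induction on the order of differentiation, using the identity $\p_{y_j} = i(\dbar_{z_j}-\p_{z_j})$ (and analogously in $x$), reduces each mixed partial $\p^\alpha\dbar^\beta Q$ evaluated on $\Delta$ to a combination of tangential derivatives (which vanish since $Q|_\Delta = 0$) and derivatives of $\dbar Q$ (which vanish to all orders); Taylor's theorem at any point of $\Delta$ then yields $Q = \mathcal{O}(|x-\bar y|^\infty)$, the desired bound. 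The only delicate element is the bookkeeping in this induction, which is entirely standard and carried out in \citep[Chapter 10]{Treves}; no further estimates beyond Lemma~\ref{Lemma:627} and the preceding lemma are needed.
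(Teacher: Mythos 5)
Your proposal is correct and is essentially the paper's own argument: both compare $\tilde p$ with the explicit holomorphic function $\left(\tfrac12(x+y),\tfrac1{2i}(x-y)\right)$, use the $\dbar$ estimates from the preceding lemma together with the agreement on $y=\bar x$ (established in the proof of Lemma \ref{Lemma:627}), and conclude by uniqueness of almost analytic extensions off the totally real submanifold. Your sketch of the Taylor/induction argument merely fills in the standard uniqueness principle that the paper cites without proof.
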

\begin{proof}
From Lemma \ref{lemma.confusing}, we have that ${p_{\rm{crit}}} $ is almost analytic off the diagonal $y =  \bar x$. The function $(x,y) \mapsto ( 2^{-1} (x+ y) , (2i)^{-1} (x-y) ) $ is holomorphic and agrees with ${p_{\rm{crit}}} $ on $y =\bar  x$. Therefore by uniqueness (modulo $\mathcal{O}(|x- \bar y| ^\infty) $ error) of almost analytic extensions, the lemma follows.
\end{proof}

\smallsection{Summary of step 6} In this step we provided an estimate of ${p_{\rm{crit}}} (x,\bar y)$, the critical point of the phase $\tilde \Psi(p,x,\bar y)$. We will now use this to provide derivative estimates of the terms coming from the stationary phase expansion in \eqref{eq:646}.

\noindent \textbf{Step 7: Prove symbol estimates of stationary phase terms}

A simple computation shows that from Lemma \ref{lemma:p(t)},
\begin{align}
\Re {{p_{\rm{crit}}}(x,y) }_1 + i \Re{{p_{\rm{crit}}}(x,y)}_2 = \frac{1}{2} (x + \bar y) + \mathcal{O} (|x - \bar y| ^\infty),\label{eq:965}
\end{align}
where for $a,b\in \C^d$, $c = (a,b)\in \C^{2d}$, we write $c_1 = a$ and $c_2 = b$.

Then, recalling the definition of $m_\R$ from \eqref{eq:mR} and using \eqref{eq:965}, we get that
\begin{align}\label{eq:708}\begin{split}
m_\R (\Re{{p_{\rm{crit}}}(x,\bar y) }) &= m\left ( \frac{1}{2} (x+y) + \mathcal{O} (|x-y|^\infty)\right ) \\
&\lesssim m(x) (1 + N^\delta |x- y| ) ^{M_0}.\end{split}
\end{align}
Applying \eqref{eq:708} to the derivative estimate of $f_j$ in \eqref{eq:646}, we get that
\begin{align}
f_j (x,\bar y) \in N^{2\delta j } S_\delta (m_\R({p_{\rm{crit}}}(x,\bar y) )) \subset N^{2\delta j } S_\delta ( m(x) (1+ N^\delta |x-y| )^{M_0} ) . \label{eq:977}
\end{align}
We similarly have that $|{p_{\rm{crit},1}} (x,\bar y) + i {p_{\rm{crit},2}} (x,\bar y) - y | \le C |x - y | $ (for some $C>0$), so that
\begin{align}
f_j (x,\bar y) \in N^{2\delta j } S_\delta ( m(y) (1+ N^\delta |x-y| )^{M_0} ).
\end{align}

The supports of these $f_j$'s are contained in a strip along the diagonal, shrinking with respect to $N$. Indeed, because $p(x,\bar y, 0 ) ={p_{\rm{crit}}}(x,\bar y) = (2^{-1} (x + \bar y), (2i)^{-1} (x - \bar y) ) + \mathcal{O} (|x - y|^\infty) $ we get that $\abs{\Im{p(x,\bar y,0)}} \lesssim |x- y|$. Then observe that in \eqref{eq.h definition}, the term $\tilde f (p)$ is included, which by Proposition \ref{thm.extension of symbols}, is supported where $\abs{\Im{ p} } \lesssim N^{-\delta}$. Therefore, there exists $C > 0$ such that
\begin{align}
\supp f_j (x,\bar y) \subset \set{|x -y | \le C N^{-\delta}}. \label{eq:986}
\end{align}
But now we can apply \eqref{eq:986} to \eqref{eq:977} to see that:
\begin{align}
f_j(x,\bar y) \in N^{2\delta j } \left (  S_\delta (m(x))\cap S_\delta (m(y))  \right ).
\end{align}

The remainder can be bounded similarly. For each $\alpha\in \N^{4d}$
\begin{align}
|(\p^\alpha _{x,\bar x, y,\bar y} R_J)(x,\bar y) | &\lesssim_{\alpha,J}  \sum _{|\beta | = 2J} \sup _{z } |(\p^\alpha _{x,\bar x,y,\bar y} \p^\beta _z h)(x,\bar y ,z)| \\
&\lesssim_{\alpha,J}  N^{2J\delta } N^{|\alpha | \delta } \sup _{z\in \supp h(x,\bar y, \cdot )} m_\R (\Re{p(x,\bar y,z)}).
\end{align}
Now $\tilde g_2$ (defined in \eqref{eq.h definition}) is only supported where $|p| < \e$ so there exists $C>0$ such that 
\begin{align}
\sup _{z\in \supp h(x,\bar y, \cdot )} m_\R (\Re{ p(x,\bar y,z)}) \le C \min( m(x),m(y) ) (1 + N^\delta \e )^{M_0},
\end{align}
therefore
\begin{align}
R_J(x,\bar y) \in N^{2\delta J + \delta M_0 } ( S_\delta (m(x)) \cap S_\delta (m(y)) ). \label{eq:R_J bound}
\end{align}
We can bootstrap this to prove the better remainder bound stated in the Theorem. For any $J\in \Z_{> 0 }$, we rewrite the sum and remainder in \eqref{eq:623} as
\begin{align}
\sum_{j=0}^{J-1} N^{-j} f_j(x,\bar y) + N^{-J} \left ( \sum_{j=J}^{\tilde J-1} N^{J-j} f_j (x,\bar y) + N^{J - \tilde J} R_{\tilde J} (x,\bar y) \right )\label{eq:1008}
\end{align}
where $\Z \ni \tilde J > J + (\delta M_0)  (1-2\delta)^{-1} $. This choice of $\tilde J$ ensures that $N^{J-\tilde J}R_{\tilde J} (x,\bar y)$ belongs to $N^{2\delta J}(S_\delta (m(x)) \cap S_\delta (m(y)))$. It is also clear that for each $j = J , \dots, \tilde J - 1$, $N^{-j + J} f_j(x,\bar y) \in N^{2\delta J} (S_\delta (m(x)) \cap S_\delta (m(y)))$. We can therefore define the terms multiplied by $N^{-J}$ in \eqref{eq:1008} as the remainder term $R_J(x,\bar y)$ stated in the Theorem.


\smallsection{Summary of step 7} We showed in step 5 that derivatives of terms in the stationary phase expansion in \eqref{eq:623} are bounded in terms of the order function of $f$ evaluated at the critical point of the almost analytically extended phase, ${p_{\rm{crit}}} (x,\bar y)$ (see \eqref{eq:646}). From step 6, we estimated ${p_{\rm{crit}}} (x,\bar y)$ (Lemma \ref{lemma:p(t)}) to provide more explicit symbol estimates for $f_j$'s, and the remainder terms, in the stationary phase expansion. We now have a local expansion of the Schwartz kernel of the Toeplitz operator $T_N f$. To prove symbolic calculus results, we need to show this expansion is unique (modulo appropriate error) and the principal term is the principal term of $f$.

\noindent \textbf{Step 8: Prove stationary phase terms are almost analytic off the diagonal}

\begin{lemma}
We may choose an almost analytic extension of $\Psi$ such that {
$\tilde \Psi (p_{\text{crit}}(t),t)$ in \eqref{eq:623} can be replaced by $\psi(x,\bar y)- 2^{-1}(\phi (x) + \phi (y))$. In particular
}
$$\Psi ({p_{\rm{crit}}}(t), t)  = \psi (x,\bar y) - 2^{-1} (\phi(x) + \phi(y) ) {+\mathcal{O}(\dist(x,y)^\infty)}.$$
\end{lemma}
\begin{proof}
Because the Toeplitz quantization of the identity is the Bergman kernel, the phase can be recovered up to an appropriate error. 
Recall that \cite{BBS} showed:
\begin{align}
e^{-\frac{N}{2} (\phi(x)+\phi(y))}T_{N,1} (x,\bar y ) \sim e^{N( \psi (x,\bar y) - \frac{1}{2} (\phi(x) + \phi(y)) } \left ( \frac{N}{ 2 \pi} \right )^{d}\sum_0^\infty N^{-j} b_j (x,\bar y).
\end{align}

This must agree, up to $\mathcal{O} (N^{-\infty})$ error, with \eqref{eq:623}. It is therefore possible to choose $\tilde \Psi$ such that $\tilde \Psi ({p_{\rm{crit}}},t) = \psi(x,\bar y) - 2^{-1} (\phi(x) + \phi(y))$. 
\end{proof}

\begin{lemma}
All the $f_j (x,y)$'s are almost analytic off of $y = \bar x$.
\end{lemma}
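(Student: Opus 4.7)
The plan is to differentiate the explicit formula \eqref{eq.fj}--\eqref{eq.h definition} through the chain rule and show that $\dbar_{(x,y)} f_j = \mathcal{O}(|x-\bar y|^\infty)$ by exploiting the almost analyticity of the three building blocks of $h$. Since $\dbar_{(x,y)}$ commutes with $\p_z$ and with evaluation at $z = 0$, it suffices to prove that $\p_z^\alpha \dbar_{(x,y)} h(x,y,0) = \mathcal{O}(|x-\bar y|^\infty)$ for every multi-index $\alpha$.

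First I would collect the almost-analyticity data for each factor of $h(x,y,z) = \tilde f(\tilde w(x,y,z))\,\tilde g_2(x,y,\tilde w(x,y,z))\,\det(\p_z \tilde w(x,y,z))$. Proposition \ref{thm.extension of symbols}(3) yields $\dbar_w \tilde f = \mathcal{O}(|\Im w|^\infty)$ together with the same kind of estimate for every higher derivative. The amplitude $\tilde g_2$ is an almost analytic extension in $w$ of $B(x,\bar w) B(w,\bar y)\det(\p\dbar\phi)(w)\chi(w)$; since the Bergman coefficients $b_i$ in \eqref{b_is} are almost analytic off the diagonal by \citep{BBS}, both $\dbar_w \tilde g_2$ and the $(x,y)$-antiholomorphic derivative of $\tilde g_2$ with $w$ frozen vanish to infinite order on $\{y = \bar x\}$. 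For $\tilde w$ I would extend the preceding lemma from $z = 0$ to general small $z$ by differentiating the defining identity $q(\tilde w,x,y) = z$ via the implicit function theorem and using that $q$ inherits the $(x,y)$-almost-analyticity of $\tilde\Psi$ through Lemma \ref{lemma.q_lemma}; this yields $\dbar_{(x,y)} \tilde w(x,y,z) = \mathcal{O}(|x-\bar y|^\infty)$ smoothly in $z$.

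Second, each term produced by the chain rule for $\dbar_{(x,y)} h$ carries a factor of one of three types: (a) $\dbar_{(x,y)} \tilde w$, directly $\mathcal{O}(|x-\bar y|^\infty)$; (b) $(\dbar \tilde f)(\tilde w)$, which is $\mathcal{O}(|\Im \tilde w|^\infty)$; (c) an analogous $\dbar \tilde g_2$ factor. Lemma \ref{lemma:p(t)} gives $\Im \tilde w(x,y,0) = \Im \tilde p(x,y) = \mathcal{O}(|x-\bar y|^\infty)$. Since every higher derivative of $\dbar \tilde f$ is still $\mathcal{O}(|\Im w|^\infty)$, the Leibniz expansion of $\p_z^\alpha (\dbar \tilde f)(\tilde w)$ evaluated at $z = 0$ retains an arbitrary power of $|\Im \tilde w(x,y,0)|$, which Lemma \ref{lemma:p(t)} converts into $|x-\bar y|^\infty$. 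The same reasoning handles type (c). Summing the chain-rule contributions produces the desired bound, and uniqueness of almost analytic extensions modulo $\mathcal{O}(|x-\bar y|^\infty)$ then gives the claim.

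The main technical annoyance is precisely the interplay between the $z$-derivatives from $\Delta_z^j$ and the $|\Im \tilde w|^\infty$ decay in types (b) and (c): one must check that repeated $\p_z$-differentiation cannot strip away enough powers of $|\Im \tilde w|$ to break the argument. The resolution is that $\dbar \tilde f$ and $\dbar_w \tilde g_2$, along with all of their higher derivatives, are $\mathcal{O}(|\Im w|^\infty)$, so the Gaussian-like vanishing survives any fixed order of differentiation in $z$, and Lemma \ref{lemma:p(t)} then transfers the $|\Im \tilde w|^\infty$ factor at the critical point to the distance $|x-\bar y|$ to the diagonal.
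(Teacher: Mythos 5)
Your proposal follows essentially the same route as the paper: differentiate $\Delta_z^j h(x,\bar y,0)$ through the chain rule and control each resulting term by the almost analyticity of $\tilde w$ (equivalently $\tilde p$) off $\set{y=\bar x}$, of $\tilde f$ off the real points of the $w$-variable, and of $\tilde g_2$ (through the Bergman amplitudes $B$) off its totally real sets, then invoke uniqueness of almost analytic extensions. Two small imprecisions worth fixing but not gaps: $\Im \tilde p(x,y)$ is only $\mathcal{O}(|x-\bar y|)$ near the diagonal (not $\mathcal{O}(|x-\bar y|^\infty)$), which is all you need to convert $\mathcal{O}(|\Im \tilde w|^\infty)$ into $\mathcal{O}(|x-\bar y|^\infty)$, and in this exotic class the $\dbar$-estimates of Proposition \ref{thm.extension of symbols} carry powers of $N^\delta$, so the honest conclusion is $\dbar_x f_j (x,y) = N^{\delta(1+2j+2M_0)}\mathcal{O}((N^\delta |x-y|)^\infty)$ as in \eqref{almost almost aa}.
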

\begin{proof}

When computing $(\dbar_{x,y} \Delta _z ^j) h(x,\bar y,0)$, observe the following properties of the functions making up $h$ in \eqref{eq.h definition}:
\begin{enumerate}
\item When $\dbar _x$ falls on $\tilde f$, we get $(\p _p \tilde f) (\dbar _x {p_{\rm{crit}}}) + (\dbar _p \tilde f)( \dbar_x \overline{ {p_{\rm{crit}}}})$. The first term is controlled by almost analyticity of ${p_{\rm{crit}}}$ off of $y = \bar x $ while the second term is controlled by almost analyticity of $\tilde f$ off of ${p_{\rm{crit}}}\in \R^{2d}$.
\item When $\dbar _x$ falls on $\tilde g_2$, we get $(\dbar_1 \tilde g_2) +  (\p_3 \tilde g_2)( \dbar _x {p_{\rm{crit}}} )+ (\dbar _3 \tilde g_2)( \dbar \overline{{p_{\rm{crit}}}})$ where $\p_i$ and $\dbar_i$ are the holomorphic and anti-holomorphic derivatives of the $i^{th}$ argument of $g$ respectively. The first term is controlled by almost analyticity of $B(x,\bar w)$ off of $w = x$, the second term is controlled by almost analyticity of ${p_{\rm{crit}}}$ off of $y = \bar x$, and the third term is controlled by almost analyticity off both totally real manifolds.
\item When $\dbar _x$ falls on the determinant, we get control by the almost analyticity of ${p_{\rm{crit}}}$ off of $y = \bar x$.
\end{enumerate}

Therefore
\begin{align}
\dbar _x f_j (x,y) = N ^{\delta (1+2j + M_0)} \mathcal{O} ( (|x - y| N^\delta)^\infty). \label{almost almost aa}
\end{align}
\end{proof}

Note that in this expansion, only knowledge of the kernel along the diagonal is required. Indeed, if $f_j$ and $g_j$ agree along $y = x$, and obey \eqref{almost almost aa}, then by the Gaussian behavior of the phase:
\begin{align}
\left ( \frac{N}{2 \pi}\right )^d e^{N\psi(x,\bar y)} \sum N^{-j} (f_j (x,\bar y) - g_j (x,\bar y) ) = e^{\frac{N}{2} (\phi(x)+ \phi(y)) }\mathcal{O} (N^{-\infty}).
\end{align}

\smallsection{Summary of step 8} Here we show the terms coming from the stationary phase expansion are almost analytic off the totally real vector space $x = y$ which provides a unique expansion (modulo appropriate error). The final step of the proof is to compute the first term along the diagonal, and prove the global statement.

\noindent \textbf{Step 9: Zeroth order term and global statement}

Examining \eqref{eq.fj} and the subsequent equations, along the diagonal
\begin{align}
f_0 (x,\bar x) = f(x) B(x,\bar x) B(x,\bar x) \det (\p \dbar \phi (x) ) \det \left ( \frac{\p q^{-1} (x,\bar x, z)}{\p z} \right ) \Big |_{z = 0}.
\end{align}
This can all be explicitly computed. But note that nothing, except $f$, on the right-hand side depends on $f$. And if $f = 1$, then $T_{N,f} = \Pi_N$. By \cite{BBS}, the leading order term of $\Pi_N$ is $1$, therefore, we know that everything on the right-hand side of order $N^0$, except $f$, must be $1$. Therefore $f_0 (x,\bar x) = f(x)+ \mathcal{O}(N^{-(1-2\delta)} m(x) )$. In the appendix, the second term is computed.

We now have proven existence of $f_j $ locally, in a ball of radius $\e$, around any point $x\in X$. Because each $f_j$ is unique along the diagonal, we can patch together $f_j's$ to construct a global $f_j$ defined near the diagonal. 
\end{proof}

\subsection{Composition of Toeplitz Operators}
Suppose that $f \in S_\delta (m_1)$ and $g\in S_\delta (m_2)$ for two $\delta$-order functions $m_1$ and $m_2$ and $\delta \in [0,1/2)$. Roughly, this section constructs a function $h \in S_\delta (m_1 m_2) $ such that $T_{N,f} \circ T_{N,g} \approx T_{N,h}$. This $h$ will be written as a star product: $h =f \star g$ following the now standard notation first introduced in \cite{Bayen}.

\begin{theorem}[\textbf{Composition Estimate}]\label{Theorem:composition }
For $\delta\in [0,1/2)$, suppose $m_1$ and $m_2$ are two $\delta$-order functions on $X$ (a quantizable K\"ahler manifold), $f \in S_\delta (m_1)$ and $g\in S_\delta (m_2)$.  Then there exists $(f \star g ) \in S_\delta (m_1 m_2)$ such that
\begin{align}
\norm{T_{N,(f\star g )} - T_{N,f} \circ T_{N,g}}_{L^2(X,L^N) \to L^2(X,L^N)} = \mathcal{O}(N^{-\infty})
\end{align}
{and 
\begin{align}
    (f\star g )(x) = f(x) g(x) +\mathcal{O}(N^{1-2\delta } m_1(x)m_2(x)).
\end{align}
}
\end{theorem}
{
The construction of $f\star g$ in Theorem \ref{Theorem:composition } follows the main argument presented in \cite{Charles} for the composition of classic Berezin--Toeplitz operators.
This closely follows the derivation of star product for the Weyl quantization of functions on $\R^{2d}$ (see for instance \cite[\S 2.3]{folland1989harmonic} or \cite[Theorem 4.12]{Zworski} among others).
}
\begin{proof}
\noindent {\textbf{Step 1:}}
Let $h = f \star g \in S_\delta (m_1m_2)$ {(which will be constructed in Step 3 of this proof)} and let $K(x,y)$ be the Schwartz Kernel of $T_{N, h} - T_{N,f} \circ T_{N,g}$. 
By the Schur test:
\begin{align}
&\norm{T_{N, h} - T_{N,f} \circ T_{N,g}}_{L^2(X, L^N ) \to L^2 (X,L^N)} ^2   \\
  & \quad \le { \left ( \sup _{x \in X} \int _X \norm{K(x,y)}_{L^2 (X,L^N)} \dd \mu (y) \right ) } {\left ( \sup _{y \in X} \int _X \norm{K(x,y)}_{L^2 (X,L^N)} \dd \mu (x) \right )}.
\end{align}
By Theorem \ref{thm.asmptotic} and \eqref{eq:asymp_better}, we can approximate the Schwartz kernels of $T_{N,f}$ and $T_{N,g}$ with $J+1$ terms. That is, we may write
\begin{align}
T_{N,f} (x,\bar y) = \underbrace{ \left ( \frac{N}{2\pi}\right ) ^d}_{:=c_d} e^{N\psi (x,\bar y) } \left ( \underbrace{ \sum _{j=0} ^{J} N^{-j} f_j (x,\bar y ) }_{:= F_{J} (x,\bar y)} \right ) + \underbrace{ e^{\frac{N}{2} (\phi(x) + \phi(y) )} \mathcal{O}(N^{- \tilde J}  m_1(x)) }_{:=R_{f,J} (x,y)}
\end{align}
where $\tilde J = (J+1)(1- 2\delta)  - d$. Define $G_{J}$ and $R_{g,J}$ similarly as an approximation of the kernel of $T_{N,g}$. Then locally:	
\begin{align}	
T_{N,f} \circ T_{N,g} (x,\bar y) = I_1 + I_2 + I_3,
\end{align}
where
\begin{align}
I_1 &: =c_d^2 \int_U e^{N (\psi(x,\bar w) - \phi(w) + \psi(w,\bar y)) } F_{J }(x,\bar w) G_{J} (w,\bar y) \dd \mu (w),\\
I_2 & := c_d \int_U \left ( e^{N\psi(x,\bar w) } F_{J} (x,\bar w) R_{g,J} (w, y) + e^{ N \psi(w,\bar y)} G_{J}(w, y) R_{f,J} (x,w) \right ) e^{-N \phi(w)}\dd \mu (w),\\
I_3 &:= \int_X e^{\frac{N}{2}(\phi(x) + \phi(y))} \mathcal{O} \left (N^{-2\tilde J } m_1(x)m_2(x) \right ) \dd \mu (w).
\end{align}
Here $U \subset X$ is a coordinate patch containing $x$ and $y$ which we assume exists, otherwise (by the same reasoning as in the proof of Theorem \ref{thm.asmptotic}), $T_{N,f}(x,\bar y)$ will be $\exp(\frac{N}{2} (\phi(x) + \phi(y)) \mathcal{O}(N^{-\infty})$ which is bounded by a constant times the $I_3$ term. Moreover, by the same reasoning, we can just integrate over $U$, as the integral over $X \setminus U$ will similarly be negligible.

$I_3$ is $ e^{\frac{N}{2} (\phi(x) + \phi(y) )}\mathcal{O} \left (N^{-2\tilde J} m_1(x) m_2(x)\right )$. Using \eqref{eq.gaussian behaviour}, $I_2$ is bounded in absolute value by:	
\begin{align}	
 cN ^{d - \tilde J + \delta M_0} e^{\frac{N}{2} (\phi(x) + \phi(y))} \int_U \left ( e^{ - Nc |x-w |^2 } |F_J (x,\bar w)| + e^{-Nc|w- y| ^2}| G_J (w,\bar y)| \right ) \dd \mu (w)  \\	
  \lesssim N^{d - \tilde J} e^{\frac{N}{2} (\phi(x) + \phi(y) )} N^{2M_0 \delta } 
\end{align}	
for some positive constant $c> 0 $. Here we used that $m_1$ and $m_2$ are bounded by a constant times $N^{\delta M_0}$.

\noindent { \textbf{Step 2:}
We will construct a $h\in S_\delta (m_1m_2)$ to cancel with the terms in $I_1$.

Define the operators $C_j : C^\infty (X; \C) \to C^\infty (X \times X ; \C)$ by $C_j(f):=  B^{-1} (x,\bar y) f_j (x,\bar y)$. Recall that $B = 1 + N^{-1 } b_ 1 + \cdots$ is the amplitude of the Bergman kernel. Note that $B$ is bounded below for $N$ sufficiently large. Explicitly, for $x,y$ contained in a sufficiently small neighborhood $U$, $C_j$ are such that	
\begin{align}	
\left ( \frac{N}{2 \pi}\right ) ^{2d} \int_U e^{N \Phi _{x,\bar y}(w)} B(x,\bar w) B(w,\bar y) f(w) \dd \mu(w) 	
\end{align}	
is
\begin{align}	
 \left ( \frac{N}{2 \pi}\right )^d e^{N\psi(x,\bar y) } B(x,\bar y) \sum _{j=0}^{J} N^{-j} C_j [f(\cdot ) ] (x,\bar y) + R_{f,J}(x,y) .	
\end{align}
where $\Phi _{x,\bar y } (w): = \psi(x,\bar w) - \phi(w) + \psi (w,\bar y).$
By \eqref{eq.fj}, $C_j$ are differential operators of order at most $2j$.

Using this notation, $I_1$ is
\begin{align}
   & c_d^2 \int_U e^{N \Phi_{x,\bar y } (w)}  B(x,\bar w ) B(w,\bar y) \left ( \sum_{k=0}^J C_k [f] (x,\bar w ) \right)  \left ( \sum_{j=0}^J C_k [g] (w,\bar y) \right) \dd \mu (w) \\
    =&c_d^2 \sum_{j=0}^{2J} N^{-j} \int_U e^{\Phi(x,\bar y)(w)} B(x,\bar w) B(w,\bar y) \left ( \sum _{a+b = j} C_a [f](x,\bar w) C_b [g](w,\bar y) \right) \dd \mu (w)\\
    =& c_d e^{N \psi (x,\bar y)} B(x,\bar y) \sum_{j=0}^{2J} N^{-j} k_j (x,\bar y) + e^{\frac{N}{2} (\phi (x) + \phi(y))}\mathcal{O} (N^{-\widetilde {2J}} m_1 (x))
\end{align}
where for each $j$
\begin{align}
k_j (x,\bar y) = \sum _{c + d =j}C_d \left [ \sum _{a+b = c} C_a[f](x,\cdot ) C_b[g] (\cdot ,\bar y) \right ] (x,\bar y). \label{eq:1267}
\end{align}
}

\noindent { \textbf{Step 3:}
If we make the ansatz that $h \sim \sum_{j=0}^{\infty} N^{-j} h_j$ for some $h_j \in  N^{2\delta j} S_{\delta } (m_1 m_2)$, then 
\begin{align}
T_{N,h}(x,\bar y)  = \left ( \frac{N}{2\pi} \right )^d e^{N\psi (x,\bar y) }  B(x,\bar y) \sum _{j=0}^{2J} N^{-j} &\sum _{c +d = j} C_d [ h_c ] (x,\bar y) \label{eq:1273}\\
& + e^{\frac{N}{2} (\phi (x) + \phi(y) )} \mathcal{O}( N^{-\widetilde{2J}} m_1(x)m_2(x)).
\end{align}
We will get cancellation of terms in $I_1$ by matching the coefficients of $N^{-j}$ of \eqref{eq:1267} with \eqref{eq:1273} which will follow if
\begin{align}
\sum _{c+d = j } C_d [h_c ] (x,\bar y ) = \sum _{c + d =j}C_d \left [ \sum _{a+b = c} C_a[f](x,\cdot ) C_b[g] (\cdot ,\bar y) \right ] (x,\bar y). \label{eq:1278}
\end{align} 
Recall that $C_0 [f(\cdot ) ] (x,\bar x) = f(x)$, therefore letting $y = x$ and rearranging \eqref{eq:1278} gives us:
\begin{align}
h_j (x) = \sum _{c + d =j}C_d \left [ \sum _{a+b = c} C_a[f](x,\cdot ) C_b[g] (\cdot ,\bar x) \right ] (x, \bar x) - \sum _{d =1}^j C_d [h_{ j - d }] (x,\bar x), \label{star constructions}
\end{align}
which inductively gives the functions $h_j$, which is sufficient as $h_0 (x) = f(x) g(x)$. This provides us with the following Lemma.

\begin{lemma}[\textbf{Derivative Estimates of $\bm{(f \star g )_j}$}]\label{lemma.derivative estimates}
There exist linear bi-differential operators $\mathcal{C}_j$ of order at most $2j$ such that $h_j (x) = \mathcal{C} _j [f,g] (x) $ agrees with $h_j$ written in \eqref{star constructions}.
\end{lemma}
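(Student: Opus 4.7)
The plan is to induct on $j$. For the base case $j=0$, equation \eqref{star constructions} gives $h_0(x) = C_0[f](x,\bar x)\,C_0[g](x,\bar x) = f(x)g(x)$ (using $C_0[\cdot](x,\bar x)$ is the identity, as noted in Step 8 of the proof of Theorem \ref{thm.asmptotic}), which is a bi-differential operator of order $0$. Suppose inductively that for every $k<j$ we have already produced a bi-differential operator $\mathcal{C}_k$ of order at most $2k$ with $h_k=\mathcal{C}_k[f,g]$.

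The key technical input is that each $C_\ell$, as constructed in \eqref{eq.fj}, is (up to multiplication by $B^{-1}$) a differential operator of order $2\ell$ whose evaluation point on the diagonal $y=x$ collapses to $x$ itself, by Lemma \ref{lemma:p(t)}. Concretely, in a local chart one has
\begin{equation}
C_\ell[u](x,\bar w) \;=\; \sum_{|\alpha|\le 2\ell} a_{\ell,\alpha}(x,\bar w)\,(\partial^\alpha u)(\tilde p(x,\bar w)),
\end{equation}
with smooth coefficients $a_{\ell,\alpha}$ independent of $u$. I will use this representation to count derivatives.

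Now analyze each summand in \eqref{star constructions}. Consider first
\begin{equation}
C_d\!\left[\sum_{a+b=c} C_a[f](x,\cdot)\,C_b[g](\cdot,\bar x)\right]\!(x,\bar x),\qquad c+d=j.
\end{equation}
The argument $w\mapsto C_a[f](x,w)\,C_b[g](w,\bar x)$ is a product of two factors involving respectively up to $2a$ derivatives of $f$ and up to $2b$ derivatives of $g$, with smooth coefficients in $(x,w)$. Applying $C_d$, which acts on the $w$-argument and contributes up to $2d$ derivatives in $w$, and then using the chain rule on terms of the form $(\partial^\alpha f)(\tilde p(x,\bar w))$, every $w$-derivative either lands on a smooth coefficient or increases by one the order of a derivative of $f$ or $g$. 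If $k$ of the $2d$ derivatives ultimately raise the order on the $f$-side and $2d-k$ on the $g$-side, we obtain derivatives of $f$ of order $\le 2a+k$ and of $g$ of order $\le 2b+(2d-k)$, for a combined total $\le 2a+2b+2d=2(c+d)=2j$. Summing over $a+b=c$ and $c+d=j$ produces a bi-differential operator in $(f,g)$ of total order at most $2j$ with smooth coefficients depending only on the Kähler data.

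For the subtracted terms $\sum_{d=1}^{j} C_d[h_{j-d}](x,\bar x)$, apply the inductive hypothesis: $h_{j-d}=\mathcal{C}_{j-d}[f,g]$ is a bi-differential operator of order at most $2(j-d)$ in $(f,g)$. Representing $C_d$ as above and using the chain rule on $(\partial^\alpha h_{j-d})(\tilde p(x,\bar x))$, each of the $\le 2d$ derivatives supplied by $C_d$ raises by one the order of some partial derivative of $f$ or of $g$ inside $\mathcal{C}_{j-d}[f,g]$, giving a bi-differential expression of total order $\le 2d+2(j-d)=2j$. Adding this to the first block defines $\mathcal{C}_j$ and closes the induction. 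The main obstacle, and the reason to write out the two cases separately, is bookkeeping: one must track that differentiation in the auxiliary variable $w$ (respectively the single variable in $h_{j-d}$) increments derivative orders by exactly the same amount on the $(f,g)$-side, so that the naive order count $2a+2b+2d=2j$ is tight; since the evaluation point $\tilde p(x,\bar w)$ reduces to $x$ on the diagonal, no spurious derivatives are produced and the bound $2j$ is preserved.
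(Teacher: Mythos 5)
Your proof is correct and follows essentially the same route as the paper: induction on $j$ with base case $h_0 = fg$, using that each $C_\ell$ is a differential operator of order $2\ell$ (from \eqref{eq.fj}) so that the first block of \eqref{star constructions} contributes total order $2a+2b+2d=2j$ and the subtracted block contributes $2d+2(j-d)=2j$ by the inductive hypothesis. The paper's proof is just a terser version of this same order-counting argument; your extra bookkeeping with the representation of $C_\ell$ evaluated at $\tilde p$ and the collapse to $x$ on the diagonal is consistent with the paper's construction.
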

\begin{proof}
This follows by induction on $j$. When $j =0 $, $h_0 = fg$ and so $\mathcal{C} _ 0 $ is a zero order operator. Next assume this is true for $j-1$ and apply \eqref{star constructions}. The first summation in \eqref{star constructions} involves derivatives of order $2j$, and by the induction hypothesis, the second summation involves derivatives of order $2j$.
\end{proof}

To construct $h$ via Borel's Theorem (Proposition \ref{prop:Borel}) we need to show that for each $j$, $h_j \in N^{2\delta j} S_\delta (m_1 m_2) $. This follows immediately by Lemma \ref{lemma.derivative estimates}, as
\begin{align}
h_j (x) = \mathcal{C}_j [f,g] (x) \in N^{2j\delta} S_\delta (m_1 m_2 ) .
\end{align}
We therefore have
\begin{align}
    T_{N,h} (x,\bar y) = I_1 + e^{\frac{N}{2} (\phi(x) + \phi(y))} \mathcal{O} (N^{-\widetilde{2J}} m_1 (x)m_2(x)).
\end{align}
}
\noindent { \textbf{Step 4:}
Putting these estimates together, we get 
\begin{align}
 K(x,y) &= T_{N,(f\star g) } - (I _1 + I_2 + I _3 ) \\
&= e^{\frac{N}{2} (\phi(x) + \phi(y))} \Big ( \mathcal{O} \Big ( N^{-\widetilde{2 J }} m_1 (x)m_2(x) \Big )\\
& \qquad  \qquad  \qquad +\mathcal{O} \Big (N^{d +2M_0 \delta-\tilde J  }\Big )+ \mathcal{O}\left (N^{-\widetilde{2J}} m_1 (x)m_2(x) \right ) \Big ) \\
&= e^{\frac{N}{2} (\phi(x) + \phi(y)) }\mathcal{O} (N^{2M_0\delta +d - \widetilde{J} }).
\end{align}
We therefore get that
\begin{align}
 \sup _{x \in X} \int _X |K(x,y) | e^{\frac{-N}{2} (\phi(x) + \phi(y))} \dd \mu (y) = \mathcal{O}(N^{-\infty}),\\
 \sup _{y \in X} \int _X |K(x,y) | e^{\frac{-N}{2} (\phi(x) + \phi(y))} \dd \mu (x) = \mathcal{O}(N^{-\infty})
\end{align}
as $J$ can be made arbitrarily large.}
\end{proof}

\section{Applications of the Exotic Calculus}\label{section:application}
This symbol calculus allows us to get a parametrix construction, a functional calculus, and a trace formula.

\subsection{Parametrix Construction}
We begin by proving a parametrix construction of Toeplitz operators associated to symbols in $S_\delta(m)$ which are elliptic with respect to $m$. This follows the usual parametrix construction for pseudo-differential operators (see for instance \cite[Theorem 4.1]{Grigis}).
\begin{theorem}[\textbf{Parametrix Construction}]\label{thm:parametrix construction}
Suppose $\delta \in [0,1/2)$, $m\ge 1$ is a $\delta$-order function on $X$ (a quantizable K\"ahler manifold), and $f\in S_\delta (m)$ is such that there exists $C>0$ and $z\in \C$ such that
\begin{align}
|f(x) - z| > C m(x)
\end{align}
for all $x\in X$. Then there exists $g\in S_\delta (m^{-1})$ such that
\begin{align}
T_{N,f-z} \circ T_{N,g} = 1 + \mathcal{O} (N^{-\infty}), && T_{N,g} \circ T_{N,f-z} = 1 + \mathcal{O}(N^{-\infty}),
\end{align} 
and the principal symbol of $g$ is $(f_0-z)^{-1}$ where $f_0$ is the principal symbol of $f$.
\end{theorem}
\begin{proof}
Define ${g_0}(x) := (f_0(x) -z )^{-1}$ so that $|{g_0}(x)| \le C m(x)$.
For each $\alpha \in \N^{2d}$ locally $\p^\alpha_{x,\bar x} {g_0} (x)$ can be estimated by the Fa\`{a} di Bruno formula:
\begin{align}
 \p^\alpha_{x,\bar x} {g_0}(x) = \sum _{\pi \in \Pi_\alpha} \frac{c_\pi}{(z-f_0)^{|\pi|+1}} \prod _{\beta \in \pi} \p^\beta_{x,\bar x}(z - f_0(x)) ,
\end{align} 
where $\Pi_\alpha$ is the set of partitions on the set $(1, 2, \dots, |\alpha|)$, $\beta \in \pi$ runs through the blocks in the partition $\pi$, and $c_\pi$ is the constant from repeatedly differentiating $x^{-1}$. For each $\pi$, note that $|\beta | = |\pi| \le |\alpha|$ for $\beta \in \pi$ so that: 
 \begin{align}
 \left | \frac{c_\pi}{(z-f_0(x))^{|\pi|+1}} \prod _{\beta \in \pi} \p^\beta_{x,\bar x}(z - f_0(x))\right | &\lesssim  (m(x))^{-|\pi| - 1} N^{|\pi| \delta} (m(x))^{|\pi|}\\
 &\lesssim (m(x))^{-1} N^{|\alpha | \delta }
 \end{align}
 {where the implied constants in the inequalities depend on $\pi$, but are independent of $N$ and $x$.}
 
We therefore have that ${g_0} \in S_\delta (m^{-1} ) $. 
Next, using Theorem \ref{Theorem:composition }, let ${s_1} \in S_\delta (1)$ be such that $T_{N,f-z}\circ T_{N,{g_0}} = 1 - N^{-(1-2\delta)} T_{N,{s_1}} + \mathcal{O} (N^{-\infty})$. Then define ${s_2}\sim \sum_{j=0}^{\infty}N^{-j(1-2\delta)} {s_1}^{\star j}$, where:
\begin{align}
{s_1}^{\star j} := \underbrace{{s_1}\star \cdots \star {s_1} }_{j \text{ terms}}.
\end{align}
By repeatedly applying Theorem \ref{Theorem:composition }, ${s_1}^{\star j} \in S_\delta(1)$ for all $j \in \Z_{\ge  0}$ so that ${s_2} \in S_\delta (1)$. Lastly, define $g := {g_0} \star {s_2}\in S_\delta(m^{-1})$. We can check that
\begin{align}
T_{N,f-z} \circ T_{N,g} &= T_{N,f-z} \circ T_{N,{g_0} \star {s_2}} = T_{N,f-z} \circ T_{N,{g_0}} \circ T_{N,{s_2}} + \mathcal{O}(N^{-\infty})\\ 
&= (1 - N^{-(1-2\delta) } T_{N,{s_1}} ) \circ  T_{N,{s_2}} + \mathcal{O}(N^{-\infty}).
\end{align}
So that for each $J\in \N$, we have 
\begin{align}
T_{N,f-z} \circ T_{N,g} &= (1 - N^{-(1-2\delta) } T_{N,{s_1}} ) \circ \left (\sum _{j=0}^J N^{-(1-2\delta)j} T_{N,{s_1}}^j \right ) + \mathcal{O}(N^{-(1-2\delta)(J+1)}) \\
&= 1 + \mathcal{O}(N^{-(1-2\delta)(J+1)}).
\end{align}
Therefore $T_{N,f-z} \circ T_{N,g} = 1 + \mathcal{O}(N^{-\infty})$ so that $g$ is a right-parametrix for $f-z$. We can similarly construct $g_\ell$ as a left-parametrix for $f-z$. But note that
\begin{align}
T_{N,g_{\ell}} = T_{N,g_{\ell}} \circ (T_{N,f-z} \circ T_{N,g} + \mathcal{O}(N^{-\infty})) = T_{N,g} + \mathcal{O}(N^{-\infty}).
\end{align}
Therefore $g$ is also a left-parametrix for $f-z$. Lastly, the principal symbol of $g$ is just the product of the principal terms of ${g_0}$ and ${s_2}$, which is just the principal term of ${g_0}$, which is $(f_0-z)^{-1}.$
\end{proof}

\subsection{Functional Calculus}
Here we present functional calculus of Toeplitz operators using the Helffer-Sj\"ostrand formula. For symbols bounded uniformly in $N$, this result is proven in \cite[Proposition 12]{Charles}. Our proof is adapted from results on functional calculus of pseudo-differential operators with symbols in similarly defined symbol classes presented by Dimassi and Sj\"ostrand in \cite[Chapter 8]{Dimassi}.

\begin{theorem}[\textbf{Functional Calculus}]\label{thm:functional calculus}
Suppose that $\delta \in [0,1/2)$, $m$ is a $\delta$-order function on $X$ (a quantizable K\"ahler manifold) such that $m\ge 1$, and $f\in S_\delta(m)$ is such that
\begin{enumerate}
\item $f(x) \in \R_{\ge 0}$ for all $x\in X$,
\item there exists $C_0> 0 $ such that $|f(x)| \ge C_0^{-1} m (x) - C_0$.
\end{enumerate}
Then for any $\chi \in C^\infty_0 (\R ; \C)$, there exists $g\in S_\delta (m^{-1})$ such that
\begin{align}
\chi(T_{N,f}) = T_{N,g} + \mathcal{O} (N^{-\infty}),
\end{align}
and the principal symbol of $g$ is $\chi(f_0)$ where $f_0$ is the principal symbol of $f$.
\end{theorem}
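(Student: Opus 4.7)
The natural tool is the Helffer-Sj\"ostrand formula combined with a resolvent parametrix construction, in the spirit of \citep[Chapter 8]{Dimassi}. Since $f$ is real-valued, $T_{N,f}$ is self-adjoint on the finite-dimensional space $H^0(X,L^N)$ and has real spectrum, so for an almost analytic extension $\tilde\chi \in C_0^\infty(\C)$ of $\chi$ we may write
\begin{align}
\chi(T_{N,f}) = -\frac{1}{\pi} \int_\C \dbar \tilde\chi(z) \, (z - T_{N,f})\i \, L(\dd z),
\end{align}
where $L(\dd z)$ denotes Lebesgue measure on $\C$. It thus suffices to approximate the resolvent $(z - T_{N,f})\i$, for $z$ in a complex neighborhood of $\supp\chi$ with $\Im{z}\neq 0$, by a Toeplitz operator whose symbol blows up at worst polynomially as $|\Im{z}|\to 0$.

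The key observation is that $r_z(x) := (z - f(x))\i$ lies in $S_\delta(m\i)$ with seminorms growing polynomially in $|\Im{z}|\i$. Indeed, the hypothesis $|f|\ge C\i m - C$ forces $|z - f(x)| \gtrsim m(x)$ whenever $m(x)$ is large enough (uniformly for $z$ in a fixed bounded set), while everywhere else $|z - f(x)|\ge |\Im{z}|$; differentiating the identity $r_z(z-f)=1$ and combining with $\p^\alpha f \in N^{\delta|\alpha|}S_\delta(m)$ yields $|\p^\alpha r_z|\le C_\alpha N^{\delta|\alpha|} m\i |\Im{z}|^{-M_\alpha}$. Using the star-product calculus from Theorem \ref{Theorem:composition } and the recursion \eqref{star constructions}, I inductively construct symbols $q_j(z)\in S_\delta(m\i)$ (with $|\Im{z}|$-dependent seminorms) starting from $q_0(z)=r_z$, so that the formal series $\sum N^{-j} q_j(z)$ satisfies $q(z)\star(z-f)=1$ modulo $N^{-\infty}S_\delta(1)$, with each $q_j$ acquiring a finite but larger negative power of $|\Im{z}|$. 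Borel resumming via Proposition \ref{prop:Borel} produces an honest $q(z)\in S_\delta(m\i)$, and Theorem \ref{Theorem:composition } then gives
\begin{align}
(z - T_{N,f}) T_{N,q(z)} = I + E(z,N), \qquad \norm{E(z,N)}_{L^2\to L^2} \le C_K N^{-K} |\Im{z}|^{-L_K}
\end{align}
for every $K\in\N$. Hence on $\Im{z}\neq 0$, $(z - T_{N,f})\i = T_{N,q(z)} - T_{N,q(z)}E(z,N)(I+E(z,N))\i$.

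Inserting this into the Helffer-Sj\"ostrand integral and exploiting $\dbar\tilde\chi(z) = \mathcal{O}_M(|\Im{z}|^M)$ for every $M$ absorbs all the negative powers of $|\Im{z}|$: the operator-valued error contributes $\mathcal{O}(N^{-\infty})$ in trace norm, and the main term is built term-by-term from
\begin{align}
g_j(x) := -\frac{1}{\pi}\int_\C \dbar\tilde\chi(z) \, q_j(z)(x) \, L(\dd z) \in S_\delta(m\i).
\end{align}
A final application of Proposition \ref{prop:Borel} gives $g\sim\sum N^{-j} g_j \in S_\delta(m\i)$ with $\chi(T_{N,f}) = T_{N,g} + \mathcal{O}(N^{-\infty})$, and the scalar Helffer-Sj\"ostrand formula identifies $g_0(x) = -\pi\i\int_\C \dbar\tilde\chi(z)(z-f(x))\i L(\dd z) = \chi(f(x)) = \chi(f_0(x))$, matching the zeroth term from Theorem \ref{thm.asmptotic}. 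The main obstacle is the careful bookkeeping of the negative powers of $|\Im{z}|$ accumulating in each $q_j$ and in the operator remainder $E(z,N)$, and confirming they are uniformly absorbed by the infinite-order vanishing of $\dbar\tilde\chi$ at the real axis; a secondary technical point is checking that the constants in Theorem \ref{Theorem:composition } depend only on the symbol seminorms of $f$ and $q(z)$, so that the composition bound is uniform for $z$ on compact subsets of $\C \setminus \R$.
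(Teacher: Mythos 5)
Your proposal follows essentially the same route as the paper's proof: the Helffer--Sj\"ostrand formula combined with a star-product parametrix for $z-T_{N,f}$ built from $(z-f)^{-1}\in S_\delta(m^{-1})$, whose polynomially growing $|\Im z|^{-L}$ constants are absorbed by the infinite-order vanishing of $\dbar\tilde\chi$ at the real axis, followed by Borel summation and the Cauchy integral formula to identify the principal symbol $\chi(f_0)$. The only cosmetic differences are that the paper seeds the construction with $(z-f_0)^{-1}$ (the principal symbol from Theorem \ref{thm.asmptotic}) rather than $(z-f)^{-1}$ and spells out the left/right-inverse matching, which are minor points of bookkeeping.
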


\begin{proof}

Let $\tilde \chi $ be an almost analytic extension of $\chi$ such that $\dbar_z \chi (z) = \mathcal{O} ( \abs{\Im z }^\infty) $. Because $f$ is real-valued, \cite[Lemma 5.1.3]{Floch} can be immediately adapted to $S_\delta (m)$ to see that $T_{N,f}$ is self-adjoint. By the Helffer-Sj\"{o}strand formula
\begin{align}
\chi(T_{N,f} ) = \frac{-1}{\pi} \int _\C \dbar _z \tilde \chi (z) (z - T_{N,f})^{-1} \dd m(z)
\end{align}
(see for instance \cite[Theorem 8.1]{Dimassi}). For $\Im z \neq 0$ we aim to construct an approximate inverse of $z - T_{N,f}$ using the parametrix construction stated in Theorem \ref{thm:parametrix construction}. But first a technical bound must be proven.
\begin{lemma}\label{claim.faa} If $s_1(z,x) = (z - f_0(x) )^{-1} $,  with $z\in \supp(\tilde \chi (z)) $ and $\Im{z} \neq 0$, then for all $\alpha \in \N^{2d}$
\begin{align}
|\p^\alpha _{x,\bar x} s_1 (z,x) |\lesssim \abs{\Im z } ^{-1 - |\alpha |} N^{ |\alpha | \delta }(m(x))^{-1}. \label{eq:943}
\end{align}
\end{lemma}
\begin{proof}
First we prove a lower bound of $|f(x) +z|$. Write $z = z_1 + iz_2$. Let $C_1 > 0$ be sufficiently large so that for $|z_1 - 1 | > C_1$ then $\tilde \chi (z) = 0$. Let $C_2 = C_0 - 1 + C_1 $ (possibly increasing $C_1$ so that $C_2 \ge 1$). We may assume that $|z_2|<C_3$ on the support of $\tilde \chi$ for some $C_3 > 1$. Then rearranging $f(x) > m(x) C_0^{-1 } - C_0$, we see that
\begin{align}
\frac{m(x)}{C_0} &< f(x)+1 - C_1 + C_2 < |f(x)+1| - |z_1-1| + C_2 \\
&< |f(x) + z_1| +C_2 < \frac{C_2 C_3 }{|z_2|} ( |f(x)+z_1| + |z_2| ) < \frac{2 C_2 C_3}{|z_2|} ( |f(x) + z_1 + i z_2|) .
\end{align}
Therefore:
\begin{align}
|f(x) +z | > \frac{1}{C_0 C_2 C_3} \abs{ \Im z} m(x) .
\end{align}
We can then apply the Fa\`{a} di Bruno formula in the same way as in the proof of Theorem \ref{thm:parametrix construction} to get that for all $\alpha \in \N^{2d}$
\begin{align}
\p^\alpha_{x,\bar x} s_1(z,x) \lesssim \abs{\Im z }^{-1-|\alpha| }(m(x))^{-1} N^{|\alpha | \delta }
\end{align}
which proves \eqref{eq:943}.
\end{proof}

We therefore have that $s_1 \in S_\delta (m^{-1} ) $, but with bounds depending on $\abs{\Im z } $. We can now apply Theorem \ref{thm:parametrix construction} to construct $s_2 (z,x) \in S_\delta (m^{-1})$ such that
\begin{align}
T_{N,z-f} \circ T_{N,s_2} + \mathcal{O}(N^{-\infty}) = T_{N,s_2} \circ T_{N,z-f} + \mathcal{O}(N^{-\infty}) = 1.
\end{align}

For any $J\in \N$, we can approximate $s_2$ by a finite sum of elements of $S_\delta (m^{-1})$ (denoted by $s_3$) such that
\begin{align}
T_{N,z-f} \circ  T_{N,s_3}  = 1 + \mathcal{O}(N^{-J}).
\end{align}
For such a symbol $s_3$, for any $\alpha \in \N^{2d}$, there exists $C = C(J,\alpha) > 0$ such that:
\begin{align}
|\p^\alpha _{x,\bar x} s_3(z,x) | \lesssim  N^{|\alpha | \delta} \abs{\Im z}^{-C} (m(x))^{-1}. \label{eq:1302}
\end{align}
%
%
%

Therefore:
\begin{align}
s_4 (x) : = \frac{-1}{\pi} \int _\C \dbar _z \tilde \chi (z) s_3 (z,x) \dd m(z)
\end{align}
exists for all $x$ because $\dbar _z \tilde \chi = \mathcal{O} (\abs{\Im z }^\infty )$. By differentiating $s_4$, applying \eqref{eq:1302}, and using $\dbar _z \tilde \chi = \mathcal{O} (\abs{\Im z }^\infty )$, we also see that $s_4 \in S_\delta (m^{-1})$. We finally check that $T_{N, s_4}$ is an approximation of $\chi (T_{N,f} )$. Suppose $u\in H^0(X,L^N)$, then for $x\in X$
\begin{align}
T_{N, s_4} [u](x) &= \Pi_N \left [ \frac{-1}{\pi} \int _\C \dbar _z \tilde \chi (z) s_3 (z,x) u(x) \dd m(z) \right ] \\
&= \frac{-1}{\pi} \int_\C \dbar_z \tilde \chi (z) \Pi_N [s_3(z,x) u (x) ] \dd m (z) \\
&= \left ( \frac{-1}{\pi} \int_\C \dbar_z \tilde \chi (z) T_{N,s_3} \dd m (z) \right ) [u] \\
&= \left ( \frac{-1}{\pi} \int_\C \dbar_z \tilde \chi (z) ( (z - T_{N,f} )^{-1} +\mathcal{O}(N^{-J}) )\dd m (z) \right ) [u] .
\end{align}
Therefore $T_{N,s_4} = \chi (T_{N,f} ) + \mathcal{O} (N^{-J})$. Since $J$ was arbitrary, by Borel's theorem, there exists $g\in S_\delta (m^{-1} ) $ such that
\begin{align}
 \chi (T_{N,f}) = T_{N,g} + \mathcal{O} (N^{-\infty}).
 \end{align} 
 The principal symbol can be easily computed. Unraveling the above, the principal symbol of $s_3$ is $(z - f_0 (x) )^{-1}$, so that the principal symbol of $g$ is:
 \begin{align}
 \frac{-1}{\pi} \int_\C \dbar _z \tilde \chi (z) (z - f_0 (x) )^{-1 } \dd m(z) = \chi(f_0 )
 \end{align}
by the Cauchy integral formula. \end{proof}

This can be generalized for Toeplitz operators with a negligible term.
\begin{theorem}
Suppose $\delta,m,f$ satisfy the hypotheses of Theorem \ref{thm:functional calculus}, and $\set{R_N}_{N\in \N}$ is a family of linear operators mapping $H^0(X,L^N)\to H^0 (X,L^N)$ such that $\norm{R_N} = \mathcal{O} (N^{-\infty})$ and ${T_{N,f}} + R_N$ are self-adjoint for all $N$. Then for any $\chi \in C_0^\infty (\R ; \C)$, there exists $g\in S_\delta (m^{-1})$ such that:
\begin{align}
\chi (T_{N,f} + R_N ) = T_{N,g} + \mathcal{O}(N^{-\infty})
\end{align}
and the principal symbol of $g$ is $\chi(f_0) $ where $f_0$ is the principal symbol of $f$.
\end{theorem}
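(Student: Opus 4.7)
The plan is to reduce this theorem to Theorem \ref{thm:functional calculus} via a second-resolvent identity, exploiting the fact that $R_N$ is negligible and that the Helffer-Sj\"ostrand formula integrates against $\dbar \tilde \chi$, which vanishes to infinite order on the real axis.

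\textbf{Step 1 (Setup via Helffer-Sj\"ostrand).} Since both $T_{N,f}$ (as $f$ is real-valued) and $T_{N,f}+R_N$ are self-adjoint, for any $z$ with $\Im z \neq 0$ the resolvents $(z-T_{N,f})^{-1}$ and $(z-T_{N,f}-R_N)^{-1}$ exist with norm bounded by $|\Im z|^{-1}$. Choosing an almost analytic extension $\tilde \chi \in C^\infty_0(\C)$ of $\chi$ with $\dbar \tilde \chi(z) = \mathcal{O}(|\Im z|^\infty)$, the Helffer-Sj\"ostrand formula gives
\begin{align}
\chi(T_{N,f}+R_N) - \chi(T_{N,f}) = \frac{-1}{\pi}\int_\C \dbar_z \tilde \chi(z)\bigl((z-T_{N,f}-R_N)^{-1} - (z-T_{N,f})^{-1}\bigr)\dd m(z).
\end{align}

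\textbf{Step 2 (Resolvent identity and norm bound).} The second resolvent identity gives
\begin{align}
(z-T_{N,f}-R_N)^{-1} - (z-T_{N,f})^{-1} = (z-T_{N,f})^{-1} R_N (z-T_{N,f}-R_N)^{-1},
\end{align}
whose operator norm is bounded by $|\Im z|^{-2}\,\|R_N\|$. Since $\|R_N\| = \mathcal{O}(N^{-\infty})$ and $|\dbar \tilde \chi(z)|\,|\Im z|^{-2}$ is integrable by the infinite-order vanishing of $\dbar \tilde \chi$ on the real axis, we conclude
\begin{align}
\chi(T_{N,f}+R_N) = \chi(T_{N,f}) + \mathcal{O}(N^{-\infty})
\end{align}
in operator norm on $H^0(X,L^N)$.

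\textbf{Step 3 (Applying Theorem \ref{thm:functional calculus}).} By Theorem \ref{thm:functional calculus}, there exists $g \in S_\delta(m^{-1})$ with principal symbol $\chi(f_0)$ such that $\chi(T_{N,f}) = T_{N,g} + \mathcal{O}(N^{-\infty})$. Combining with Step 2 yields the claim with the same $g$, and hence the same principal symbol $\chi(f_0)$.

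\textbf{Anticipated obstacle.} The only nontrivial point is confirming that the integrated resolvent difference is genuinely $\mathcal{O}(N^{-\infty})$ and not merely small pointwise in $z$; this is handled by the observation that $\dbar\tilde\chi$ absorbs the $|\Im z|^{-2}$ singularity uniformly and that $\tilde\chi$ has compact support, so the integral in Step 2 is finite and inherits the full $\mathcal{O}(N^{-\infty})$ decay from $\|R_N\|$. No new symbolic estimates on $g$ are needed, since $g$ is the same symbol produced by the unperturbed functional calculus.
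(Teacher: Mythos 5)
Your proposal is correct and follows essentially the same route as the paper: the Helffer--Sj\"ostrand formula combined with the second resolvent identity $(z-T_{N,f})^{-1}-(z-T_{N,f}-R_N)^{-1}=(z-T_{N,f})^{-1}R_N(z-T_{N,f}-R_N)^{-1}$, the $\mathcal{O}(N^{-\infty})$ bound on $R_N$ absorbing the resolvent singularities against $\dbar\tilde\chi$, and then an appeal to Theorem \ref{thm:functional calculus} for the unperturbed operator.
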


\begin{proof}
Let $\tilde \chi$ be an almost analytic extension of $\chi$, so that
\begin{align}
\chi (T_{N,f} + R_N ) = \frac{-1}{\pi} \int_\C \dbar _z \tilde \chi(z) (z - T_{N,f} - R_N) ^{- 1} \dd m(z).
\end{align}
But note that
\begin{align}
(z - T_{N,f} )^{-1 } - (z - T_{N,f} - R_N) ^{-1} = (z - T_{N,f})^{-1} R_N (z - T_{N,f} - R_N)^{-1}.\label{eq:968}
\end{align}
Both $ (z - T_{N,f})^{-1} $ and $ (z - T_{N,f} - R_N)^{-1}$ have operator norm controlled by $N$ to some finite power, so that the right-hand side of \eqref{eq:968} is $\mathcal{O}(N^{-\infty})$. Therefore:
\begin{align}
\chi (T_{N,f} + R_N ) = \frac{-1}{\pi} \int_\C \dbar _z \tilde \chi(z) (z - T_{N,f} ) ^{- 1} \dd m(z) + \mathcal{O}(N^{-\infty})
\end{align}
and we just follow the rest of the proof of Theorem \ref{thm:functional calculus}.
\end{proof}

\subsection{Trace Formula}
A critical result required in proving a probabilistic Weyl-law for Toeplitz operators in \cite{Oltman} is a trace formula. Fortunately, this is straightforward to compute by the explicit kernel expansion described in Theorem \ref{thm.asmptotic}.

\begin{theorem}[\textbf{Trace Formula}]\label{thm:trace formula}
For $\delta \in [0,1/2)$, suppose $m$ is a $\delta$-order function on $X$ (a quantizable K\"ahler manifold). Then if $f\in S_\delta (m)$,
\begin{align}
\Tr( T_{N,f}) = \left ( \frac{N}{2\pi} \right )^d \int _X f_0(x) \dd\mu(x) + \left ( \int_X m(x) \dd\mu (x) \right ) \mathcal{O}(N^{d-(1-2\delta)}) .\label{eq:1418}
\end{align}
\end{theorem}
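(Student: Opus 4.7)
The plan is to express the trace as an integral of the Schwartz kernel along the diagonal, then plug in the asymptotic expansion from Theorem \ref{thm.asmptotic} and use that $\psi(x,\bar x) = \phi(x)$ so that the weight $e^{-N\phi(x)}$ cancels the phase $e^{N\psi(x,\bar x)}$ on the diagonal. Concretely, since $T_{N,f}$ has kernel $T_{N,f}(x,\bar y)$ with respect to the inner product involving $e^{-N\phi(y)}\dd\mu(y)$, we have
\begin{align}
\Tr T_{N,f} = \int_X T_{N,f}(x,\bar x)\, e^{-N\phi(x)} \dd\mu(x),
\end{align}
which can be verified by an orthonormal basis computation in each chart together with the global cutoff argument already appearing in the proof of Theorem \ref{thm.asmptotic}.

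Next I would apply the alternate form \eqref{eq:asymp_better} of the kernel expansion with $J$ chosen large enough that $J(1-2\delta) > 1 + 2\delta M_0$, giving
\begin{align}
T_{N,f}(x,\bar x)\, e^{-N\phi(x)} = \left(\frac{N}{2\pi}\right)^d \sum_{j=0}^{J-1} N^{-j} f_j(x,\bar x) + \mathcal{O}\!\left(N^{d-(1-2\delta)}\right),
\end{align}
using $\psi(x,\bar x) = \phi(x)$ to trivialize the exponential on the diagonal. Since $f_0(x,\bar x) = f(x)$ by Theorem \ref{thm.asmptotic}, the leading term integrates to $(N/2\pi)^d \int_X f(x)\dd\mu(x)$, which matches the stated principal term (with $f_0$ read as the principal symbol).

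For the remainder, I would use the symbol-class bound $f_j(x,\bar x) \in N^{2\delta j} S_\delta(m(x))$ established in Theorem \ref{thm.asmptotic}, which gives the pointwise estimate $|f_j(x,\bar x)| \lesssim N^{2\delta j} m(x)$. Hence for each $j \geq 1$,
\begin{align}
\left(\frac{N}{2\pi}\right)^d N^{-j} \int_X |f_j(x,\bar x)|\dd\mu(x) \lesssim N^{d - j(1-2\delta)} \int_X m(x)\dd\mu(x),
\end{align}
and since $\delta < 1/2$, the dominant subleading contribution is the $j=1$ term, of size $N^{d-(1-2\delta)}\int_X m\dd\mu$. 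The higher-$j$ terms and the $\mathcal{O}(N^{d - J(1-2\delta) + 2\delta M_0})$ remainder (bounded by a multiple of $\int_X m\dd\mu$ after integration, since the implicit constant is uniform in $x$) are strictly smaller for the chosen $J$, so they are absorbed into the same big-$\mathcal{O}$.

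The only real subtlety is making sure the implicit constant in the remainder of \eqref{eq:asymp_better} can be bounded pointwise by $m(x)$, so that integration in $x$ produces exactly the factor $\int_X m\dd\mu$; this follows from revisiting the proof of Theorem \ref{thm.asmptotic}, where the remainder $R_J$ belongs to $N^{2\delta J + \delta M_0} S_\delta(m(x))$ on the diagonal and the off-diagonal $\mathcal{O}(N^{-\infty})$ term is already pointwise negligible. Everything else is bookkeeping: choosing $J$ large, invoking $f_0(x,\bar x) = f(x)$, and summing the geometric progression of subleading terms.
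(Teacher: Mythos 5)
Your proposal is correct and follows essentially the same route as the paper: write $\Tr T_{N,f}=\int_X T_{N,f}(x,\bar x)e^{-N\phi(x)}\dd\mu(x)$, insert the diagonal expansion from Theorem \ref{thm.asmptotic} (where $\psi(x,\bar x)=\phi(x)$ cancels the weight), bound the $j\ge 1$ terms by $N^{-j(1-2\delta)}m(x)$, and control the remainder via the diagonal bound $R_J\in N^{2\delta J+\delta M_0}S_\delta(m(x))$ from \eqref{eq:R_J bound} with $J$ large. The subtlety you flag about getting a pointwise $m(x)$ bound (rather than the uniform remainder in \eqref{eq:asymp_better}) is exactly how the paper handles it, so no changes are needed.
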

\begin{proof}
By \cite[Proposition 6.3.4]{Floch} and Theorem \ref{thm.asmptotic} (specifically \eqref{eq:asymp_better} with $J=1$), 
\begin{align}
\Tr (T_{N,f} ) &= \int_X T_{N,f}(x,\bar x) e^{-N\phi(x)} \dd\mu (x)\\
&= \left (\frac{N}{2\pi}\right )^d \int_X \left ( f_0(x,\bar x) + \mathcal{O}(N^{-(1-2\delta)} m(x) ) \right ) \dd \mu(x).
\end{align}
By Theorem \ref{thm.asmptotic}, $f_0(x,\bar x)$ is $f_0(x) + \mathcal{O}( N^{-(1-2\delta)}m(x))$ and so \eqref{eq:1418} follows.
\end{proof}

\appendix

\section{Computation of the second term in the star product}\label{Appendix:1}

The goal of this section is to compute the second term in the star product of Toeplitz operators. Indeed, by Theorem \ref{Theorem:composition } we know that if $f$ and $g$ are symbols in $S_\delta (m)$, then there exists a symbol $h\sim \sum N^{-j} h_j$, such that $T_{N,f} \circ T_{N,g} = T_{N,h} + \mathcal{O} (N^{-\infty})$. It is straightforward to show that $h_0 = fg$ (modulo $\mathcal{O}(N^{-(1-2\delta)})$). This section directly computes $h_1$ (modulo $\mathcal{O} (N^{-2(1-2\delta)}m)$ error).

In this section, for vectors $u,v\in \C^d$, we write $\ip{u}{v} := \sum u_i v_i$. For functions $f\in C^\infty(\C^d)$, we denote by $\nabla_x f$ the vector in $\C^d$ whose $j^{th}$ component is $\partial_{x_j} f $. We similarly denote by $\nabla _{\bar x} f$ the vector whose $j^{th}$ component is $\dbar _{x_j} f$.

\begin{theorem}\label{thm:second_term}
Given $\delta \in [0,1/2)$, suppose $m_1,m_2$ are two $\delta$-order functions on $X$ (a quantizable K\"ahler manifold with K\"ahler potential $\phi$), $f\in S_\delta (m_1)$, $g\in S_\delta (m_2$), and $h = f \star g$. Then locally
{
\begin{equation}\label{eq:apend}\begin{split}
h(x) = f(x)g(x)  - {N^{-1}} \sum _{j,k =1}^d ( \p \dbar \phi (x))^{j,k}& \p_k f (x) \dbar _j g (x) \\
&+ \mathcal{O} (N^{-2(1-2\delta)} m_1(x)m_2(x) ) , \end{split}
\end{equation}}
where $ ( \p \dbar \phi (x))^{j,k} \in \C^{d\times d}$ is such that $\sum _k( \p \dbar \phi (x))^{j,k} (\p_k \dbar _\ell \phi(x) ) = \delta _{j,\ell} $.
\end{theorem}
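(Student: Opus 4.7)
The plan is to extract $h_1$ by matching the $N^{-1}$ coefficients in the stationary-phase expansions of $T_{N,f}\circ T_{N,g}(x,\bar x)$ and $T_{N,h}(x,\bar x)$, using Theorem \ref{thm.asmptotic} on both sides. First, I will rewrite the composition as an integral, $T_{N,f}\circ T_{N,g}(x,\bar x) = \int T_{N,f}(x,\bar w) T_{N,g}(w,\bar x) e^{-N\phi(w)}\,d\mu(w)$, and substitute the asymptotic expansion of each Toeplitz kernel. Locally this becomes $(N/2\pi)^{2d}\int e^{N\Phi(w)} A(w)\,d\mu(w)$ with $\Phi(w) = \psi(x,\bar w) + \psi(w,\bar x) - \phi(w)$ and amplitude $A(w) = f_0(x,\bar w) g_0(w,\bar x) + N^{-1}(f_0 g_1 + f_1 g_0) + \mathcal{O}(N^{-2})$. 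The polarization identities for the almost-analytic extension $\psi$ (from $\psi(x,\bar x)=\phi(x)$) yield $\Phi(x+v) = \phi(x) - \phi_{i\bar j}(x) v_i \bar v_j + \mathcal{O}(|v|^3)$, so $w=x$ is the unique critical point with purely mixed complex Hessian $-\partial\bar\partial\phi(x)$.

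Second, I will compute the relevant amplitude derivatives at the critical point. Since $f_0$ is almost analytic off $\{y=\bar x\}$ and $f_0(x,\bar x)=f(x)$, the chain rule gives $\partial_{w_j} f_0(x,\bar w)|_{w=x} = 0$ and $\bar\partial_{w_j} f_0(x,\bar w)|_{w=x} = \bar\partial_j f(x)$, with symmetric identities for $g_0$; hence $\partial_{w_i}\bar\partial_{w_j}[f_0(x,\bar w) g_0(w,\bar x)]|_{w=x} = \bar\partial_j f(x)\,\partial_i g(x)$. The Gaussian moments for the quadratic form are $\int v_i \bar v_j\, e^{-N\phi_{k\bar l}v_k\bar v_l}\,dv \big/ \int e^{-N\phi_{k\bar l}v_k\bar v_l}\,dv = N^{-1}(\partial\bar\partial\phi)^{j,i}$ (with the index swap arising from writing the exponent as $\phi_{l\bar k}\bar v_k v_l$), while pure $v v$ and $\bar v \bar v$ moments vanish. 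Complex Melin--Sj\"ostrand stationary phase then produces, at order $N^{-1}$, the direct bilinear term $(\partial\bar\partial\phi)^{j,i}\bar\partial_j f\,\partial_i g$, a universal correction $\Theta[A]$ coming from the cubic/quartic Taylor terms of $\Phi$ together with derivatives of $\mu = 2^d\det(\partial\bar\partial\phi)$, and the subleading amplitude contributions $f_1(x,\bar x) g + f g_1(x,\bar x)$.

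Third, I apply identical stationary phase to $T_{N,fg}(x,\bar x)$, whose amplitude is $B(x,\bar z)B(z,\bar x)(fg)(z)$ with $B = 1 + N^{-1}b_1 + \cdots$ from the Bergman expansion. Since $\Phi$ and $\mu$ are the same, the universal operator $\Theta$ is the same; the consistency relation $\Theta_0 = -b_1$ (where $\Theta_0$ is the multiplicative part) is obtained by specializing the formula to $f\equiv 1$ and using $T_{N,1} = \Pi_N$ with the known Bergman expansion $\Pi_N(x,\bar x) = (N/2\pi)^d e^{N\phi(x)}(1 + N^{-1}b_1(x,\bar x) + \cdots)$. Expanding $f_1(x,\bar x), g_1(x,\bar x), (fg)_1(x,\bar x)$ via the single-Toeplitz stationary phase, applying the Leibniz rule $\partial_i\bar\partial_j(fg) = (\partial_i\bar\partial_j f)g + (\partial_i f)(\bar\partial_j g) + (\bar\partial_j f)(\partial_i g) + f(\partial_i\bar\partial_j g)$, and observing that all symmetric-in-$(f,g)$ contributions cancel between the two expansions leaves the asymmetric remainder
\begin{equation*}
h_1(x) = (\partial\bar\partial\phi)^{j,i}\bar\partial_j f\,\partial_i g - (\partial\bar\partial\phi)^{j,i}\bigl[(\partial_i f)(\bar\partial_j g) + (\bar\partial_j f)(\partial_i g)\bigr] = -(\partial\bar\partial\phi)^{j,i}(\partial_i f)(\bar\partial_j g),
\end{equation*}
which after relabeling $(j,i) \mapsto (j,k)$ matches the stated formula.

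The main obstacle is the careful bookkeeping of all $N^{-1}$ contributions from complex stationary phase and verifying that every symmetric-in-$(f,g)$ piece (phase/density corrections, first-derivative amplitude interactions with the cubic phase, and subleading Bergman terms) cancels cleanly between the composition expansion and the single-Toeplitz expansion of $T_{N,fg}$. This cancellation relies critically on the consistency identity $\Theta_0 = -b_1$ obtained from the special case $f\equiv 1$; without it, spurious terms proportional to $fg$ would survive, and identifying them as coming from a single universal operator requires tracking the phase $\Phi$ to fourth order along with first-order derivatives of $\mu$.
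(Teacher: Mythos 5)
Your overall route is the same as the paper's: expand both $T_{N,f}\circ T_{N,g}(x,\bar x)$ and $T_{N,h}(x,\bar x)$ by stationary phase with the common phase $\psi(x,\bar w)-\phi(w)+\psi(w,\bar x)$, match the $N^{-1}$ coefficients on the diagonal, use almost-analyticity of $f_0,g_0$ to reduce the mixed second derivative of $f_0(x,\bar w)g_0(w,\bar x)$ to $\dbar_j f\,\p_i g$, and pin down the multiplicative part of the subleading operator by the special case $f\equiv 1$, $T_{N,1}=\Pi_N$. However, there is a genuine gap in your cancellation step. In arbitrary holomorphic coordinates the order-$N^{-1}$ stationary-phase operator $\mathcal{L}$ is not just (mixed Hessian inverse)$\cdot\p\dbar$ plus a scalar: the mixed third derivatives of the phase (which equal $-\p\p\dbar\phi$ and $-\p\dbar\dbar\phi$ at the critical point and do \emph{not} vanish in general coordinates) together with $\nabla\det(\p\dbar\phi)$ produce a genuine first-order part $P=c^i\p_i+\bar c^j\dbar_j$. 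If you carry $P$ through your matching identity $h_1= g\mathcal{L}[f]+f\mathcal{L}[g]+\mathcal{L}[f_0(x,\bar\cdot)g_0(\cdot,\bar x)]-\mathcal{L}[fg]+2b_1fg$, the holomorphy structure gives $P[f_0g_0]=c^i f\p_i g+\bar c^j g\dbar_j f$, and the four $P$-contributions do \emph{not} cancel pairwise; the leftover is $c^i f\p_i g+\bar c^j g\dbar_j f$, which is neither symmetric in $(f,g)$ nor removed by your consistency identity $\Theta_0=-b_1$ (the $f\equiv1$ case only fixes the zeroth-order, multiplicative part of the universal correction, since $P[1]=0$). So your final formula is only correct if $c=\bar c=0$, which you have not established.

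This is exactly the point the paper's proof is organized around: it computes in K\"ahler normal coordinates at the point of evaluation, where $\p_z^\alpha\p_j\dbar_k\phi(x_0)=0$ for $|\alpha|=1$, so both the relevant cubic terms of $\mathfrak{g}$ and $\nabla\det H$ vanish and the H\"ormander coefficient $L_1$ has no first-order part; the bilinear answer is then transported back to arbitrary coordinates by the Jacobian, producing $(\p\dbar\phi)^{-1}$. To repair your argument you must either adopt normal coordinates in the same way, or prove directly that $P\equiv 0$ in arbitrary holomorphic coordinates (equivalently, that the diagonal expansion of $T_{N,f}(x,\bar x)$ at order $N^{-1}$ is $b_1 f$ plus the K\"ahler Laplacian of $f$ with no first-order derivative terms); the latter is true but requires tracking the precise cancellation between the cubic phase terms and the density derivatives, which your proposal asserts rather than verifies.
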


\begin{rem}
From this, we get the classical-quantum correspondence of Toeplitz operators. Indeed, by \eqref{eq:apend} the principal symbol of $[T_{N,f} , T_{N, g } ] $ is
\begin{align}
N ^{-1} ( -\ip{(\p \dbar \phi)^{-1} \nabla_x f }{\nabla _{\bar x}  g} + \ip{(\p \dbar \phi)^{-1} \nabla _x g }{\nabla _{\bar x } f}). \label{eq:anti}
\end{align}
Note that the Poisson bracket of $f$ and $g$ is $\{ f , g \} = \omega (X_f , X_g)$, where $X_f$ and $X_g$ are the Hamiltonian vector fields of $f$ and $g$ respectively and $\omega$ is the symplectic form on $X$. If we write $\omega = \sum W_{i,j} dz_i\land d\bar z_j$, then
\begin{align}
\dbar_{x_i} f = \sum_{j=1}^{d} W_{j,i} dz_j (X_f) ,
 && \p_{x_i} f = - \sum_{j=1}^{d} W_{i,j} d\bar z_j (X_f),
\end{align}
with identical identities relating $g$ and $X_g$. Therefore
\begin{align}
\omega(X_f , X_g) &= \sum _{i,j} W_{i,j} dz_i \land d\bar z _j (X_f , X_g) \\
&= \sum _{i,j} W_{i,j} ( dz_i (X_f) d\bar z_j (X_g) - dz_i (X_g) d\bar z_j (X_f)) \\
&= \ip{W^t (W^t)^{-1} \nabla _{\bar x} f }{- W^{-1} \nabla_x g} - \ip{W^t (W^t)^{-1} \nabla _{\bar x} g }{-W^{-1} \nabla_x f} \\
&= \ip{W^{-1} \nabla _x f}{\nabla _{\bar x} g} - \ip{W^{-1} \nabla _x g }{\nabla _x f}.
\end{align}
Now, because $W_{i,j} = i \p_i \dbar _j \phi $, we see from \eqref{eq:anti} that
\begin{align}
[T_{N,f}, T_{N,g} ] = \frac{1}{ iN} T_{N,\{ f , g \}} + \mathcal{O} (N^{-2(1-2\delta) } m ) .
\end{align}
\end{rem}

The method to prove Theorem \ref{thm:second_term} is to compute the Schwartz kernel of the asymptotic expansion of $T_{N,f} \circ T_{N,g}$ and find a symbol that agrees with this kernel. By the almost analytic properties of the kernel, it suffices to work exclusively on the diagonal. Along the diagonal, the method of stationary phase has more explicit formulae. This section will use a stationary phase expansion presented in \cite{Hormander}.

\begin{proof}
Estimates on the error term in \eqref{eq:apend} were established in Theorem \ref{thm.asmptotic}. For a simpler proof, we assume that $f,g\in S_0(1)$.

Near $x_0 \in X$, we choose a normal coordinate system $(z^1 (x) ,\dots, z^d(x)) \in \C^d$. In this way, $\p_{z_j} \p_{\bar z_k} \phi(z(x_0) ) = \delta _{j,k}$ and $\p_{z,\bar z}^\alpha \p_{z_j} \p_{\bar z_k} \phi(z(x_0)) = 0$ for all $j,k = 1,\dots , d$ and $\alpha\in \N^{2d}$ with $|\alpha | = 1$.

Let $C_j$ be the differential operators coming from stationary phase:
\begin{align}
\left ( \frac{N}{2\pi} \right )^{2d} \int_{\C^d} u(w) e^{N \Phi_{x\bar y} (w) }\dd \mu (w) \sim \left ( \frac{N}{2 \pi} \right )^d e^{N \psi(x,\bar y)} \sum _0^\infty N^{-j} C_j [u](x,\bar y),\label{eq.24}
\end{align}
with $u\in C^\infty (\C^d ; \C)$, $\Phi _{x,\bar y} (w) = \psi(x,\bar w) - \phi(w) +\psi(w,\bar y)$, and $\mu (w) = \omega^{\land d} / d!$. When computing $T_{N,f}$, terms of order $\mathcal{O}(N^{-2})$ are not needed to compute the second term in the expansion. In this case the functions coming from the Bergman kernel expansion (recall \S \ref{subsection:Bergman kernels}) can be approximated as $B(x,\bar y) = 1 + N^{-1} b_1 (x,\bar y) + \mathcal{O} (N^{-2})$, so that the amplitude in the kernel of $T_{N,f}$ is $f(w) (1 + N^{-1} (b_1 (x,\bar w) +b_1 (w,\bar y) ) + \mathcal{O} (N^{-2} ) $. In this way:
\begin{align}
f_0 (x,\bar y) &= C_0[f](x,\bar y), \\
f_1 (x,\bar y) &= C_0[f(\cdot ) (b_1(x,\cdot ) + b_1(\cdot , \bar y) ] (x,\bar y) + C_1 [f](x,\bar y),
\end{align}
and on the diagonal:
\begin{align}
f_0 (x,\bar x) &= f(x),\\
f_1 (x,\bar x) &= 2f(x)b_1(x)+ C_1 [f](x,\bar x). \label{eq.identity}
\end{align}
If we are given $T_{N,f}$ and $T_{N,g}$, then the first term in the expansion of $T_{N,f} \circ T_{N,g}$ along the diagonal will be $f(x) g(x)$. While the second term is
\begin{align}
C_0 [f_1 (x,\cdot ) g_0 (\cdot, \bar y) + f_0 (x, \cdot ) g_1 (\cdot ,\bar y ) ] (x,\bar y) + C_1 [f_0 (x,\cdot )g_0 (\cdot, \bar y) ] (x,\bar y). 
\end{align}
Along the diagonal, this is
\begin{align}
(2fb_1 + C_1 [f] ) g + f(2gb_1 + C_1 [g]) + C_1 [f_0 (x,\cdot ) g_0 (\cdot , \bar x ) ], \label{s1}
\end{align}
with all $C_j$ operators evaluated at $(x,\bar x)$ and functions evaluated at $x$. Suppose $T_{N,f} \circ T_{N,g} = T_{N,h} + \mathcal{O} (N^{-\infty})$ for some $ h\sim\sum N^{-j} h_j$. The $N^0$ order term of $T_{N,h} (x,\bar x)$ is $C_0 (h_0)(x) = h_0 (x)$, so that $h_0(x) = f(x) g(x)$. The $N^{-1}$ order term is
\begin{align}
C_0 [ h_0(b_1 (x,\cdot ) + b_1 (\cdot ,\bar y) ) + h_1 ] (x,\bar y)+ C_1 [h_0](x,\bar y).
\end{align}
Along the diagonal this is:
\begin{align}
2h_0(x) b_1(x) + h_1(x) +C_1 [fg](x,\bar x). \label{eq:1114}
\end{align}
Setting \eqref{eq:1114} equal to \eqref{s1}, and solving for $h_1$ gives the relation
\begin{align}
h_1 (x)& = 4fgb_1 -2fgb_1-C_1 [fg] + C_1 [f_0 (x,\cdot ) g_0 (\cdot ,\bar x)] + C_1 [f] g + f C_1 [g]\\
&= 2fg b_1 + C_1 [f_0 (x,\cdot ) g_0 (\cdot , \bar x) -f(\cdot )g(\cdot ) ] + g C_1 [f] +fC_1 [g] \label{eq.final}
\end{align}
with all $C_j$ operators evaluated at $(x,\bar x)$, $f,g$ evaluated at $x$, and $b_1$ evaluated at $(x,\bar x)$.

Recall that $i\p \dbar \phi = \omega$ and in normal coordinates $\omega(x) = iH $ with $H$ a positive definite, real, self-adjoint, invertible matrix, such that $H(x_0 ) = 1$.
\begin{lemma}
On the diagonal:
\begin{align}
C_1 [u] = L_1 [u \det H] 
\end{align}
with $L_1 = -\ip{ \nabla _z}{\nabla _{\bar z }}+ A$, and
\begin{align} 
A = -2^{-1}\ip{ \nabla _z }{\nabla _{\bar z}} \det H(x_0) - b_1(x_0)
\end{align}
\end{lemma}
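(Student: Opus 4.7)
The plan is to apply H\"ormander's stationary phase expansion (\citep[Theorem 7.7.5]{Hormander}) to the integral in \eqref{eq.24}, working in K\"ahler normal coordinates centered at $x_0$. In such coordinates, $H(x_0) = I$, all first-order derivatives of $H$ at $x_0$ vanish, and $\phi(w) = |w|^2 + \mathcal{O}(|w|^4)$. A short direct computation then shows that $\Phi_{x_0,\bar x_0}(w) = \psi(x_0,\bar w) - \phi(w) + \psi(w,\bar x_0)$ reduces to $-|w|^2 + \mathcal{O}(|w|^4)$ near $w = x_0$, with vanishing cubic Taylor coefficients and a real Hessian (in coordinates $p = (\Re{w},\Im{w})$) equal to $-2I_{2d}$. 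Only the quartic part of $\Phi$ will contribute non-trivial phase corrections.

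Next, rewrite the integral as $(N/2\pi)^{2d}\int \tilde a(p) e^{N\Psi(p)}\dd p$ with amplitude $\tilde a(p) := 2^d u(w(p)) \det H(w(p))$, absorbing the factor $2^d \det H$ from the Liouville form $\mu$. H\"ormander's formula expresses the $N^{-1}$ coefficient of the expansion as the sum of (i) the action of a second-order operator built from $\tfrac12\ip{(-\Psi''(p_c))^{-1} D_p}{D_p}$ on $\tilde a$ at $p_c$, which after conversion from real to complex derivatives is (up to an absorbed factor) $-\ip{\nabla_z}{\nabla_{\bar z}}(u \det H)(x_0)$; and (ii) a scalar correction proportional to $\tilde a(p_c) = 2^d u(x_0)$ coming from the quartic terms of $\Psi$ at $p_c$. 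Grouping these gives $C_1[u](x_0) = L_1[u\det H](x_0)$ for some $L_1 = -\ip{\nabla_z}{\nabla_{\bar z}} + A$.

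To fix $A$, apply the identity to $u\equiv 1$. The kernel identity $\Pi_N = \Pi_N^2 = \int e^{N\Phi_{x,\bar y}} B(x,\bar w)B(w,\bar y)\,\dd\mu$, combined with the Bergman kernel expansion, forces
\begin{align}
C_0[b_1(x,\cdot)+b_1(\cdot,\bar x)](x,\bar x) + C_1[1](x,\bar x) = b_1(x,\bar x),
\end{align}
and since $C_0[v] = v|_{w_c}$, while $w_c = x$ on the diagonal, this gives $C_1[1](x,\bar x) = -b_1(x)$. Substituting into $C_1[1](x_0) = L_1[\det H](x_0)$, using that $\det H(x_0) = 1$ and that first derivatives of $\det H$ vanish at $x_0$ in normal coordinates, then determines $A$.

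The main technical obstacle is the bookkeeping of factors of $2$ and the careful separation of contributions: the $-\tfrac12\ip{\nabla_z}{\nabla_{\bar z}}\det H(x_0)$ piece in $A$ comes from the quartic-phase correction of stationary phase, while the Leibniz expansion of $-\ip{\nabla_z}{\nabla_{\bar z}}(u\det H)$ at $x_0$ contributes its own $u(x_0)\ip{\nabla_z}{\nabla_{\bar z}}\det H(x_0)$ term (since the gradient cross terms vanish). These pieces must combine with the factors of $2$ introduced by the complex-to-real coordinate change and by $\mu = 2^d \det H\,\dd m$ in such a way that the $u \equiv 1$ self-consistency check reproduces $C_1[1](x_0) = -b_1(x_0)$; noting that both $\ip{\nabla_z}{\nabla_{\bar z}}\det H(x_0)$ and $b_1(x_0)$ are proportional to the scalar curvature at $x_0$ gives a useful independent check on the computation.
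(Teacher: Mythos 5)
Your proposal follows essentially the same route as the paper's proof: H\"ormander's Theorem 7.7.5 applied in K\"ahler normal coordinates, where $L_1$ collapses to a constant-coefficient second-order operator plus a scalar $A$ (the gradient term dying because first derivatives of $H$ vanish at $x_0$), the identification $C_1[u]=L_1[u\det H]$ coming from $\dd\mu = 2^d\det H\,\dd m$ together with the Hessian determinant factor, and $A$ pinned down by taking $u\equiv 1$ so that $T_{N,1}=\Pi_N$ forces $C_1[1](x,\bar x)=-b_1(x)$, exactly as in the paper. The only divergence is in the explicit constants you leave hedged (your real Hessian $-2I_{2d}$ versus the paper's stated $-4\,\mathrm{Id}$, hence the coefficient of $\ip{\nabla_z}{\nabla_{\bar z}}$), a bookkeeping point on which the paper itself wavers between the lemma statement and \eqref{eq:1222}, so it does not affect the assessment that the argument is the same.
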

\begin{proof}
By \cite[Theorem 7.7.5]{Hormander}
\begin{align}
\int_{\R^{2d}} u(w) e^{N \Phi _{x,\bar x} (w)} \dd w \sim \dfrac{e^{N \Phi _{x,\bar x} (x)} (2\pi)^{d} }{\sqrt{\det ( - N \Phi '' _{x \bar x} (x))}} \sum _{j=0}^\infty N^{-j} L_j u, \label{eq:1156}
\end{align}
with:
\begin{align}
L_1 u = \sum_{\nu = 1}^3 i ^{-1 + \nu } 2^{-\nu}\ip{(\Phi ''_{x, \bar x})^{-1} (x) D}{D}^\nu (\mathfrak{g}_x^{\nu-1} u ) (w) / ((\nu-1)! \nu! ) , \label{eq.L1}
\end{align}
with derivatives evaluated at $x$, and
\begin{align}
\mathfrak{g}_x (w) = \Phi_{x, \bar x} (w) / i - \Phi_{x, \bar x} (x) /i + \ip{i \Phi_{x, \bar x} (x) '' (w-x) }{w-x} /2 .
\end{align}

By computation
\begin{align}
\Phi''_{x_0,\bar x_0} (x_0) = -4 \mat{ 1 & 0 \\ 0 & 1},
\end{align}
so that
\begin{align}
\det (- N \Phi''_{x_0 ,\bar x_0} (x_0) )^{-1/2} = (4N)^{-d}.
\end{align}

Therefore, \eqref{eq:1156} simplifies to:
\begin{align}
\left( \frac{\pi}{2N}\right )^d \sum _{j=0}^\infty N^{-j} L_j (u) \label{eq:1213}
\end{align}
Observe that $\ip{\Phi '' _{x_0 ,x_0} (x_0) ^{-1} D}{D} = 4^{-1} \Delta_w$ (using the notation that $D = i^{-1}\nabla$). Let $\mathfrak{g} := \mathfrak{g}_{x_0}$, and note that $\mathfrak{g}$ vanishes to third order at $x_0$. Then we compute that
\begin{align}
L_1 u = \frac{1}{8} \Delta _w u + c_2 ((\nabla_w)^2 \mathfrak{g})u + c_3 (\nabla _w (\Delta _w \mathfrak{g}) )\cdot \nabla _wu + c_4 ((\Delta_w)^3 (\mathfrak{g}^2) u) \label{eq:nasty}
\end{align}
for some constants $c_2,c_3,c_4$. Observe that $(\nabla _w (\Delta _w \mathfrak{g}) )$ evaluated at $x_0$ will be a linear combination of first derivatives of the entries of $H(x)$, which are all zero because we are using normal coordinates. Therefore \eqref{eq:nasty}, evaluated at $x_0$, reduces to:
\begin{align}
L_1 u &= \left ( \frac{1}{8} \Delta _w+ A \right )u = \left ( \frac{1}{2} \nabla _z \cdot \nabla_{\bar z} + A \right )u \label{eq:1222}
\end{align}
for some constant $A$, and using the complex variable $z = w_1 +iw_2$.

The operators $C_j$ along the diagonal can be recovered from $L_j$. Indeed, by matching powers of $N$ and using that $\dd \mu =\omega^\land d / d!= \det (H) 2^d \dd m(w)$ (see \cite[Lemma 2.6.2]{Floch}), we see, by \eqref{eq:1213},
\begin{align}
C_j [u] & = L_j [u \det H]. \label{eq:C_j L_j}
\end{align}

The constant $A$ can be computed by recalling that if $f=1$, the Toeplitz operator is just the Bergman projector. So letting $f = 1$ in \eqref{eq.identity}, we get that $C_0 [2b_1 ] + C_1 [1] = b_1$, which can be rearranged as $b_1 = -C_1 (1)$. Then since $\det H(x_0) = 1$, and using \eqref{eq:C_j L_j} and \eqref{eq:1222}, we get
\begin{align}
b_1 = - A - 2^{-1}\ip{\nabla _z}{\nabla _{\bar z }} \det H(z). \label{eq:A def}
\end{align}

\end{proof}

Because we are using normal coordinates, $\nabla \det H = 0$, therefore using \eqref{eq:A def},
\begin{align}
C_1[u] &= ( 2^{-1}\ip{ \nabla _z}{\nabla _{\bar z }} + A ) (u \det H ) \\
&= Au + 2^{-1} \ip{ \nabla _z}{\nabla _{\bar z }} u + 2^{-1} (\ip{ \nabla _z}{\nabla _{\bar z }} \det (H) ) u \\
&= 2^{-1}\ip{\nabla _z}{\nabla _{\bar z }} u - b_1 u.
\end{align}
Then \eqref{eq.final} becomes, after canceling all $fgb_1$'s,
\begin{align}
2^{-1 } (\ip{ \nabla _z}{\nabla _{\bar z }} (f_0 (x,\cdot ) g_0(\cdot ,\bar x) - f(\cdot )g(\cdot )) - g \ip{ \nabla _z}{\nabla _{\bar z }}f -f \ip{\nabla _z}{\nabla _{\bar z }} g )\label{God}
\end{align}
Now note that $f_0$ and $g_0$ are almost holomorphic in the first argument, and almost anti-holomorphic in the second coordinate. They can be treated as holomorphic and anti-holomorphic as we are on the diagonal. So \eqref{God} becomes, after applying the product rule and canceling terms:
\begin{align}
-2^{-1}\ip{\nabla_z f}{\nabla_{\bar z } g}. \label{eq:last}
\end{align}
Finally, if we use arbitrary holomorphic coordinates $x$, and let $J = Dx/Dz$ be the Jacobian relating the $x$ coordinates to the normal coordinates, then \eqref{eq:last} is
\begin{align}
- 2^{-1}\ip{J^t \nabla _x f }{ (\bar J )^t\nabla _{\bar x}g} = - 2^{-1}\ip{\bar J J^t \nabla_x f}{\nabla _{\bar x}g }. \label{eq:fuq}
\end{align}
Because we used normal coordinates, $J$ must satisfy
\begin{align}
2I = J^t (\p_x \dbar _x \phi) \bar J = J^t(H )\bar J,
\end{align}
so that $J^T = 2\bar J^{-1} H^{-1}$, so that \eqref{eq:fuq} becomes:
\begin{align}
-\ip{H^{-1} \nabla _ x f }{\nabla_{\bar x } g}.
\end{align}
Then, because $H = \p \dbar \phi$, we get our theorem. \end{proof}

\section{Simple Example Worked Out}\label{section:AppendixB}
A simple (although not compact) K\"{a}hler manifold is $\C$ with symplectic form $\omega = i dz \land d \bar z$. 
Considering holomorphic sections of powers of the trivial line bundle, the quantum space for each $N$ can be identified with all holomorphic functions $f$ such that 
\begin{align}
\int_\C |f|^2 e^{-N|z|^2} dm(z) < \infty.
\end{align}
This space is called the Bargmann space with $L^2$ structure
\begin{align}
\ip{f}{g} : =2 \int_\C f (z) \bar g(z) e^{-N|z|^2 } \dd m(z).
\end{align}
{There is extensive literature on this construction, see for instance \cite[\S 1.6]{folland1989harmonic} and \cite[Chapter 13]{Zworski} among others.}

In this case, the Bergman kernel is
\begin{align}
\Pi_N (x,\bar y) = \frac{N}{2\pi} \exp( Nx \bar y ).
\end{align}
The K\"ahler potential is $\phi(y) = |y|^2 $ with analytic extension $\psi(x,\bar y) = x \bar y $ (see for example \cite[Example 7.2.2]{Floch}). So if $f\in S_\delta(1)$ for a fixed $\delta \in [0,1/2)$ , the kernel of the Toeplitz operator $T_{N,f}$ is
\begin{align}
T_{N,f} (x,\bar y) = \left ( \frac{N}{2\pi}\right )^2 \int_\C f(w) \exp(N ( x\bar w - |w|^2 + w\bar y  ) )2 \dd m(w) .
\end{align}
We write this as an integral over $\R^2$ by letting $w = w_1 + i w_2$. Completing the square of the phase, this integral is:
\begin{align}
\left( \frac{N}{2\pi} \right )^2 e^{Nx \bar y  }\int_{\R^2} e^{N( -(w_1 - a)^2 -(w_2 - b)^2)} f(w_1+iw_2) 2 \dd w_1\dd w_2\label{eq:1624}
\end{align}
with $a = 2^{-1}(x+ \bar y) $ and $b = (2i )^{-1} (x - \bar y)$, which is approximately true for quantizable K\"ahler manifolds\footnote{In the general case, this $(a,b)$ is ${p_{\rm{crit}}} (t)$. Much of the trouble with the method of complex stationary phase is that the phase is not holomorphic, and therefore the extension is not unique, and so the critical point is no longer unique. However, when the phase is not holomorphic, the critical point still approximately takes this form.}. Note by the Gaussian decay in the integrand of \eqref{eq:1624} it suffices to assume $f$ is compactly supported as anything away from $a$ or $b$ will be exponentially small in $N$.

 We may now integrate \eqref{eq:1624} as an iterated integral. Let's first integrate over $w_1$. For $R$ sufficiently large, let $a = a_1 + i a_2$, and rewrite the inner integral in \eqref{eq:1624} as 
\begin{align}
\int _{-R}^R e ^{-N (w_1 -ia_2 )^2} f(w_1+ a_1 + i w_2) \dd w_1. \label{eq:1138}
\end{align}
Let $f_\R (x,y) = f(x+ i y)$, so the integrand has the term $f_\R(w_1 + a_1 , w_2)$. By Stokes' Theorem, \eqref{eq:1138} is
\begin{align}
\int_{-R}^R e^{-N w_1^2 } \tilde f_\R (w_1 + i a_2 + a_1 , w_2) \dd w_1 + \iint_{\Omega_a} e^{-N( z - ia_2)^2} \dbar _z \tilde f_\R(z+ a_1 ,w_2) \dd z\land \dd \bar z 
\end{align}
where $\tilde f_\R $ is an extension of $f_\R$ to $\C^2$, and $\Omega_a = \set{x + iy : x\in [-R,R] , y\in [0,a_2]}$. Ignoring the second term for the moment, we now integrate the first term over $w_2$, by the same reasoning (possibly increasing $R$), we get
\begin{align} \label{eq:1145}
\int_{-R}^R \int_{-R}^R e ^{-N(w_1^2 + w_2^2)} & \tilde f_\R(w_1 + a , w_2 + b) \dd w_1 \dd w_2\\
& + \int_{-R}^R \iint_{\Omega_b} e^{-N(w_1^2 + (z - ib_2)) } \dbar _z \tilde f_\R(w_1 + a , z+b) (\dd z\land \dd \bar z) \dd w_1. 
\end{align}
The first term in \eqref{eq:1145} is estimated using the method of steepest descent (for example see \cite[Exercise 2.4]{Grigis}), as
\begin{align}
\left ( \frac{\pi}{N} \right ) \sum _{k=0}^{M-1} \frac{N^{-k}}{ 4^kk!} \Delta^k \tilde f_\R (a ,b) + S_M (f ,N),
\end{align}
with
\begin{align}
|S_M (f,N) | \le C_{N} N^{-M - 1} \sum _{|\alpha | = 2M} \sup | \p^\alpha \tilde f_\R |.
\end{align}

Here $\Delta \tilde f_\R(x,y):= (\p_{\Re x } ^2 + \p_{\Re y}^2)(\tilde f_\R(x,y))$. If we compute the kernel on the diagonal, $x = y$, then all derivatives are tangential to the totally real submanifold which $\tilde f_\R$ is extended from and we evaluate the derivatives at $(\Re x, \Im x )$. So when $x = y$, the first term in \eqref{eq:1145} is
\begin{align}
&\left ( \frac{\pi}{2N}\right ) \sum _{k=0}^M \frac{N^{-k}}{ 4^k k!} (\p^2_{u} + \p^2_v )^k (f(u+ i v) )\Big|_{\substack{u = \Re{x} \\ v = \Im{x}}} + S_M(f,N) \label{eq:1163}\\
= &\left ( \frac{\pi}{2N}\right ) \sum _{k=0}^M \frac{N^{-k}}{ k!} (\p \dbar)^k f(x) + S_M(f,N).
\end{align}

\subsection{Controlling Error Terms}

Next we show that the error terms
\begin{align}
I_1 &: = \left ( \frac{N}{2\pi} \right )^{2} e^{N x\bar y } \int _\R e^{-N(w_2 -b)^2} \iint_{\Omega_a} e^{-N (z - ia_2) ^2} \dbar _z \tilde f_\R(z+a_1,w_2 ) \dd z \land \dd \bar z \dd w_2, \\
I_2 & := \left ( \frac{N}{2\pi}\right )^2 e^{N x \bar y } \int_\R \iint _{\Omega_b} e^{-N (w_1^2 + (z - ib_2 ))}\dbar _z \tilde f_\R(w_1 + a, z + b) \dd z\land \dd \bar z  \dd w_1,
\end{align}
are $\exp(-\frac{N}{2} (|x|^2 + |y|^2))\mathcal{O}(N^{-\infty})$. First note that $2\Re {x \bar y  } = -|x -y|^2 + |x|^2 + |y|^2$. Let $\e = x - y$, so that $a = \Re x - \bar \e /2$, $b = \Im x - \bar \e / (2i)$, $a_2 = \Im \e/2 $, and $b_2 = \Re \e /2$. Therefore, $|I_1|$ is bounded by
	\begin{align}	
\left ( \frac{N}{2\pi}\right )^2 e^{ \frac{N}{2} (|x|^2 + |y|^2) } \Big| \int_\R e^{-N(w_2 - b)^2 }\int _{-R}^R \int _{0}^{ \frac{\Im \e}{2}} e^{-N(z_1+ iz_2 - i \Im \e /2 )^2 - N |\e|^2/2)}  \\	
\cdot \dbar _z \tilde f _\R (z+ a_1 , w_2 ) \dd z_1 \dd z_2 \dd w_2 \Big|.	
\end{align}
We then apply $\dbar$ estimates for $\tilde f_\R$. That is for each $M \in \N$, there exists $C_M > 0$ so that $\dbar _z\tilde f_\R (a,b) \le C_M N^{\delta M_0} (\abs{\Im a } + \abs{\Im b })^M $. Fixing, $M$, the inner integral is bounded by
\begin{align}
C_M \int_0^{\frac{\Im \e}{2}} e^{N(z_2 - \Im \e / 2) ^2 -N |\e|^2/2 } N^{\delta M_0 }z_2^M \dd z_2 \label{eq:1183}
\end{align}
Expanding the exponential, we see that
\begin{align}
\eqref{eq:1183} & \le C_MN^{\delta M_0} e^{-\frac{N}{4} \Im \e ^2} \int_0^{\frac{\Im \e}{2}} \exp\left ( Nz_2^2 - z \Im\e N \right ) \dd z_2\\
 &\le C_MN^{\delta M_0} e^{-\frac{N}{4} \Im \e ^2} \frac{\Im \e ^{M+1}}{N^{M+1}} \int_0^{\frac{N}{2}} t^M \exp \left ( t \Im \e ^2 \left ( \frac{t}{N} - 1 \right ) \right ) \dd t \\
&\le C_MN^{\delta M_0} e^{-\frac{N}{4} \Im \e ^2} \frac{\Im \e ^{M+1}}{N^{M+1}} \int_0^{\frac{N}{2}} t^M e^{-t\Im \e^2 / 2} \dd t\\
&\lesssim _M e^{-\frac{N}{4} \Im \e ^2} \frac{ N^{\delta M_0}}{N^{M+1} \Im{\e} ^{M+1}} \int_0^{\frac{\Im \e^2 N}{4}} e^{-t} t^M \dd t\\
&\lesssim_M N^{\delta M_0 - M - 1}.
\end{align}
Therefore:
\begin{align}
|I_1| &\lesssim _M N^{\delta M_0 - M - 1 +2} e^{\frac{N}{2} (|x|^2 + |y|^2) } \int_\R e^{-N (w_2 - b)^2 } \int_{-R}^R e^{-Nz_1^2 } \dd z_1 \dd w_2 \\
&\lesssim N^{\delta M_0 - M +1} e^{\frac{N}{2} (|x|^2 + |y|^2) },
\end{align}
so that $I_1 = e^{\frac{N}{2} (|x|^2 + |y|^2)} \mathcal{O} (N^{-\infty})$. An identical argument is used to show the same bound for $I_2$.

\subsection{Using the notation presented in Tr\`eves}\label{appendix:treves}

It is possibly instructive to see how the change of variables presented by Tr\`eves in \cite{Treves}, and used in Section \ref{section:composition}, applies to this simple example. Let's consider a symbol $f$ to quantize. Then, as in \eqref{one part rewritten},
\begin{align}
T_{N,f}(x,\bar y) &= \left ( \frac{N}{2 \pi} \right ) e^{\frac{N}{2} (|x|^2 + |y|^2 ) } \int_{\R^{2}} e^{N \Psi (p,t) } g(p,t) \dd p ,
\end{align}
with $x = (t_1 + i t_2), y = (t_3 + i t_4)$, $w = p_1 + i p_2$, and
\begin{align}
\Psi(p,t) &:= x\bar w - |w|^2 + w \bar y -\frac{1}{2} (  |x|^2 +|y|^2 ) \\
&= x\bar w - |w|^2 + w \bar y - \frac{1}{2} (|x|^2+|y|^2)\\
&= (t_1 + i t_2 ) (p_1 - i p_2) - p_1^2 - p_2^2 + (p_1 + i p_2 )(t_3 - i t_4) - \frac{1}{2} (t_1 ^2 + t_2^2 +t_3^2 +t_4^2),\\
g(p,t) &:= 2f(p_1 + i p_2).
\end{align}
The critical point for this phase when it is holomorphically extended (note $\Psi$ is real-analytic, so the extension is unique) is
\begin{align}
{p_{\rm{crit}}}(t) &= \left ( \frac{1}{2} (t_1 + i t_2 + t_3 - it_4 ) , \frac{1}{2i} (t_1 + i t_2 - t_3 + i t_4 ) \right ),
\end{align}
so that
\begin{align}
\tilde \Psi_{pp}({p_{\rm{crit}}}(t)) &= -2 \mat{1 & 0 \\0 & 1}. 
\end{align} 
In this case, we change variables, as in Lemma \ref{lemma.q_lemma},
\begin{align}
q(p,t) &= \sqrt{2} \mat{1 & 0 \\ 0 & 1} (p - {p_{\rm{crit}}}(t) )\\
&= \sqrt{2} \mat{p_1 - \frac{1}{2} (t_1 + i t_2 + t_3 - it _4 ) \\ p_2 - \frac{1}{2i} (t_1 + it_2 - t_3 + it_4 )} : \C^2 \times \R^4 \to \C^2.
\end{align}
Then the new contour is $\set{p \in \C^2 : q(p,t) = w\in \R^2}$. For each $w \in \R^2$, we see that
\begin{align}
w_1 &= \sqrt{2} \Re{ p_1} - \frac{1}{\sqrt{2}} (t_1 + t_3) , &&& w_2 & = \sqrt{2} \Re{ p_2} - \frac{1}{\sqrt{2}} (t_2 + t_4) ,\\
0 & = \Im { p_1} + \frac{1}{\sqrt{2}}(-t_2 + t_4), &&& 0 &= \Im {p_2} + \frac{1}{\sqrt{2}} (t_1 - t_3).
\end{align}
Therefore the new contour is 
\begin{align}
U_0 &= \set{ p(w) : = \mat{ \frac{1}{2} (t_1 + t_3) + \frac{w_1}{\sqrt{2}} - \frac{i}{2} (-t_2 + t_4) \\ \frac{1}{2} (t_2 +t_4) + \frac{w_2}{\sqrt{2}} -\frac{i}{2} (t_1 -t_3)} : (w_1 ,w_2 ) \in \R^2 }.
\end{align}
The real stationary phase is applied to the amplitude $g(p(w) ) \det (\frac{\p p}{\p w})$, which is, after replacing $t$ with its definition,
\begin{align}
 \frac{1}{2} \tilde g\left (\frac{1}{2 } (x+ \bar y) + \frac{w_1}{\sqrt{2}} , \frac{1}{2 i} (x - \bar y) + \frac{w_2}{\sqrt{2}}\right ).
\end{align}
So that, ignoring constants
\begin{align}
f_j (x,y) = (\p_{\Re {w_1}} ^2 + \p_{\Re {w_2}}^2 )^j \tilde g(w_1 , w_2) |_{\substack{w_1 = \frac{1}{2} (x + \bar y) \\ w_2 = \frac{1}{2 } (x - \bar y) }}.
\end{align}
Along the diagonal, this agrees with the computation in \eqref{eq:1163} (with constants which can be shown to be equal). This provides a (non-unique) asymptotic expansion of $T_{N,f}$ using complex stationary phase, and possibly sheds light on how this method works in general.

\subsection{Composition}

In this section we let $f,g\in C^\infty_0(\C; \C)$ and determine $(f \star g)_1$. This is a simpler version of Appendix \ref{Appendix:1}, however in this case, the symplectic form is constant and in the Bergman kernel expansion, $b_j = 0$ for $j \ge 1$. First, if $f$ is a symbol, then 
\begin{align}
T_{N,f} (x,\bar y) \sim \left ( \frac{N}{ 2\pi}\right ) e^{N x \bar y} \sum _{j= 0 }^\infty N^{-j} C_j (f) (x,\bar y)
\end{align}
with
\begin{align}
C_j [f] (x,\bar y ) = \frac{1}{4^k k! } (\p_{\Re {w _1} }^2 + \p_{\Re {w_2}} ^2 ) ^j \tilde f(w_1,w_2) |_{(w_1,w_2) = \tau(x,\bar y)}
\end{align}
where $\tau(x,\bar y) = 2 ^{-1} ( x+ \bar y , i^{-1} (x-\bar y) )$. Importantly, when $y = x $ this becomes:
\begin{align}
C_j [f(\cdot ) ] (x,\bar x ) &= \frac{1}{4^j j! } (\p_{\Re {w _1} }^2 + \p_{\Re{ w_2}} ^2 ) ^j \tilde f(w_1,w_2) |_{(x,y) = \tau(x,\bar x)}= \frac{1}{j!} (\p \bar \p)^j f(x).
\end{align}
Now we may write the first few terms of $(f \star g )$:
\begin{align}
(f \star g ) _0 (x ) = C_0 [f_0 (x,\cdot ) g_0 (\cdot , \bar x) ] (x,\bar x ) =f_0 (x,\bar x ) g_0 (x,\bar x ) = f(x)g(x) ,
\end{align}
	\begin{align}	
(f \star g ) _1 (x) &= C_0[f_1 (x,\cdot )g_0 (\cdot, \bar x) ] (x,\bar x) + C_0 [f_0 (x,\cdot ) g_1 (\cdot ,\bar x) ] (x,\bar x)  \\	
&\qquad \qquad \qquad  + C_1 [f_0 (x,\cdot )g_0 (\cdot , \bar x) ] (x,\bar x) - C_1 [h_0 (\cdot ) ] (x,\bar x) \\	
&= \p\dbar f(x) g(x) + f(x) \p \dbar g(x) + C_1 [f_0 (x,\cdot ) g_0 (\cdot ,\bar x) ] (x, \bar x) - \p \dbar (f(x) g(x) )\\	
&= C_1 [f_0 (x,\cdot ) g_0 (\cdot ,\bar x) ] (x,\bar x) - \p f(x) \dbar g(x) - \dbar f (x) \p g(x)	.
\end{align}
Note:
\begin{align}
 C_1 [f_0 (x,\cdot ) g_0 (\cdot ,\bar x) ] (x,\bar x) &= \p_w \dbar _w [\tilde f (\tau (x,\bar w ) \tilde g ( w , \bar x)]|_{w = x} \\
 &= \left ( \frac{1}{2} \p_1 \tilde f + \frac{i}{2} \p_2 \tilde f \right ) \left ( \frac{1}{2} \p_1 \tilde g -\frac{i}{2} \p_2 \tilde g \right ) \\
 &= \dbar f (x) \p g(x)
\end{align}
where $\p_i \tilde f $ is the holomorphic derivative of $\tilde f$ with respect to its $i$th component. Here we use that $f_0$ is almost anti-holomorphic in the second argument and $g_0$ is almost holomorphic in the first argument. The error terms are absorbed in the $\mathcal{O}(N^{-2})$ error. We therefore get:
\begin{align}
(f \star g ) _1 (x) = -\p f(x) \dbar g(x) .
\end{align}

\smallsection{Acknowledgments} 
The author is grateful to Maciej Zworski for the countless helpful discussions and to Alix Deleporte for providing significant feedback on an earlier draft.
The author is also grateful to an anonymous referee for numerous corrections and suggestions.
This paper is based upon work jointly supported by the National Science Foundation Graduate Research Fellowship under grant DGE-1650114 and by grant DMS-1901462.

\printbibliography

\end{document}